\DeclarePairedDelimiter\floor{\lfloor}{\rfloor}
\newenvironment{customthm}[1]
  {\innercustomthm}
  {\endinnercustomthm}
\newtheorem*{thm*}{Theorem}
\newtheorem{thm}{Theorem}
\newtheorem{lem}[thm]{Lemma}
\newtheorem{pro}[thm]{Proposition}
\newtheorem{cor}[thm]{Corollary}
\newtheorem{ques}[thm]{Question}
\newcommand{\N}{\mathbb{N}}
\newcommand{\col}{\mathrm{col}}
\newenvironment{graph}[1][scale=1]{
\begin{tikzpicture}[#1]
\tikzstyle{vertex}=[circle, draw, fill, inner sep=0pt, minimum size=4pt]%
\tikzstyle{every path}=[line width=0.5pt]%
\tikzstyle{G}=[dashed]%
\tikzstyle{F}=[solid]
}{\end{tikzpicture}}
\begin{document}
\title{DP-Coloring Cartesian Products of Graphs}
\author{Hemanshu Kaul$^1$,
Jeffrey A. Mudrock$^2$,
Gunjan Sharma$^1$,
Quinn Stratton$^1$
}

\footnotetext[1]{Department of Applied Mathematics, Illinois Institute of Technology, Chicago, IL 60616. E-mail: {\tt kaul@iit.edu},  {\tt gsharma7@hawk.iit.edu}, {\tt qstratto@hawk.iit.edu} }

\footnotetext[2]{Department of Mathematics, College of Lake County, Grayslake, IL 60030. E-mail: {\tt jmudrock@clcillinois.edu}}

\date{}

\maketitle 
\begin{abstract}
DP-coloring (also called correspondence coloring) is a generalization of list coloring introduced by  Dvo\v{r}\'{a}k and Postle in 2015. Motivated by results related to list coloring Cartesian products of graphs, we initiate the study of the DP-chromatic number, $\chi_{DP}$, of the same.
We show that $\chi_{DP}(G \square H) \leq \text{min}\{\chi_{DP}(G) + \text{col}(H), \chi_{DP}(H) + \text{col}(G) \} - 1$ where $\text{col}(H)$ is the coloring number of the graph $H$. We focus on building tools for lower bound arguments for  $\chi_{DP}(G \square H)$ and use them to show the sharpness of the bound above and its various forms. Our results illustrate that the DP color function of $G$, the DP analogue of the chromatic polynomial, is essential in the study of the DP-chromatic number of the Cartesian product of graphs, including the following question that extends the sharpness problem above and the classical result on gap between list chromatic number and chromatic number: given any graph $G$ and $k \in \mathbb{N}$, what is the smallest $t$ for which $\chi_{DP}(G \square K_{k,t})= \chi_{DP}(G) + k$? 

\medskip

\noindent {\bf Keywords.}  graph coloring, DP-coloring, correspondence coloring, Cartesian product, DP color function, DP-chromatic number.

\noindent \textbf{Mathematics Subject Classification.} 05C15, 05C30, 05C69, 05D40.

 \end{abstract}
 
 \section{Introduction} \label{intro}
 
 In this paper all graphs are nonempty, finite and simple unless otherwise noted.  Generally speaking we follow West~\cite{W01} for terminology and notation. We use $\mathbb{N}$ to denote the set of all natural numbers. For $k \in \mathbb{N}$, $[k]$ denotes the set $\{1,...,k\}$. For a graph $G$, $V(G)$ and $E(G)$ denote the vertex set and edge set of $G$ respectively. If $S \subseteq V(G)$, $G[S]$ denotes the subgraph of $G$ induced by $S$. For any $S_{1},S_{2} \subseteq V(G)$, $E_{G}(S_{1},S_{2})$ denotes the subset of $E(G[S_1 \cup S_2])$ with at least one end point in $S_{1}$ and at least one end point in $S_{2}$. The neighborhood of a vertex $v$ in $G$ is denoted by $N_{G}(v)$ or $N(v)$ when the graph is clear from context. The neighborhood of a set of vertices $S\subseteq V(G)$ is defined as $N(S) = \bigcup_{v \in S}N_{G}(v)$. 
 We use $K_{m,n}$ to denote the complete bipartite graphs with partite sets of size $m$ and $n$.

 \subsection{Graph coloring, list coloring, and DP-coloring}
 A \emph{proper $m$-coloring} of a graph $G$ is a function $f$ that assigns an element $f(v) \in [m]$ to each $v \in V(G)$ such that $f(v) \neq f(u)$ whenever $uv \in E(G)$. We say that $G$ is \emph{$m$-colorable} if it has a proper $m$-coloring. The \emph{chromatic number} $\chi(G)$ of $G$ is the smallest $m \in \mathbb{N}$ such that there exists a proper $m$-coloring of $G$. The \emph{coloring number} of a graph $G$, $\col(G)$, is the smallest integer $d$ for which there exists an ordering, $v_1, \ldots, v_n$, of the vertices of $G$ such that each vertex $v_i$ has at most $d-1$ neighbors among $v_1, \ldots, v_{i-1}$. Clearly, $\col(K_{k,t}) = k+1$ when $k \le t$, and $\chi(G) \leq \col(G)$ for any graph $G$.
 
 List coloring is a generalization of classical vertex coloring.  It was introduced in the 1970s independently by Vizing~\cite{V76} and Erd\H{o}s, Rubin, and Taylor~\cite{ET79}. A \emph{list assignment} of $G$ is a function $L$ on $V(G)$ that assigns a set of colors to each $v \in V(G)$. If $|L(v)| = m$ for each $v \in V(G)$, then $L$ is called an \emph{$m$-assignment} of $G$. The graph $G$ is \emph{$L$-colorable} if there exists a proper coloring $f$ of $G$ such that $f(v) \in L(v)$ for each $v \in V(G)$ (we refer to $f$ as a \emph{proper $L$-coloring} of $G$). The \emph{list-chromatic number} $\chi_{\ell}(G)$ is the smallest $m$ such that there exists a proper $L$-coloring for every $m$-assignment $L$ of $G$. It immediately follows that for any graph $G$, $\chi(G) \leq \chi_\ell(G) \leq \col(G)$. The first inequality may be strict since it is known that the gap between $\chi(G)$ and $\chi_{\ell}(G)$ can be arbitrarily large; for example, $\chi_{\ell}(K_{k,t}) = k+1$ when $t \ge k^k$ but all bipartite graphs are $2$-colorable.
 
DP-coloring is a generalization of list coloring that was introduced by Dvo\v{r}\'{a}k and Postle~\cite{DP15} in 2015. Intuitively, DP-coloring is a generalization of list coloring where each vertex in the graph still gets a list of colors but identification of which colors are different can vary from edge to edge. We now state the formal definition. A \emph{cover} of a graph $G$ is a pair $\mathcal{H} = (L,H)$, where $H$ is a graph and $L$ is function $L : V(G) \rightarrow \mathcal{P}(V(H))$ such that:

\vspace{3mm}

     \noindent(1) the set $\{L(v) : v \in V(G)\}$ forms a partition of $V(H)$ of size $|V(G)|$;\\
     (2) for every $v \in V(G)$, the graph $H[L(v)]$ is a complete graph;\\
     (3) if $E_{H}(L(u),L(v))$ is nonempty, then either $u = v$ or $uv \in E(G)$;\\
     (4) if $uv \in E(G)$, then $E_{H}(L(u),L(v))$ is a matching (the matching may be empty).
     
\vspace{3mm}     
 
We refer to the edges of $H$ connecting distinct parts of the partition $\{L(v) : v \in V(G) \}$ as \emph{cross edges}.  A cover $\mathcal{H} = (L,H)$ of $G$ is \emph{$m$-fold} if $|L(v)| = m$ for each $v \in V(G)$. An $\mathcal{H}$ -coloring of $G$ is an independent set in $H$ of size $|V(G)|$. An independent set $I \subseteq V (H)$ is an $\mathcal{H}$-coloring of $G$ if and only if $|I \cap L(v)| = 1$ for all $v \in V (G)$. If an $\mathcal{H}$-coloring of $G$ exists, then we say that $G$ \emph{admits} an $\mathcal{H}$-coloring.
 The \emph{DP-chromatic number} $\chi_{DP}(G)$ is the smallest $m \in \mathbb{N}$ such that $G$ admits an $\mathcal{H}$-coloring for every $m$-fold cover $\mathcal{H}$ of $G$.
 
 Suppose $\mathcal{H} = (L,H)$ is an $m$-fold cover of $G$ and $U \subseteq V(G)$. Let $\mathcal{H}_{U} = (L_{U},H_{U})$ where $L_{U}$ is the restriction of $L$ to $U$ and $H_{U} = H[\bigcup_{u \in U} L(u)]$. Clearly, $\mathcal{H}_{U}$ is an $m$-fold cover of $G[U]$. We call $\mathcal{H}_{U}$ the \emph{subcover of }$\mathcal{H}$\emph{ induced by }$U$. When $U = \{u\}$, we use the notation $\mathcal{H}_{u}$ instead of $\mathcal{H}_{\{u\}}$. A cover $\mathcal{H} = (L,H)$ of a graph $G$ is called a \emph{full cover} if for each $uv \in E(G)$, the matching $E_{H}(L(u),L(v))$ is perfect. We say $\mathcal{H}$ is a \emph{bad} cover of $G$ if $G$ does not admit an $\mathcal{H}$-coloring.
 
 Given an $m$-assignment, $L$, for a graph $G$, it is easy to construct an $m$-fold cover $\mathcal{H}$ of $G$ such that $G$ has an $\mathcal{H}$-coloring if and only if $G$ has a proper $L$-coloring (see~\cite{BK17}).  It follows that $\chi_\ell(G) \leq \chi_{DP}(G)$.  This inequality may be strict since it is easy to prove that $\chi_{DP}(C_n) = 3$ whenever $n \geq 3$, but the list chromatic number of any even cycle is $2$ (see~\cite{BK17} and~\cite{ET79}).
 
\subsection{Chromatic Polynomial, List Color Function, and DP Color Function} \label{DPCF}
 
For $m \in \N$, the \emph{chromatic polynomial} of a graph $G$, $P(G,m)$, is equal to the number of proper $m$-colorings of $G$. It can be shown that $P(G,m)$ is a polynomial in $m$ of degree $|V(G)|$ (see~\cite{B12}).

In 1990 the notion of chromatic polynomial was extended to list coloring as follows~\cite{AS90}. If $L$ is a list assignment for $G$, we use $P(G,L)$ to denote the number of proper $L$-colorings of $G$. The \emph{list color function} $P_\ell(G,m)$ is the minimum value of $P(G,L)$ where the minimum is taken over all possible $m$-assignments $L$ for $G$.  Since an $m$-assignment could assign the same $m$ colors to every vertex in a graph, it is clear that $P_\ell(G,m) \leq P(G,m)$ for each $m \in \N$. It is known that for each $m \in \N$, $P(G,m)=P_{\ell}(G,m)$ when $G$ is a cycle or chordal\footnote{A chordal graph is a graph in which all cycles of length four or more contain a chord.} (see~\cite{KN16} and~\cite{AS90}). But for some graphs, the list color function can differ significantly from the chromatic polynomial for small values of $m$.  One reason for this is that a graph can have a list chromatic number that is much higher than its chromatic number. On the other hand, Wang, Qian, and Yan~\cite{WQ17} (improving upon results in~\cite{D92} and~\cite{T09}) showed that for a connected graph $G$ with $t$ edges, $P_{\ell}(G,m)=P(G,m)$ whenever $m > {(t-1)}/{\ln(1+ \sqrt{2})}$.

Recently, the notion of chromatic polynomial was extended to DP-coloring~\cite{KM20}.  Suppose $\mathcal{H} = (L,H)$ is a cover of graph $G$.  Let $P_{DP}(G, \mathcal{H})$ be the number of $\mathcal{H}$-colorings of $G$.  Then, the \emph{DP color function of $G$}, denoted $P_{DP}(G,m)$, is the minimum value of $P_{DP}(G, \mathcal{H})$ where the minimum is taken over all possible $m$-fold covers $\mathcal{H}$ of $G$.  It is easy to see that for any graph $G$ and $m \in \N$, $P_{DP}(G, m) \leq P_\ell(G,m) \leq P(G,m)$.  It is also fairly straightforward to prove that for each $n \geq 3$ and $m \in \N$, $P_{DP}(C_n,m) = P(C_n,m)$ when $n$ is odd and $P_{DP}(C_n,m) = (m-1)^n - 1$ when $n$ is even and $m \geq 2$ (see~\cite{KM20} or~\cite{M21}).

As the DP color function of an even cycle demonstrates, unlike the list color function, the DP color function need not be equal to the chromatic polynomial even for sufficiently large values of $m$.  In fact, Dong and Yang~\cite{DY21} recently (extending results of~\cite{KM20}) showed that if $G$ contains an edge $e$ such that the length of a shortest cycle containing $e$ in $G$ is even, then there exists $N \in \mathbb{N}$ such that $P_{DP}(G,m) < P(G,m)$ whenever $m \geq N$. In general, it was shown in~\cite{MT20} that  for every $n$-vertex graph $G$, $P(G, m)-P_{DP}(G, m) = O(m^{n-3})$ as $m \rightarrow \infty$; it follows that for any graph $G$ whose $P_{DP}(G,m)$ is a polynomial in $m$ for large enough $m$, the polynomial will have the same three terms of highest degree as $P(G, m)$.

 \subsection{List Coloring Cartesian Products of Graphs}

The \emph{Cartesian product} of graphs $G$ and $H$, denoted $G \square H$, is the graph with vertex set $V(G) \times V(H)$ and edges created so that $(u,v)$ is adjacent to $(u',v')$ if and only if either $u=u'$ and $vv' \in E(H)$ or $v=v'$ and $uu' \in E(G)$. Every connected graph has a unique factorization under this graph product (\cite{S60}), and this factorization can be found in linear time and space (\cite{IP07}).

It is well-known that $\chi(G \square H) = \max \{\chi(G), \chi(H) \}$.  For the list-chromatic number, Borowiecki, Jendrol, Kr{\'a}l, and Mi{\v s}kuf~\cite{BJ06} showed the following in 2006.

\begin{thm}[\cite{BJ06}] \label{thm: Borow1}
For any graphs $G$ and $H$, $\chi_\ell(G \square H) \leq \min \{\chi_\ell(G) + \col(H), \col(G) + \chi_\ell(H) \} - 1.$
\end{thm}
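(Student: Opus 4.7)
The plan is to prove the inequality $\chi_\ell(G \square H) \leq \chi_\ell(G) + \col(H) - 1$ directly; the other half of the minimum then follows by symmetry since $G \square H \cong H \square G$. Set $k = \chi_\ell(G)$ and $d = \col(H)$, and let $L$ be an arbitrary $(k+d-1)$-assignment of $G \square H$; the goal is to produce a proper $L$-coloring. Fix an ordering $v_1, \ldots, v_n$ of $V(H)$ witnessing $\col(H) = d$, so that each $v_i$ has at most $d-1$ earlier neighbors in $H$. For each $i$, write $G_i$ for the induced subgraph of $G \square H$ on $V(G) \times \{v_i\}$, which is isomorphic to $G$; the edges of $G \square H$ either lie inside some $G_i$ or join $(u, v_i)$ to $(u, v_j)$ with $v_iv_j \in E(H)$.

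I would color the $G_i$ one at a time in the order $i = 1, 2, \ldots, n$, maintaining the invariant that $G_1, \ldots, G_{i-1}$ have already been properly $L$-colored by a map $f$. At step $i$, for each $u \in V(G)$ define
\[ L_i(u) = L((u,v_i)) \setminus \{ f((u,v_j)) : v_j \in N_H(v_i),\; j < i \}. \]
At most $d-1$ colors are removed, so $|L_i(u)| \geq k$; choosing a $k$-subset of each $L_i(u)$ and invoking $\chi_\ell(G) \leq k$ on $G_i$ yields a proper $L_i$-coloring of $G_i$. By construction of $L_i$, this extension agrees with $f$ across every cross-copy edge $(u,v_i)(u,v_j)$ with $j < i$, and no other edges of $G \square H$ leave $G_i$ toward colored vertices. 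Iterating until $i = n$ finishes a proper $L$-coloring.

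There is no genuine obstacle here; the main design point is why $\col(H)$ rather than $\chi_\ell(H)$ appears in the bound. It is $\col(H)$ that supplies the \emph{uniform} per-vertex cap of $d-1$ on the number of already-colored $H$-neighbors of $v_i$, which in turn uniformly bounds the number of colors forbidden at each $u \in V(G)$ in the shrunken list $L_i(u)$; a parameter like $\chi_\ell(H)$ controls the global list-chromatic structure of $H$, but cannot drive the local list-shrinkage argument that feeds into $\chi_\ell(G)$ on each copy.
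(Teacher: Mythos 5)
Your proof is correct and uses essentially the same approach as the paper: order $V(H)$ by the coloring-number ordering, color the $G$-layers one at a time, and use the uniform per-vertex cap of $\col(H)-1$ earlier neighbors to shrink each list to size $\chi_\ell(G)$. The paper phrases it as formal induction on $|V(H)|$ (and then transports the argument verbatim to DP-coloring in Theorem~\ref{thm: cartprod}), but your iterative greedy presentation is the same argument.
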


  For any graph $G$, it is easy to see that Theorem~\ref{thm: Borow1} implies $\chi_\ell(G \square K_{k,t}) \leq \chi_\ell(G) + k$.  The following result by Kaul and Mudrock~\cite{KM18} demonstrates the sharpness of Theorem~\ref{thm: Borow1}.

\begin{thm} [\cite{KM18}] \label{thm: KM18}
Let $G$ be any graph.  Then, $\chi_\ell(G \square K_{k,t}) = \chi_\ell(G) + k$ whenever $t \geq (P_\ell(G, \chi_\ell(G) + k - 1))^k$.
\end{thm}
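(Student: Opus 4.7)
The upper bound $\chi_\ell(G \square K_{k,t}) \leq \chi_\ell(G) + k$ is immediate from Theorem~\ref{thm: Borow1}, since $\col(K_{k,t}) = k+1$ for $k \leq t$. The substance of the theorem is the matching lower bound, which I will establish by producing an explicit bad $m$-assignment of $G \square K_{k,t}$, where $m := s + k - 1$ and $s := \chi_\ell(G)$. The guiding idea is to use the $k$ copies of $G$ sitting over the small partite set $A = \{a_1, \ldots, a_k\}$ of $K_{k,t}$ to ``enumerate'' all $k$-tuples of proper colorings of $G$ coming from a list assignment that realizes $P_\ell(G,m)$, and then to use the copies of $G$ over $B = \{b_1, \ldots, b_t\}$ as ``obstructors'': each tuple will be matched to a distinct $b_j$ whose list constraints force the $b_j$-copy to be colored from a pre-chosen bad $(s-1)$-assignment of $G$.

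To execute this, I first pick a bad $(s-1)$-assignment $L_0'$ of $G$ (which exists since $\chi_\ell(G) = s$) and an $m$-assignment $L^*$ of $G$ achieving $P(G, L^*) = P_\ell(G, m) =: T$. By renaming colors I produce $k$ copies $L_1, \ldots, L_k$ of $L^*$ whose color sets are pairwise disjoint and disjoint from the colors appearing in $L_0'$; each $L_i$ still satisfies $P(G, L_i) = T$. Let $\phi_i^{(1)}, \ldots, \phi_i^{(T)}$ enumerate the proper $L_i$-colorings of $G$. Because $t \geq T^k$, I can identify $T^k$ distinct vertices of $B$ with tuples $(j_1, \ldots, j_k) \in [T]^k$. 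Now define $L$ on $G \square K_{k,t}$ by $L(v, a_i) := L_i(v)$, by $L(v, b_{(j_1, \ldots, j_k)}) := L_0'(v) \cup \{\phi_i^{(j_i)}(v) : i \in [k]\}$ for each tuple, and by assigning any $m$-set to the remaining $b$-vertices (these play no role).

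Assume toward contradiction that $\psi$ is a proper $L$-coloring of $G \square K_{k,t}$. For each $i$, the restriction $\psi(\cdot, a_i)$ is a proper $L_i$-coloring of $G$, hence coincides with some $\phi_i^{(j_i)}$. Writing $b := b_{(j_1, \ldots, j_k)}$, the edges $(v, a_i)(v, b)$ force $\psi(v, b) \neq \phi_i^{(j_i)}(v)$ for every $i$, which combined with $\psi(v,b) \in L(v,b)$ yields $\psi(v, b) \in L_0'(v)$. Therefore $\psi(\cdot, b)$ is a proper $L_0'$-coloring of $G$, contradicting the badness of $L_0'$.

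The main obstacle I anticipate is arranging that each list $L(v, b_{(j_1, \ldots, j_k)})$ has the correct size $m$. This requires the $k$ values $\phi_1^{(j_1)}(v), \ldots, \phi_k^{(j_k)}(v)$ to be pairwise distinct and disjoint from $L_0'(v)$, which is exactly why I insist on pairwise disjoint color palettes for $L_1, \ldots, L_k, L_0'$ through relabeling. This disjointness is not cosmetic: it is also precisely what ensures that removing the $k$ colors $\psi(v, a_i)$ from $L(v, b)$ leaves only $L_0'(v)$, so that the induced coloring on the $b$-copy is a proper $L_0'$-coloring of $G$.
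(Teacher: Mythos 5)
Your argument is correct and follows the same strategy as the cited source, which is the exact list-coloring analogue of the paper's own proof of Theorem~\ref{thm: cartprodcompbipartite}: the disjointly relabeled copies $L_1,\dots,L_k$ of $L^*$ play the role of the $k$ copies of $\mathcal{H}_{G}$ over $V(G)\times X$, the tuple-indexed obstructor vertices $b_{(j_1,\dots,j_k)}$ play the role of the vertices $y_i$ for which the coloring $I_i$ is made volatile, and the bad $(s-1)$-assignment $L_0'$ plays the role of the bad $(m-1)$-fold cover $\mathcal{H}'_{G}$. Your remark that the disjoint palettes simultaneously pin down the list sizes on the $B$-side and force the surviving colors to lie in $L_0'(v)$ is precisely the list-coloring way of encoding what the DP proof handles via matchings.
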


This was an improvement on Borowiecki, Jendrol, Kr{\'a}l, and Mi{\v s}kuf's earlier result, $\chi_\ell(G \square K_{k,t}) = \chi_\ell(G) + k$ whenever $t \geq (\chi_\ell(G) + k - 1)^{k|V(G)|}$ which is a generalization of the classical result on the list chromatic number of a complete bipartite graph; $\chi_\ell(K_{k,t}) = k+1$ if and only if $t \geq k^k$ (see~\cite{BJ06}). A motivation for this paper was to extend Theorems~\ref{thm: Borow1} and~\ref{thm: KM18} to DP-coloring.
 
 \subsection{Outline of Results and Open Questions}
 
 In this section we present an outline of the paper while also mentioning some open questions. We begin Section~\ref{two} by proving the DP-analogue of Theorem~\ref{thm: Borow1}.
 
 \begin{thm} \label{thm: cartprod}
    For any graphs $G$ and $H$, $\chi_{DP}(G\square H) \leq \min\{\chi_{DP}(G) + \col(H),\chi_{DP}(H) + \col(G)\} - 1$.
\end{thm}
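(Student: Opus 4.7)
The plan is to exploit the symmetry of the statement and prove only $\chi_{DP}(G \square H) \leq \chi_{DP}(G) + \col(H) - 1$; swapping $G$ and $H$ yields the other side of the minimum. Set $k = \chi_{DP}(G)$ and $d = \col(H)$, fix an arbitrary $(k+d-1)$-fold cover $\mathcal{H} = (L, H')$ of $G \square H$, and order $V(H) = \{v_1, \ldots, v_n\}$ so that each $v_j$ has at most $d - 1$ earlier neighbors. Writing $S_j = V(G) \times \{v_j\}$, the sets $S_j$ partition $V(G \square H)$ and each $(G \square H)[S_j]$ is a copy of $G$. I would build the $\mathcal{H}$-coloring column by column in the order $S_1, \ldots, S_n$.

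Inductively, suppose an independent set $I_{j-1} \subseteq V(H')$ has been chosen that intersects every $L((u, v_i))$ with $u \in V(G)$ and $i < j$ in exactly one vertex. For each $u \in V(G)$, let $F(u) \subseteq L((u, v_j))$ be the set of vertices adjacent in $H'$ to some element of $I_{j-1}$, choose a subset $L^{*}(u) \subseteq L((u, v_j)) \setminus F(u)$ with $|L^{*}(u)| = k$, and let $H^{*} = H'[\bigcup_{u} L^{*}(u)]$. A routine check of properties (1)--(4) shows that $\mathcal{H}^{*} = (L^{*}, H^{*})$ is a $k$-fold cover of $(G \square H)[S_j] \cong G$: induced subgraphs of complete graphs remain complete, and subsets of matchings remain matchings. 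Since $k = \chi_{DP}(G)$, there exists an $\mathcal{H}^{*}$-coloring $J_j$, and $I_j := I_{j-1} \cup J_j$ is independent in $H'$ because $L^{*}(u) \cap F(u) = \emptyset$ for every $u$. Iterating to $j = n$ produces the desired $\mathcal{H}$-coloring.

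The only nontrivial estimate is $|F(u)| \leq d-1$. By cover property (3), any cross edge from $L((u, v_j))$ to a vertex of $I_{j-1}$ must terminate in $L((u', v_i))$ for some neighbor $(u', v_i)$ of $(u, v_j)$ in $G \square H$ with $i < j$; since $v_i \neq v_j$, the definition of $\square$ forces $u' = u$ and $v_i v_j \in E(H)$, and there are at most $d - 1$ such $v_i$. By cover property (4), each such neighbor column contributes at most one vertex to $F(u)$. The step that most genuinely uses the DP-coloring formalism (as opposed to just list coloring) is the verification that restricting each $L(v)$ to an arbitrary subset still yields a valid cover; this is what allows us to apply the DP-chromatic number hypothesis to the available-color cover of each column. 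Otherwise, this is the direct DP-analogue of the greedy column argument for Theorem~\ref{thm: Borow1}, and I do not anticipate a real obstacle.
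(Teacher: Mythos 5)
Your proposal is correct and is essentially the paper's proof: the paper phrases the argument as induction on $|V(H)|$, whereas you unroll the induction into a direct column-by-column greedy construction, but the key step (removing the at most $\col(H)-1$ forbidden vertices from each list in the new column, using cover properties (3) and (4) to bound them, and invoking $\chi_{DP}(G)$ on the resulting $k$-fold subcover of a $G$-copy) is identical.
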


We conclude Section~\ref{two} by showing that the bound in Theorem~\ref{thm: cartprod} is sharp. 

\begin{thm} \label{thm: cartprodcompbipartite}
	For any graph $G$, $\chi_{DP}(G\square K_{k,t}) = \chi_{DP}(G) + k$ whenever $t \geq (P_{DP}(G,\chi_{DP}(G)+k-1))^{k}$.
\end{thm}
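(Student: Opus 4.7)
The upper bound $\chi_{DP}(G \square K_{k,t}) \le \chi_{DP}(G) + k$ is immediate from Theorem~\ref{thm: cartprod} since $\col(K_{k,t}) = k+1$ (without loss of generality $k \le t$). For the matching lower bound, the plan is to exhibit an explicit bad $r$-fold cover $\mathcal{H}$ of $G \square K_{k,t}$, where $r := \chi_{DP}(G) + k - 1$. Set $s := \chi_{DP}(G)$ and $N := P_{DP}(G, r)$. Fix a bad $(s-1)$-fold cover $\mathcal{H}^- = (L^-, H^-)$ of $G$, which exists by definition of $\chi_{DP}$; an $r$-fold cover $\mathcal{H}^* = (L^*, H^*)$ of $G$ with exactly $N$ $\mathcal{H}^*$-colorings, which exists by definition of $P_{DP}$; and an auxiliary $r$-fold cover $\mathcal{H}^+$ of $G$ obtained from $\mathcal{H}^-$ by adjoining $k$ new ``sentinel'' vertices $Y_v = \{y_{v,1}, \ldots, y_{v,k}\}$ to each part $L^-(v)$ (and no further cross edges), so $|L^+(v)| = (s-1) + k = r$.

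Let $A = \{a_1, \ldots, a_k\}$ and $B = \{b_1, \ldots, b_t\}$ be the partite sets of $K_{k,t}$. Since $t \ge N^k$, the $N^k$ possible profiles $\rho_j = (\phi^{(j)}_1, \ldots, \phi^{(j)}_k)$ of $k$-tuples of $\mathcal{H}^*$-colorings of $G$, indexed by $j \in [N^k]$, can each be assigned to a distinct $b_j \in B$. Construct $\mathcal{H}$ on $G \square K_{k,t}$ by placing disjoint copies of $\mathcal{H}^*$ on each $G_{a_i}$ and disjoint copies of $\mathcal{H}^+$ on each $G_{b_j}$. For every $j \in [N^k]$, $v \in V(G)$, and $i \in [k]$, take the cross matching between $L((v, a_i))$ and $L((v, b_j))$ to consist of the single edge $\phi^{(j)}_i(v) \leftrightarrow y_{v,i}$; for $j > N^k$ take all such cross matchings empty. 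One checks that $\mathcal{H}$ is a valid $r$-fold cover of $G \square K_{k,t}$ since each prescribed matching between distinct parts is a single edge (hence a matching), the internal covers $\mathcal{H}^*$ and $\mathcal{H}^+$ are valid, and the non-adjacency conditions are respected.

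To see that $\mathcal{H}$ is bad, given any hypothetical $\mathcal{H}$-coloring $\Psi$, the restriction $\Psi|_{G_{a_i}}$ is an $\mathcal{H}^*$-coloring of $G$, so the tuple $(\Psi|_{G_{a_1}}, \ldots, \Psi|_{G_{a_k}})$ equals $\rho_m$ for some $m \in [N^k]$. On the copy $G_{b_m}$, for each $v \in V(G)$ and $i \in [k]$ the chosen vertex in $L((v, b_m))$ must avoid the unique $H$-neighbor $y_{v,i}$ of $\phi^{(m)}_i(v)$ coming from our cross matching; hence it must lie in $L^-(v)$. Because the internal cross edges of $\mathcal{H}^+$ restricted to the $L^-$-parts coincide with those of $\mathcal{H}^-$, the restriction $\Psi|_{G_{b_m}}$ is then an $\mathcal{H}^-$-coloring of $G$, contradicting the choice of $\mathcal{H}^-$. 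The main technical care is in the sentinel-layer design: the matching edges $\phi^{(j)}_i(v) \leftrightarrow y_{v,i}$ are arranged so that the profile $\rho_j$ ``uses up'' precisely its budget of $k$ blockers at each vertex $v$ on the $b_j$-side, peeling away the sentinels and exposing the bad cover $\mathcal{H}^-$ underneath. The exponent $k$ in the hypothesis $t \ge N^k$ is forced by needing one dedicated $b_j$ per profile, mirroring the role of $P_\ell(G, \chi_\ell(G) + k - 1)^k$ in the list-coloring statement of Theorem~\ref{thm: KM18}.
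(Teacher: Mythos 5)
Your proposal is correct and takes essentially the same approach as the paper: you build the identical bad cover (copies of a minimizing $r$-fold cover on the $k$-side, the bad $(\chi_{DP}(G)-1)$-fold cover padded with $k$ inert ``sentinel'' vertices per part on the $t$-side, and one dedicated $b_j$ per $k$-tuple of colorings with single-edge cross matchings routing the $i$-th coordinate of the profile to the $i$-th sentinel). The only cosmetic difference is that you verify badness directly from the construction, whereas the paper packages the same argument as an application of its volatile-coloring lemma (Lemma~\ref{lem: nohcoloring}), which it then reuses elsewhere.
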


To prove this, we define the notion of volatile coloring which gives a characterization of the bad covers of the Cartesian product of an arbitrary graph with a complete bipartite graph. Volatile coloring, and conditions derived from it, may be of independent interest in the study of the DP-chromatic number of other Cartesian products. In the remainder of the paper we will show how volatile colorings are a useful tool for lower bound arguments for the DP-chromatic number.

Building upon Theorem~\ref{thm: cartprodcompbipartite}, in the rest of the paper, we will show evidence that the DP color function is a useful tool in the study of the DP-chromatic number of the Cartesian product of graphs. Considering the sharpness of Theorem~\ref{thm: cartprodcompbipartite}, also inspires us to consider a more general question which is the focus of Sections~\ref{three} and~\ref{four}.

\begin{ques} \label{ques: 2}
Given a graph $G$ and $k \in \N$, let $f(G,k)$ be the function satisfying: $\chi_{DP}(G\square K_{k,t}) \allowbreak= \chi_{DP}(G)+k$ if and only if $t \geq f(G,k)$. What is $f(G,k)$?\footnote{Since $\chi_{DP}(G\square K_{k,0}) = \chi_{DP}(G) < \chi_{DP}(G) + k$, we have $f(G,k)\geq 1$. Moreover by Theorem~\ref{thm: cartprodcompbipartite}, $f(G,k) \leq (P_{DP}(G,\chi_{DP}(G)+k-1))^{k}$. Hence $f(G,k)$ exists for every $G$ and $k \in \N$.}
\end{ques}

Note that $f(G,k)$ is the smallest $t$ such that $\chi_{DP}(G\square K_{k,t}) = \chi_{DP}(G) + k$ (we use this notation for the remainder of the paper).

We begin Section~\ref{three} with definitions of canonical labeling and twisted-canonical labeling of covers which give a characterization of the bad covers of odd and even cycles respectively (see~\cite{KO19} and~\cite{LWW22} for more general definitions and results related to bad covers of graphs which imply our characterizations for cycles\footnote{We thank the referees for bringing these papers to our attention.}). These notions of labelings are of independent interest in the study of DP coloring (see e.g., ~\cite{CL1,CL2,MT20}). Using these tools along with volatile coloring, we end Section~\ref{three} by showing that Theorem~\ref{thm: cartprodcompbipartite} is sharp when $G$ is an even cycle and $k=1$; that is, for any $m \in \N$, $f(C_{2m+2},1) = P_{DP}(C_{2m+2},3) = 2^{2m+2}-1$.  This is the only sharpness example that we have found which makes us think that Theorem~\ref{thm: cartprodcompbipartite} might not be sharp very often. This motivates us to ask the following question.

\begin{ques} \label{ques: 3}
Does there exist a graph $G$ such that $f(G,k) = (P_{DP}(G,\chi_{DP}(G)+k-1))^{k}$ for every $k \in \N$?
\end{ques}

Questions~\ref{ques: 2} and~\ref{ques: 3} seem hard to answer completely since there is no $G$ for which we know $f(G,k)$ for every $k \in \N$. Even in the case when $G = K_{1}$, the form of this question which has been studied before,  $f(K_{1},k)$ is not known exactly. Mudrock~\cite{M18} showed that $k^{k}/k! < f(K_{1},k) \leq 1+(k^{k}/k!)(\log(k!)+1)$. This also demonstrates that Theorem~\ref{thm: cartprodcompbipartite} is not always sharp. Consider the case when $k=3$, $\chi_{DP}(K_{1}\square K_{3,t}) = 4$ whenever $t \geq 27$ by Theorem~\ref{thm: cartprodcompbipartite} (note that $\chi_{DP}(K_{1}) = 1$ and $P_{DP}(K_{1},k) = k$); whereas by Mudrock's result, $\chi_{DP}(K_{1}\square K_{3,t}) = 4$ whenever $t \geq 10$.

In Section~\ref{four}, we make progress towards Question~\ref{ques: 2} by improving the bound in Theorem~\ref{thm: cartprodcompbipartite} when $G$ is a cycle. Our arguments also illustrate the subtle difference in handling even cycles versus odd cycles in DP coloring, as also evident in results on the DP color function (see Section~\ref{DPCF}). 

By extending the proof ideas of Theorem~\ref{thm: cartprodcompbipartite} and using the tools from Section~\ref{three}, we construct random covers using a combination of random matchings defined using an equivalence relation on an appropriate set of colorings, and matchings defined using canonical and twisted-canonical labelings. We show that there exists an appropriate bad cover by counting the expected number of volatile colorings.

\begin{thm} \label{thm: oddrandom}
        Given $k \in \mathbb{N}$, let $c_k = \left \lceil{\frac{k\ln(k+2)}{\ln{2}+(k-1)\ln(k+2)-\ln(2(k+2)^{k-1}-(k+1)!)}} \right \rceil$ if $k \geq 2$ and $c_1=1$. Then,\\ $\chi_{DP}(C_{2m+1}\square \allowbreak K_{k,t}) = k+3$ whenever $t\geq c_k\left(\frac{P_{DP}(C_{2m+1},k + 2)}{k+2}\right)^k = c_k\left(\frac{(k+1)^{2m+1} - (k+1)}{k+2}\right)^k$.
    \end{thm}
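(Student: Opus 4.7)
The upper bound $\chi_{DP}(C_{2m+1}\square K_{k,t})\leq k+3$ follows immediately from Theorem~\ref{thm: cartprod} combined with $\chi_{DP}(C_{2m+1})=3$ and $\col(K_{k,t})=k+1$. For the matching lower bound, the plan is to build a bad $(k+2)$-fold cover $\mathcal{H}$ of $C_{2m+1}\square K_{k,t}$ probabilistically, extending the construction used in the proof of Theorem~\ref{thm: cartprodcompbipartite}. Write $A=\{a_1,\ldots,a_k\}$ and $B=\{b_1,\ldots,b_t\}$ for the partite sets of $K_{k,t}$. On each $a_j$-copy of $C_{2m+1}$, place a canonical labeling cover, so that the $\mathcal{H}$-colorings of that copy are exactly the $P_{DP}(C_{2m+1},k+2)=(k+1)^{2m+1}-(k+1)$ proper $(k+2)$-colorings of the cycle. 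On each $b_i$-copy, place a cover combining canonical and twisted-canonical labeling edges (following the characterizations of bad cycle covers from Section~\ref{three}), designed so that once enough colors are forbidden along the cycle, the induced reduced cover on the $b_i$-copy fails to admit a proper coloring via the usual parity obstruction on odd cycles.

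For each $b_i$, each $u\in V(C_{2m+1})$, and each $j\in[k]$, the matching between $L((u,a_j))$ and $L((u,b_i))$ will be drawn uniformly at random, independently across the $b_i$'s, from the set of shift matchings $c\mapsto c+r$ with $r\in\mathbb{Z}_{k+2}$. By the volatile-coloring characterization established in the proof of Theorem~\ref{thm: cartprodcompbipartite}, the constructed $\mathcal{H}$ is bad if and only if for every $k$-tuple $\tau=(\phi_{a_1},\ldots,\phi_{a_k})$ of $\mathcal{H}$-colorings of the $a_j$-copies, some $b_i$ renders $\tau$ volatile on the $b_i$-copy (i.e., no compatible $\mathcal{H}$-coloring of the $b_i$-copy exists). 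The task is thus to upper-bound the probability that some tuple is never volatilized and check it is less than $1$ for the stated $t$.

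The saving over Theorem~\ref{thm: cartprodcompbipartite} comes from exploiting an equivalence relation on tuples: declare $\tau\sim\tau'$ iff there exist $s_1,\ldots,s_k\in\mathbb{Z}_{k+2}$ with $\phi_{a_j}'=\phi_{a_j}+s_j$ for each $j$. Since the canonical labeling on each $a_j$-copy is invariant under global color shifts, each class has size exactly $(k+2)^k$, yielding $N=(P_{DP}(C_{2m+1},k+2)/(k+2))^k$ classes. The restriction to shift matchings makes the whole construction shift-equivariant, so a single $b_i$ either volatilizes every representative of a given class or none of them; consequently it suffices that every class be volatilized by at least one $b_i$.

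The crux, and the main obstacle, is showing that the probability $p$ that a fixed class $[\tau]$ is NOT volatilized by a single $b_i$ satisfies $p\leq 1-(k+1)!/(2(k+2)^{k-1})$ for $k\geq 2$. This requires an explicit enumeration of those configurations of the $k$ random shifts per vertex which, when combined with the canonical/twisted-canonical structure along the $b_i$-copy, produce a reduced cover of $C_{2m+1}$ whose $2$-element lists violate the parity condition required for proper list-coloring; the numerator $(k+1)!$ counts the ``good'' arrangements and $2(k+2)^{k-1}$ is the size of the shift-reduced ambient configuration space. Granted this bound on $p$, a first-moment calculation gives $\mathbb{E}[\#\{\text{non-volatilized tuples}\}]\leq (k+2)^k N p^t$, and a short algebraic check confirms that the definition of $c_k$ is precisely calibrated so that $t\geq c_k N$ forces this expectation to be strictly less than $1$, whence a bad cover exists. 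The case $k=1$ is handled separately since the formula for $c_k$ degenerates there; in that case the reduced cover on each $b_i$-copy automatically has $2$-element lists, and a direct parity argument on a single random shift matching yields $c_1=1$.
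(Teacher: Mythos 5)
Your overall framework—probabilistic construction of a bad $(k+2)$-fold cover, canonical labeling on the $a_j$-copies, equivalence classes of colorings under shifts, first-moment argument—is on the right track and matches the paper's plan. However, there are two gaps in the middle of the argument that are fatal to the proposal as written.

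\textbf{Shift matchings do not create volatile colorings.} You propose that each matching between $L((u,a_j))$ and $L((u,b_i))$ be a color-shift $c\mapsto c+r$. Say the shift is constant in $u$. Then, for a fixed $\mathcal{H}_X$-coloring $\tau = (\phi_{a_1},\ldots,\phi_{a_k})$, the color forbidden at $(u,b_i)$ by $\phi_{a_j}$ is $(\phi_{a_j}(u)+r_j)\bmod(k+2)$, which \emph{varies with} $u$ because $\phi_{a_j}$ is a proper coloring. Consequently the set of $k$ forbidden colors is different at adjacent vertices, and if the $b_i$-copy has a canonical labeling, the surviving cross edges at each cycle edge $uu'$ are only those diagonal edges $(u,b_i,l)(u',b_i,l)$ with $l$ unforbidden at \emph{both} ends—generically at most one edge. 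The reduced $2$-fold cover on the $b_i$-copy is then never a full cover with a canonical labeling, so by Lemma~\ref{lem: oddchar} it always admits a coloring and $\tau$ is never volatile. (You can verify this explicitly on $C_3$: for $\phi=(1,2,3)$, every $r\in\mathbb{Z}_3$ leaves a colorable reduced cover.) The paper's random matching is qualitatively different: a random bijection $\sigma_j$ is chosen from the $(k+2)$ colorings in the equivalence class $f(\mathcal{E}_{j,p_j})$ to $[k+2]$, and the vertex in $I_l\cap L(u,x_j)$ is matched to color $\sigma_j(I_l)$ at $(u,y_q)$—the same target color \emph{independent of $u$}. This is what forces the forbidden set on the $y_q$-copy to be constant across vertices, so that when the $k$ values $\sigma_j(J_{p_j})$ are distinct, the reduced cover is exactly a $2$-fold cover with a canonical labeling, and Lemma~\ref{lem: oddchar} gives volatility.

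\textbf{Shift-equivariance is false and would contradict the paper's own sharpness.} You claim a single $b_i$ either volatilizes every representative of a class or none, and then compute $\mathbb{E}[\#\text{non-volatilized}]\leq (k+2)^k N p^t$. If that were the right random model, then $p^t<1/((k+2)^kN)$ needs only $t>\ln((k+2)^kN)/\ln(1/p)$, i.e.\ $t=O(\log N)=O(\log d)$, dramatically smaller than $c_k(d/(k+2))^k$; note this also makes your own statement ``$t\geq c_k N$ forces expectation $<1$'' internally inconsistent, since your bound needs only a logarithm of $N$. But Proposition~\ref{cor: oddsharp} proves $f(C_{2m+1},1)=P_{DP}(C_{2m+1},3)/3$, which is exponential in $m$, so a logarithmic bound on $t$ is impossible. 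The point you are missing is that, in the paper's construction, two colorings $s,s'$ in the same class $S_{\boldsymbol{p}_a}$ map under $\sigma_j$ to different colors and hence have different volatility status for a given $y_q$; this is precisely why the paper allocates $c$ \emph{dedicated} cycles per class (disjoint across classes), computes $\mathbb{E}[X_a]>(k+2)^k-1$ to conclude that with positive probability the $c$ cycles assigned to class $a$ volatilize all $(k+2)^k$ of its members, and then uses independence of the random choices across classes to get a single realization that works for all classes simultaneously. Your probability $1-(k+1)!/(2(k+2)^{k-1})$ is numerically the paper's $1-(k+2)!/(2(k+2)^k)$, so the count is consistent with the intended model; but it must be interpreted as the per-coloring, per-assigned-cycle probability, not a per-class probability—and it only arises from the coloring-to-color random bijection, not from shift matchings.

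Finally, a minor point: for odd cycles the bad $2$-fold covers are those with a \emph{canonical} labeling (Lemma~\ref{lem: oddchar}); twisted-canonical labelings characterize bad even-cycle covers. The paper accordingly uses canonical labeling covers on all copies (both $x$- and $y$-copies) in the odd-cycle case, and your mention of ``combining canonical and twisted-canonical labeling edges'' on the $b_i$-copies does not match the construction needed here.
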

    
For example, the theorem requires $c_k = 1, 3, 8$ when $k = 1, 2, 3$ respectively. It is easy to see that $c_k < \left(\frac{2k\ln(k+2)}{(k+1)!}\right)(k+2)^{k}$, and hence $\left(\frac{2k\ln(k+2)}{(k+1)!}\right) (P_{DP}(C_{2m+1},k + 2))^k$ suffices as a lower bound on $t$, a strong improvement on Theorem~\ref{thm: cartprodcompbipartite} when $G$ is an odd cycle. 

Note that Theorem~\ref{thm: oddrandom} implies that $f(C_{2m+1},k) \leq c_k\left({P_{DP}(C_{2m+1},k + 2)}/{(k+2)}\right)^k$. Next we show that Theorem~\ref{thm: oddrandom} is sharp when $k=1$; specifically, $f(C_{2m+1},1) = {P_{DP}(C_{2m + 1},3)}/{3} = {(2^{2m+1}-2)}/{3}$. For list coloring, it is shown in~\cite{KM18} that  $g(C_{2m+1},1) =$ $P_{\ell}(C_{2m+1},3)$ $=(2^{2m+1}-2)$ where $g(C_{2m+1},1)$ is the list coloring analogue~\footnote{$g(G,k)$ is the function satisfying: $\chi_{\ell}(G \square K_{k,t}) =\chi_{\ell}(G) + k$ if and only if $t \geq g(G,k)$.} of $f(C_{2m+1},1)$. 

We conclude Section~\ref{four} by proving the even cycle analogue of Theorem~\ref{thm: oddrandom}.

\begin{thm} \label{thm: evenrandom}
        Given $k \in \mathbb{N}$, let $c_k = \left \lceil{\frac{k\ln(k+2)}{k\ln(k+2)-\ln((k+2)^{k}-\floor{(k+2)/2}k!)}}\right \rceil$. Then,\\ $\chi_{DP}(C_{2m+2}\square K_{k,t}) \allowbreak= k+3$ whenever $t\geq c_k\left(\frac{P_{DP}(C_{2m+2},k + 2)}{k+2}\right)^k = c_k\left(\frac{(k+1)^{2m+2}-1}{k+2}\right)^k$.
\end{thm}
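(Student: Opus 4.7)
The upper bound $\chi_{DP}(C_{2m+2}\square K_{k,t}) \leq k+3$ is immediate from Theorem~\ref{thm: cartprod}, since $\chi_{DP}(C_{2m+2}) = 3$ and $\col(K_{k,t}) = k+1$. For the lower bound, the plan is to mirror the probabilistic argument used to prove Theorem~\ref{thm: oddrandom}: construct a random $(k+2)$-fold cover of $C_{2m+2}\square K_{k,t}$ and show, via an expected-count calculation, that with positive probability it admits no $\mathcal{H}$-coloring. The key substitution is that twisted-canonical labelings replace canonical labelings throughout, because these are what realize the minimum $P_{DP}(C_{2m+2}, k+2) = (k+1)^{2m+2} - 1$ on an even cycle (on an odd cycle canonical labelings suffice).

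Let $G = C_{2m+2}$, write $A = \{a_1,\ldots,a_k\}$ and $B = \{b_1,\ldots,b_t\}$ for the partite sets of $K_{k,t}$, and let $G' = G\square K_{k,t}$. The cover $\mathcal{H}$ is constructed as follows. On each copy $G_v$ (for $v\in V(K_{k,t})$), fix a twisted-canonical $(k+2)$-fold subcover, so the set $\mathcal{V}_v$ of $\mathcal{H}_v$-colorings of $G_v$ consists of exactly the $(k+1)^{2m+2}-1$ volatile colorings from Section~\ref{two}. For each cross-edge $(u,a)(u,b)$ of $G'$ with $a\in A$, $b\in B$, $u\in V(G)$, the cross matching between $L((u,a))$ and $L((u,b))$ is drawn by the same mixed procedure as in the proof of Theorem~\ref{thm: oddrandom}: deterministic matchings read off from canonical/twisted-canonical labelings (Section~\ref{three}), together with random matchings driven by a color-permutation equivalence relation on volatile colorings. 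Since $A$ is an independent set in $K_{k,t}$, every $\mathcal{H}$-coloring factors as a tuple $(f_1,\ldots,f_k)\in\prod_i \mathcal{V}_{a_i}$ together with, independently across $B$, a compatible $g_j\in\mathcal{V}_{b_j}$ for each $b_j$.

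The heart of the argument is the expected-count computation. Partition the volatile colorings of each $G_v$ into the $k+2$ classes of the color-permutation equivalence, each class of size $P_{DP}(G,k+2)/(k+2)$. By the structured choice of matchings, the probability that a fixed left-side class tuple extends through a single $b\in B$ is at most $p = 1 - \lfloor(k+2)/2\rfloor k!/(k+2)^k$. Taking the $t$-th power of $p$ (independence across $B$), unioning over class tuples, and accounting for the $(P_{DP}(G,k+2)/(k+2))^k$ representatives per class through the same scaling as in the odd case gives that the expected number of $\mathcal{H}$-colorings drops below $1$ precisely when $t\geq c_k(P_{DP}(G,k+2)/(k+2))^k$ with $c_k$ as stated — a routine logarithmic rearrangement of $p^{c_k}\leq (k+2)^{-k}$. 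Hence a bad cover exists and $\chi_{DP}(G') \geq k+3$.

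The principal obstacle is establishing the probability bound with the exact factor $\lfloor(k+2)/2\rfloor k!$. In the odd case the analogous count is $(k+1)!$, because any color permutation at a canonical edge propagates uniquely and consistently around an odd cycle. In the even case the twisted edge interacts with the even parity of the cycle to obstruct some of these propagations, cutting the admissible count down to $\lfloor(k+2)/2\rfloor k!$. Justifying this tight count will require a careful case analysis on the colorings at the two endpoints of the twisted edge and how they propagate along the two arcs of the cycle that meet there — essentially a refinement of the bad-cover characterization for even cycles (via twisted-canonical labelings) from Section~\ref{three}. Once this estimate is in hand, the random construction, the equivalence-class grouping, the independence across $B$, and the logarithmic manipulation that produces $c_k$ are all direct adaptations of the odd-case argument.
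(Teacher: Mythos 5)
Your high-level plan is the right one — mirror Theorem~\ref{thm: oddrandom} with twisted-canonical labelings in place of canonical ones, and run an expected-count argument on the random cross matchings — but you have correctly flagged, and not closed, the step on which the whole theorem turns: establishing that the probability a block element is volatile for one fixed $M_{y_r}$ equals exactly $\lfloor(k+2)/2\rfloor k!/(k+2)^k$. Moreover, the heuristic you offer for where that factor comes from is misdirected. It is not a phenomenon of ``parity propagation around the twisted edge.'' In the paper, the factor $\lfloor(k+2)/2\rfloor$ is baked in \emph{by design} on the $Y$ side: the cover restricted to each $V(C)\times\{y_q\}$ is built so that the $k+2$ colors form $\lfloor(k+2)/2\rfloor$ designated pairs $\{2l-1,2l\}$, each pair a $2$-fold twisted-canonical (hence bad) subcover, with no other $2$-subset of colors yielding a full, let alone bad, subcover. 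The volatility condition is then: the $k$ left-side classes land on $k$ distinct colors \emph{and} the two leftover colors form one of the designated pairs — giving $\lfloor(k+2)/2\rfloor\cdot k\cdot(k-1)\cdots 1\cdot((k+1)!)^k / ((k+2)!)^k$. Contrast with the odd case, where any surviving $2$-subset is automatically a bad (canonical) $2$-fold cover, so only distinctness of the $k$ chosen colors is required, giving $(k+2)(k+1)\cdots 3\cdot((k+1)!)^k/((k+2)!)^k$. Without constructing and analyzing this specific $Y$-side structure, the probability bound cannot be derived, so your argument as written stops short of the result.

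There are also smaller but substantive slips that need repair. On the $X$ side, the paper uses $(k+2)$-fold $C_{2m+2}$-\emph{twisters} (cyclic-shift twist), not arbitrary twisted-canonical labelings; for $k+2\geq 3$ a generic twisted-canonical cover does not minimize $P_{DP}$, while the twister does by Lemma~23 of~\cite{KM20}. You also call the $(k+1)^{2m+2}-1$ colorings of a copy $G_v$ ``volatile colorings,'' which conflates two distinct notions — those are just $\mathcal{H}_v$-colorings; volatility is a relation between an $\mathcal{H}_X$-coloring and a fiber $M_{y_q}$. Finally, the expectation bookkeeping is off: the relevant power is the $c$-th (the $c$ cycles assigned to each block of size $(k+2)^k$), not the $t$-th, and the count that must exceed $(k+2)^k - 1$ is the expected number of volatile elements \emph{within a single block}, not ``the expected number of $\mathcal{H}$-colorings.''
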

    
For example, the theorem requires $c_k = 3, 10, 48$ when $k = 1, 2, 3$ respectively. It is easy to see that $c_k < \left(\frac{2\ln(k+2)}{\floor{(k+2)/2}(k-1)!}\right)(k+2)^k$, and hence $\left(\frac{2\ln(k+2)}{\floor{(k+2)/2}(k-1)!}\right) (P_{DP}(C_{2m+2},k + 2))^k$ suffices as a lower bound on $t$, a strong improvement on Theorem~\ref{thm: cartprodcompbipartite} when $G$ is an even cycle. 

Note that Theorem~\ref{thm: evenrandom} implies that $f(C_{2m+2},k) \leq c_k\left({P_{DP}(C_{2m+2},k + 2)}/{(k+2)}\right)^k$. The sharpness of Theorem~\ref{thm: evenrandom} follows from the earlier result: $f(C_{2m+2},1) = P_{DP}(C_{2m+2},3) = 2^{2m+2}-1$.

\section{DP-coloring Cartesian products of Graphs} \label{two}

We now prove Theorem~\ref{thm: cartprod} using a straightforward generalization of Borowiecki et al.'s argument~\cite{BJ06} for Theorem~\ref{thm: Borow1}.

\begin{customthm} {\bf \ref{thm: cartprod}}
    For any graphs $G$ and $H$, $\chi_{DP}(G\square H) \leq \min\{\chi_{DP}(G) + \col(H),\chi_{DP}(H) + \col(G)\} - 1$.
\end{customthm}

\begin{proof}
Since the Cartesian product of graphs is commutative, we assume without loss of generality that $\chi_{DP}(G) + \col(H)\leq \chi_{DP}(H) + \col(G)$. We let $m = \chi_{DP}(G)$, $k = \col(H)$, and $d = \chi_{DP}(G) + \col(H) - 1$.  Suppose $V(H) = \{v_{i} : i \in [n]\}$. Let $\mathcal{H} = (L, M)$ be a $d$-fold cover of $G\square H$. To prove the bound, we will show the existence of an independent set of size $n|V(G)|$ in $M$. We proceed by induction on $n$. If $n = 1$, then $G\square H \cong G$ and clearly $k = 1$. So there exists an $\mathcal{H}$-coloring of $G\square H$ since $d = \chi_{DP}(G)$.

Now suppose $n > 1$. Since $k = \col(H)$, there exists an ordering of the vertices of $H$ such that each vertex has at most $k-1$ neighbors preceding it in that ordering. Suppose $v_1, \ldots, v_n$ is one such ordering. Let $H' = H[\{v_1, \hdots, v_{n - 1}\}]$. We now construct a $d$-fold cover of $G\square H'$. First, we define the function $L'$ on $V(G\square H')$ so that for each $(u,v)\in V(G\square H')$, $L'(u,v) = L(u,v)$. Next, let $S = \bigcup_{(u,v)\in V(G\square H')}L(u,v)$ and let $M' = M[S]$. Notice that $\mathcal{H}' = (L',M')$ is a $d$-fold cover of $G\square H'$.
By the inductive hypothesis, $\chi_{DP}(G\square H')\leq \chi_{DP}(G) + \col(H') - 1$ and since $\col(H')\leq\col(H)$, we have $\chi_{DP}(G\square H')\leq d$. This implies that there exists an independent set $I'$ in $M'$ of size $(n - 1)|V(G)|$. 
We now extend $I'$ to an $\mathcal{H}$-coloring of $G \square H$. For each $u\in V(G)$, let $F_u=L(u,v_n)\cap N_M(I')$. Note that for each $i\in [n - 1]$, $|I' \cap L(u,v_i)| = 1$ and in $H$, $v_n$ has at most $(k - 1)$ neighbors in $\{v_1, \hdots, v_{n - 1}\}$. So for each $u\in V(G)$, $|F_u| \leq (k - 1)$ and thus $|L(u,v_n) - F_u| \geq m$. For each $u\in V(G)$, let $A_u$ be an $m$-element subset  of $L(u,v_n) - F_u$. We define a function, $\hat{L}$ on $V(G)\times \{v_n\}$ such that for each $u\in V(G)$, $\hat{L}(u,v_n) = A_u$. Let $A = \bigcup_{u\in V(G)} A_u$ and let $\hat{M} = M[A]$. Clearly, $\hat{\mathcal{H}} = (\hat{L},\hat{M})$ is an $m$-fold cover of $G\square H[\{v_n\}]$. Note that $G\square H[\{v_n\}]\cong G$ and since $m = \chi_{DP}(G)$, there exists an independent set $\hat{I}$ of size $|V(G)|$ in $\hat{M}$. Since no vertex in $\hat{I}$ is adjacent to any vertex in $I'$, $I = \hat{I}\cup I'$ is an independent set of size $n|V(G)|$ in $M$ as needed. 
\end{proof}

Next we wish to show that Theorem~\ref{thm: cartprod} is sharp.  To do this, we introduce the notion of volatile coloring. As we will see throughout the paper, volatile coloring is an important tool in the process of constructing bad covers of the Cartesian product of an arbitrary graph and a complete bipartite graph. In particular, it gives a necessary and sufficient condition for a cover to be bad. Though we define the notion of volatile coloring specifically for the Cartesian products with a complete bipartite factor, this definition can easily be generalized for the Cartesian products of any two graphs.

\subsection{Volatile Coloring}\label{sec:volatile}

Let $G$ be a graph with $V(G) = \{v_{i} : i \in [n]\}$ and $\chi_{DP}(G) = m$. Let $K$ be a copy of the complete bipartite graph $K_{k,t}$ with partite sets, $X = \{x_{j} : j \in [k]\}$ and $Y = \{y_{q} : q \in [t]\}$. Let $M = G \square K$ and $M_{X} = M[\{(v_{i},x_{j}) : i \in [n], j \in [k]\}]$, and for each $q \in [t]$, let $M_{y_{q}} = M[\{(v_{i},y_{q}) : i \in [n]\}]$. Let $\mathcal{H} = (L,H)$ be an $(m+k-1)$-fold cover of $M$.

Let $\mathcal{H}_{X} = (L_{X}, H_{X})$ denote the subcover of $\mathcal{H}$ induced by $V(G)\times X$. Recall that $L_{X}$ is the restriction of $L$ to $V(G)\times X$ and $H_{X} = H[\bigcup_{v \in L(V(G)\times X)} L(v)]$. Similarly, for each $q \in [t]$, let $\mathcal{H}_{y_{q}} = (L_{y_{q}}, H_{y_{q}})$ denote the subcover of $\mathcal{H}$ induced by $V(G)\times \{y_{q}\}$. 

Suppose $I$ is an $\mathcal{H}_{X}$-coloring of $M_{X}$. For each $q \in [t]$, let $D_{q} = \{u \in V(H_{y_{q}}) : N_{H}(u)\cap I = \emptyset\}$ and $H'_{y_{q}} = H[D_{q}]$. For each $u \in V(M_{y_{q}})$, we define $L'_{y_{q}}(u) = L_{y_{q}}(u)\cap D_{q}$. Let $\mathcal{H}'_{y_{q}} = (L'_{y_{q}}, H'_{y_{q}})$.
We say $I$ is \emph{volatile} for $M_{y_{q}}$, if $\mathcal{H}'_{y_{q}}$ is a bad cover of $M_{y_{q}}$.

Assuming the same setup as in the definition of volatile coloring above, we now give a necessary and sufficient condition for a cover to be bad.

\begin{lem} \label{lem: nohcoloring}
For each $\mathcal{H}_{X}$-coloring of $M_{X}$, $I_{X}$, there exists a $q \in [t]$ such that $I_{X}$ is volatile for $M_{y_{q}}$ if and only if $\mathcal{H}$ is a bad cover of $M$.

\end{lem}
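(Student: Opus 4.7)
The plan is to prove both directions by decomposing an $\mathcal{H}$-coloring of $M$ into an $\mathcal{H}_X$-piece plus compatible extensions to each $M_{y_q}$, and conversely gluing such pieces together. The underlying structural observation is that, since $X$ and $Y$ are the partite sets of $K_{k,t}$ (hence each is independent in $K$), $M$ has no edges between $V(M_{y_q})$ and $V(M_{y_{q'}})$ for distinct $q,q' \in [t]$, and the only edges of $M$ between $V(M_X)$ and $V(M_{y_q})$ are the ``vertical'' edges $(v_i,x_j)(v_i,y_q)$. By property~(3) of a cover, the same statements then hold for cross edges in $H$: there are no cross edges between $V(H_{y_q})$ and $V(H_{y_{q'}})$ when $q \neq q'$, and every cross edge from $V(H_{y_q})$ to the outside lies in $E_H(V(H_X),V(H_{y_q}))$.

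For the forward direction ($\Rightarrow$), suppose every $\mathcal{H}_X$-coloring of $M_X$ is volatile for some $M_{y_q}$, and assume toward a contradiction that $\mathcal{H}$ admits an $\mathcal{H}$-coloring $I$. Set $I_X = I \cap V(H_X)$ and $I_{y_q} = I \cap V(H_{y_q})$ for each $q \in [t]$. Then $I_X$ is an $\mathcal{H}_X$-coloring of $M_X$, and each $I_{y_q}$ is independent in $H_{y_q}$ with $|I_{y_q} \cap L(v_i,y_q)| = 1$ for all $i$. Since $I$ is independent in $H$, no vertex of $I_{y_q}$ has an $H$-neighbor in $I_X$, so $I_{y_q} \subseteq D_q$, meaning $I_{y_q}$ is an $\mathcal{H}'_{y_q}$-coloring of $M_{y_q}$. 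Thus $I_X$ is not volatile for any $q$, a contradiction.

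For the converse ($\Leftarrow$), assume $\mathcal{H}$ is bad; the claim is vacuous if $M_X$ has no $\mathcal{H}_X$-coloring, so fix such a coloring $I_X$ and suppose toward a contradiction that $I_X$ is not volatile for any $q \in [t]$. Pick, for each $q$, an $\mathcal{H}'_{y_q}$-coloring $I_{y_q}$, and set $I = I_X \cup \bigcup_{q \in [t]} I_{y_q}$. By construction $|I \cap L(u)| = 1$ for every $u \in V(M)$, so it suffices to check that $I$ is independent in $H$. Independence of $I_X$ and of each $I_{y_q}$ holds by construction; edges between $I_X$ and $I_{y_q}$ are ruled out by $I_{y_q} \subseteq D_q$; and edges between $I_{y_q}$ and $I_{y_{q'}}$ for $q \neq q'$ are ruled out by the structural observation above. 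Hence $I$ is an $\mathcal{H}$-coloring of $M$, contradicting that $\mathcal{H}$ is bad.

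The argument is essentially an unpacking of the definitions of cover, subcover, and volatile coloring, so no serious technical obstacle is expected; the only care required is in tracking which pairs of parts $L(u),L(u')$ can share cross edges, which is controlled entirely by property~(3) of a cover together with the bipartite structure of $K_{k,t}$.
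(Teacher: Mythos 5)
Your proof is correct and follows essentially the same approach as the paper's: both directions rest on the observation that an $\mathcal{H}$-coloring of $M$ decomposes as $I_X \cup \bigcup_q I_{y_q}$ with $I_{y_q} \subseteq D_q$, and conversely such pieces glue to an $\mathcal{H}$-coloring. The only difference is cosmetic — you argue each direction by contradiction while the paper argues by contrapositive, and you spell out the "no cross edges between $H_{y_q}$ and $H_{y_{q'}}$" point that the paper leaves as "clearly."
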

\begin{proof}
Suppose there exists an $\mathcal{H}_{X}$-coloring of $M_{X}$, $I_{X}$, such that for each $q \in  [t]$, $I_{X}$ is not volatile for $M_{y_{q}}$.  Since for each $q \in  [t]$, $I_{X}$ is not volatile for $M_{y_{q}}$, there exists an $\mathcal{H}'_{y_{q}}$-coloring, $I_{q}$, of $M_{y_{q}}$. Let $I_{Y} = \bigcup_{q \in [t]} I_{q}$. Clearly, $I_{X}\cup I_{Y}$ is an $\mathcal{H}$-coloring of $M$ as desired.

    Conversely suppose $M$ admits an $\mathcal{H}$-coloring, $I$. Let $I_{X} = I \cap V(H_{X})$. Clearly, $I_{X}$ is an $\mathcal{H}_{X}$-coloring of $M_{X}$. Suppose there exists an $r\in[t]$ such that $I_X$ is volatile for $M_{y_{r}}$.  Notice that $I \cap V(H_{y_{r}}) \subseteq D_{r}$. This means $I \cap V(H_{y_{r}})$ is an $\mathcal{H}'_{y_{r}}$-coloring of $M_{y_{r}}$. This is a contradiction since $I_{X}$ is volatile for $M_{y_{r}}$.
\end{proof}

By a straightforward application of Lemma~\ref{lem: nohcoloring}, next we give a sufficient condition for $M$ to admit an $\mathcal{H}$-coloring which will often be used in the remaining sections.

\begin{cor} \label{cor: hcoloring}
Let $c$ be the number of $\mathcal{H}_{X}$-colorings of $M_{X}$. Suppose for each $q \in [t]$, the number of volatile $\mathcal{H}_{X}$-colorings for $M_{y_{q}}$ is at most $z$. If $c > zt$ then $M$ admits an $\mathcal{H}$-coloring.
\end{cor}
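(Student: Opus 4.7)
The plan is to deduce this corollary directly from Lemma~\ref{lem: nohcoloring} by a double-counting (really, a union-bound / pigeonhole) argument on the set of $\mathcal{H}_X$-colorings of $M_X$. By Lemma~\ref{lem: nohcoloring}, $M$ fails to admit an $\mathcal{H}$-coloring if and only if every $\mathcal{H}_X$-coloring $I_X$ of $M_X$ is volatile for $M_{y_q}$ for at least one $q \in [t]$. So the natural route is to show that the hypothesis $c > zt$ forces the existence of some $\mathcal{H}_X$-coloring that is not volatile for any $q$, and then invoke Lemma~\ref{lem: nohcoloring} in its contrapositive form.

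Concretely, I would let $\mathcal{C}$ denote the set of all $\mathcal{H}_X$-colorings of $M_X$, so $|\mathcal{C}| = c$, and for each $q \in [t]$ let $\mathcal{V}_q \subseteq \mathcal{C}$ be the set of $\mathcal{H}_X$-colorings of $M_X$ that are volatile for $M_{y_q}$. By hypothesis, $|\mathcal{V}_q| \le z$ for every $q \in [t]$. A simple union bound then gives
\[
\Big|\bigcup_{q \in [t]} \mathcal{V}_q\Big| \le \sum_{q \in [t]} |\mathcal{V}_q| \le zt < c = |\mathcal{C}|.
\]
Consequently there exists some $I_X \in \mathcal{C} \setminus \bigcup_{q \in [t]} \mathcal{V}_q$, that is, an $\mathcal{H}_X$-coloring of $M_X$ that is not volatile for $M_{y_q}$ for any $q \in [t]$.

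By Lemma~\ref{lem: nohcoloring} (contrapositive), the existence of such an $I_X$ implies that $\mathcal{H}$ is not a bad cover of $M$, so $M$ admits an $\mathcal{H}$-coloring, as desired. There is no real obstacle here: once Lemma~\ref{lem: nohcoloring} is in hand, the only work is the union bound, and the inequality $c > zt$ is tailored exactly so that the union bound leaves room for a non-volatile coloring. The proof should be just a few lines long.
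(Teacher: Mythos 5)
Your proof is correct and is precisely the "straightforward application of Lemma~\ref{lem: nohcoloring}" that the paper alludes to without writing out: a union bound over the sets of volatile colorings, followed by the contrapositive of the lemma. This matches the paper's intended argument.
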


Using Lemma~\ref{lem: nohcoloring}, we are now ready to prove the sharpness of Theorem~\ref{thm: cartprod}.

\begin{customthm} {\bf \ref{thm: cartprodcompbipartite}}
    For any graph $G$, $\chi_{DP}(G\square K_{k,t}) = \chi_{DP}(G) + k$ whenever $t \geq (P_{DP}(G,\chi_{DP}(G)+k-1))^{k}$.
\end{customthm}

\begin{proof}
Let $\chi_{DP}(G) = m$ and $K$ be a complete bipartite graph with partite sets $X = \{x_{j} : j \in [k]\}$ and $Y = \{y_{q} : q \in [t]\}$. We construct a bad $(m+k-1)$-fold cover $\mathcal{H} = (L,H)$ of $G\square K$. For each $(u,v) \in V(G\square K)$, we let $L(u,v) = \{(u,v,i) : i \in [m+k-1]\}$. We begin constructing the graph $H$ by defining the vertex set, $V(H) = \bigcup_{(u,v) \in V(G \square K)} L(u,v)$. We then create edges in $H$ such that for each $(u,v) \in V(G \square K)$, $H[L(u,v)]$ is a complete graph on $(m+k-1)$ vertices. Next, we construct the matchings among these cliques such that $G\square K$ does not admit an $\mathcal{H}$-coloring.

Let $P_{DP}(G,m+k-1) = d$; suppose $\mathcal{H}_{G} = (L_{G},H_{G})$ is an $(m+k-1)$-fold cover of $G$ such that $P_{DP}(G,\mathcal{H}_{G}) = d$. For each $u \in V(G)$, suppose $L_{G}(u) = \{(u,l) : l \in [m+k-1]\}$. We denote the collection of all $\mathcal{H}_{G}$-colorings of $G$ by $\mathcal{I}_{G} = \{I_{{G}_{i}}: i \in [d]\}.$

For each $x \in X$, we create edges so that $(u,x,l)(v,x,j) \in E(H)$ for all distinct $u,v \in V(G)$ whenever $(u,l)(v,j) \in E(H_{G})$. Notice that for each $x$, $H[V(G) \times \{x\} \times [m+k-1]]$ is isomorphic to $H_{G}$ and for each $j\in [k]$ the isomorphism is defined as: $g^{(j)} : V(H_{G}) \rightarrow V(G) \times \{x_{j}\} \times [m+k-1] \text{ such that }g^{(j)}(u,l) = (u,x_{j},l)$.  For each $I_{{G}_{i}} \in \mathcal{I}_{G}$, we let $I_{ij} = \{g^{(j)}(a) :  a \in I_{{G}_{i}}\}$. For each $j \in [k]$, let $\mathcal{I}_{{G},{j}}=\{I_{ij} : i \in [d]\}$

Notice that by construction all possible independent sets containing one vertex from each clique, $L(u,x_{j})$ whenever $(u,x_{j}) \in V(G) \times X$ can be formed by picking an independent set from $\mathcal{I}_{{G},{j}}$ for each $j\in [k]$. Clearly there are $d$ choices for each $j \in [k]$ giving us a total of $d^k$ possible independent sets. We denote the collection of these independent sets by $\mathcal{I} = \{I_{i} : i \in [d^k]\}$.
 
For each $I_{i} \in \mathcal{I}$ and $u \in V(G)$, we define, $I_{{i}_{u}} = I_{i} \cap (\bigcup_{x \in X} L(u,x))$. Clearly, $|I_{{i}_{u}}| = k$ for each $u \in V(G)$. We name the vertices in $I_{{i}_{u}}$ so that $I_{{i}_{u}} = \{(u,x_{j},z_{i,u,j}) : j \in [k]\}$. Note that $z_{i,u,j}\in [m + k - 1]$. For each $I_i\in\mathcal{I}$, $u \in V(G)$, and $j\in [k]$, we create edges so that$(u,x_{j},z_{i,u,j})(u,y_{i},j) \in E(H)$. Notice that this can be done since $t \geq d^{k}$.

We now create the remaining edges in $H$. Let $\mathcal{H}'_{G} = (L'_{G},H'_{G})$ be a bad $(m-1)$-fold cover of $G$. For each $u \in V(G)$, suppose $L'_{G}(u) = \{(u,l) : l \in [m-1]\}$. For each $q \in [t]$, we create edges so that $(u,y_{q},k+l)(v,y_{q},k+j) \in E(H)$ for all distinct $u,v \in V(G)$ whenever $(u,l)(v,j) \in E(H'_{G})$. This completes the construction of $H$.

For each $L(u,y_{i})$ where $(u,y_{i}) \in V(G) \times Y$, let $W_{u,y_{i}} = \{(u,y_{i},j) : j \in [k]\}$. Recall that we created edges from vertices in $W_{u,y_{i}}$ to $I_{{i}_{u}}$ for each $i\in [d^k]$. Now, for each $i \in [t]$, we define $H_{G}^{(i)} = H[\bigcup_{u \in V(G)} (L(u,y_{i}) - W_{u,y_{i}})]$. Clearly, each $H_{G}^{(i)}$ is isomorphic to $H'_{G}$ and the isomorphism is defined as: $f^{(i)} : V(H'_{G}) \rightarrow V(H_{G}^{(i)})\text{ such that }f^{(i)}(u,l) = (u,y_{i},k+l)$.  For each $q \in [t]$, let $M_{y_{q}} = M[\{(v,y_{q}) : v \in V(G)\}]$. By construction, for each $i \in [d^k]$,  $I_{i}$ is volatile for $M_{y_{i}}$ and thus by Lemma~\ref{lem: nohcoloring}, $\mathcal{H}$ is a bad cover of $G\square K$.
\end{proof}

We are now ready to turn our attention to making progress on Question~\ref{ques: 2}.

\section{Characterization of Bad Covers of Cycles} \label{three}
In this Section, we define the notions of canonical labeling and twisted-canonical labeling which characterize the bad covers of odd and even cycles respectively(see~\cite{KO19} and~\cite{LWW22} for more generalized results). Canonical labeling, in particular, is of independent interest and has been used in many recent papers on DP-coloring (see e.g.,~\cite{CL1},~\cite{CL2}, and~\cite{MT20}). As an application of these together with volatile coloring, we are able to show the sharpness of Theorem~\ref{thm: cartprodcompbipartite}. This also makes progress towards Question~\ref{ques: 2}. 

\subsection{Canonical Labeling}

Suppose $G$ is a graph and $\mathcal{H} = (L,H)$ is a $k$-fold cover of $G$. We say $\mathcal{H}$ has a \emph{canonical labeling} if for each $v \in V(G)$, it is possible to let $L(v) = \{(v,j) : j \in [k]\}$ so that whenever $uv \in E(G)$, $(u,j)$ and $(v,j)$ are adjacent in $H$ for each $j \in [k]$.

\begin{figure}[h]
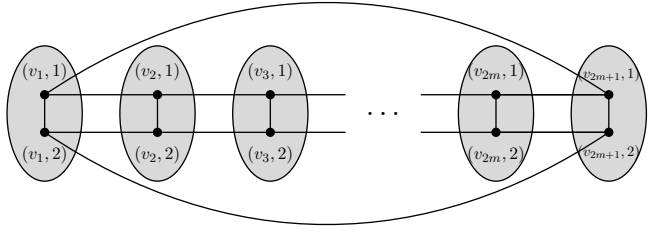

\centering
\begin{graph}

\coordinate (v11) at (-1.5,0);
        \coordinate (v12) at (-1.5,0.5);
        \coordinate (v21) at (0,0);
        \coordinate (v22) at (0,0.5);
        \coordinate (v31) at (1.5,0);
        \coordinate (v32) at (1.5,0.5);
        \coordinate (dot) at (3,0.25);
        \coordinate (v41) at (4.5,0);
        \coordinate (v42) at (4.5,0.5);
        \coordinate (v51) at (6,0);
        \coordinate (v52) at (6,0.5);
        
        \draw[fill=gray!30] (-1.5,0.25) ellipse (5mm and 9mm);
        \draw[fill=gray!30] (0,0.25) ellipse (5mm and 9mm);
        \draw[fill=gray!30] (1.5,0.25) ellipse (5mm and 9mm);
        
        \draw[fill=gray!30] (4.5,0.25) ellipse (5mm and 9mm);
        \draw[fill=gray!30] (6,0.25) ellipse (5mm and 9mm);
        
        \node at (dot) {$\hdots$};
        \draw[fill=black] (v11) circle[radius=1.5pt] node[below=3pt,scale=0.6] {$(v_1,2)$};
        \draw[fill=black] (v12) circle[radius=1.5pt] node[above=3pt,scale=0.6] {$(v_1,1)$};
        \draw[fill=black] (v21) circle[radius=1.5pt] node[below=3pt,scale=0.6] {$(v_2,2)$};
        \draw[fill=black] (v22) circle[radius=1.5pt] node[above=3pt,scale=0.6] {$(v_2,1)$};
        \draw[fill=black] (v31) circle[radius=1.5pt] node[below=3pt,scale=0.6] {$(v_3,2)$};
        \draw[fill=black] (v32) circle[radius=1.5pt] node[above=3pt,scale=0.6] {$(v_3,1)$};
        \draw[fill=black] (v42) circle[radius=1.5pt] node[above=3pt,scale=0.6] {$(v_{2m},1)$};
        \draw[fill=black] (v41) circle[radius=1.5pt] node[below=3pt,scale=0.6] {$(v_{2m},2)$};
        \draw[fill=black] (v52) circle[radius=1.5pt] node[above=3pt,scale=0.5] {$(v_{2m+1},1)$};
        \draw[fill=black] (v51) circle[radius=1.5pt] node[below=3pt,scale=0.5] {$(v_{2m+1},2)$};
        \draw (v11) -- (v12); \draw (v21) -- (v22); \draw (v31) -- (v32); \draw (v41) -- (v42); \draw (v51) -- (v52);
        
        \draw (v11) -- (v21); \draw (v21) -- (v31);
        \draw (v12) -- (v22); \draw (v22) -- (v32);
        \draw (v32) -- (2.5,0.5);
        \draw (v31) -- (2.5,0);
        \draw (3.5,0.5) -- (v52);
        \draw (3.5,0) -- (v51);
        \draw (v41) -- (v51);
        \draw (v42) -- (v52);
        \path[line width=0.5] (v11) edge[bend right=34] (v51);
        \path[line width=0.5] (v12) edge[bend right=-34] (v52);

\end{graph}
\caption{A $2$-fold cover of an odd cycle with a canonical labeling}
\label{figure:CL}
\end{figure}

Note that if $G$ is a graph and $\mathcal{H}$ is an $m$-fold cover of $G$ with a canonical labeling, then $G$ has a proper $m$-coloring if and only if $G$ admits an $\mathcal{H}$-coloring. We next restate a result of Kaul and Mudrock~\cite{KM20} using the concept of canonical labeling.

\begin{pro}[\cite{KM20}] \label{prop: treecl}
Let $T$ be a tree and $\mathcal{H} = (L,H)$ be a full $m$-fold cover of $T$. Then, $\mathcal{H}$ has a canonical labeling.
\end{pro}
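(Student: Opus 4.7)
The plan is to root the tree $T$ at an arbitrary vertex and assign labels to the cover vertices by propagating outward along tree edges, using the perfect matchings guaranteed by the fullness hypothesis. Formally, I would proceed by induction on $|V(T)|$; the base case $|V(T)| = 1$ is trivial since any bijection $L(r) \to [m]$ gives a canonical labeling when there are no edges.

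For the inductive step, pick any leaf $\ell$ of $T$, with unique neighbor $u$, and let $T' = T - \ell$. Let $\mathcal{H}'$ denote the subcover of $\mathcal{H}$ induced by $V(T')$; this is a full $m$-fold cover of $T'$, so by the inductive hypothesis it admits a canonical labeling $L(w) = \{(w,j) : j \in [m]\}$ for each $w \in V(T')$. Since $\mathcal{H}$ is full, $E_H(L(u), L(\ell))$ is a perfect matching between $L(u)$ and $L(\ell)$. For each $j \in [m]$, let $(\ell, j)$ denote the unique vertex of $L(\ell)$ matched to $(u,j)$ under this perfect matching; this provides a well-defined labeling $L(\ell) = \{(\ell,j) : j \in [m]\}$, and by construction $(u,j)$ is adjacent to $(\ell,j)$ in $H$ for every $j$. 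Combined with the canonical labeling on $T'$, this yields a canonical labeling of $\mathcal{H}$.

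The main obstacle to overcome in such arguments is consistency: when one propagates labels along edges, one must ensure that independently derived constraints on the same set $L(v)$ agree. Here the tree structure removes this difficulty entirely, because each vertex other than the root has a unique parent, so the labeling of each $L(v)$ is determined exactly once and no cycle can create a conflicting pair of constraints. The essential use of the fullness hypothesis is that each matching $E_H(L(u), L(v))$ is perfect, so lifting the labeling from $L(u)$ to $L(v)$ via the matching produces a bijection $L(v) \to [m]$ rather than an incomplete assignment.
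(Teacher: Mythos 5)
Your proof is correct. Note that the paper does not actually supply its own proof of this proposition; it is cited from Kaul and Mudrock~\cite{KM20} and merely restated in the language of canonical labelings. Your leaf-removal induction is the natural argument and is sound: removing a leaf $\ell$ with neighbor $u$, applying the inductive hypothesis to the induced subcover on $T-\ell$ (which is indeed a full $m$-fold cover of $T-\ell$), and then transporting the labels of $L(u)$ across the perfect matching $E_H(L(u),L(\ell))$ gives exactly one well-defined label to each vertex of $L(\ell)$, and the new cross edges are by construction of the form $(u,j)(\ell,j)$. The observation that the tree structure forecloses any conflicting constraints on a single $L(v)$ correctly identifies why the argument cannot be carried out for graphs with cycles (where one would instead arrive at the obstruction underlying twisted-canonical labelings).
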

Using the notion of canonical labeling, we can characterize the bad 2-fold covers of odd cycles. See~\cite{KO19} for a more general result about bad covers non-DP-degree colorable graphs, which also implies the following result. Note that in~\cite{KO19}, a cover with canonical labeling is called a \emph{ladder}. 
\begin{lem} \label{lem: oddchar}
Suppose that $G$ is an odd cycle and $\mathcal{H} = (L,H)$ is a cover of $G$ where $|L(v)| \geq 2$ for each $v \in V(G)$. Then $G$ does not admit an $\mathcal{H}$-coloring if and only if $\mathcal{H}$ is a $2$-fold cover with a canonical labeling.
\end{lem}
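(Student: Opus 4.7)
The $(\Leftarrow)$ direction is immediate: in a $2$-fold cover with canonical labeling, every $\mathcal{H}$-coloring corresponds bijectively to a proper $2$-coloring of $G$, and odd cycles are not $2$-colorable. For the converse, write $G = v_1 v_2 \cdots v_{2m+1} v_1$ and suppose $G$ does not admit an $\mathcal{H}$-coloring. My plan is to carry out three successive reductions: (a) every $|L(v_i)|$ equals exactly $2$; (b) every matching $M_i := E_H(L(v_i), L(v_{i+1}))$ (indices mod $2m+1$) is perfect; and (c) the resulting full $2$-fold cover admits a canonical labeling.

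For (a), suppose instead that some $|L(v_j)| \geq 3$; after relabeling I take $j = 1$. Pick any $c_{2m+1} \in L(v_{2m+1})$ and walk backward along the path, greedily choosing $c_{2m}, c_{2m-1}, \ldots, c_2$: at each step the only forbidden color is the one matched with the already-chosen neighbor, so a choice exists from the at least $2$ colors available. Finally at $v_1$, at most two colors are forbidden (one via $M_1$ by $c_2$ and one via $M_{2m+1}$ by $c_{2m+1}$), and $|L(v_1)| \geq 3$ leaves a valid $c_1$. This produces an $\mathcal{H}$-coloring, a contradiction. Assuming now that each $|L(v_i)| = 2$, I prove (b): if some $M_j$ has at most one edge, pick $c_j \in L(v_j)$ not incident to any edge of $M_j$ (possible since $|M_j| \leq 1 < |L(v_j)|$), and walk backward from $v_j$ around the cycle, greedily picking $c_{j-1}, \ldots, c_{j+1}$ as above. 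The closing edge $v_j v_{j+1}$ is automatically satisfied because $c_j$ has no neighbor in $L(v_{j+1})$, so the resulting set is an $\mathcal{H}$-coloring — contradiction. Hence $\mathcal{H}$ is a full $2$-fold cover.

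For (c), apply Proposition~\ref{prop: treecl} to the path $P = v_1 v_2 \cdots v_{2m+1}$ to relabel each $L(v_i) = \{(v_i, 1), (v_i, 2)\}$ so that every $M_i$ for $1 \leq i \leq 2m$ is the identity matching. The remaining $M_{2m+1}$ is then either the identity (yielding a canonical labeling of $\mathcal{H}$ on the whole cycle) or the swap. If $M_{2m+1}$ were the swap, then the alternating assignment $I = \{(v_i, 1) : i \text{ odd}\} \cup \{(v_i, 2) : i \text{ even}\}$ would be an $\mathcal{H}$-coloring: consecutive terms along $P$ differ in their second coordinate so no identity matching connects them, and $(v_{2m+1}, 1)$ is joined under the swap to $(v_1, 2)$, not to $(v_1, 1)$. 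This contradicts our hypothesis, so $M_{2m+1}$ must be the identity and $\mathcal{H}$ has a canonical labeling.

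The main subtlety lies in steps (a) and (b): one has to orient the greedy walk so that the constraints at the closing vertex can all be absorbed by either the extra color (step (a)) or the unmatched reserved color (step (b)). Once these reductions are made, step (c) is essentially forced by the parity of the cycle together with Proposition~\ref{prop: treecl}.
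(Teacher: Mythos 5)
Your proof is correct and follows essentially the same strategy as the paper: a greedy walk when some list has size at least $3$, Proposition~\ref{prop: treecl} applied to the path obtained by deleting one edge of $G$, and the observation that a non-canonical full $2$-fold cover has exactly one ``swapped'' matching, which yields an alternating independent set. The one genuine structural difference is your step~(b). The paper disposes of non-full covers with a quick ``we may assume $\mathcal{H}$ is full'' reduction, extending the matchings to perfect ones and noting that independent sets pass from the extension back to $H$; but that reduction only produces a coloring when the full extension is itself non-canonical, and completing a non-full cover can in fact create a canonical labeling (for example, identity matchings on $C_3$ with one cross edge removed). Your step~(b) sidesteps this by reserving a color of $L(v_j)$ that is unmatched across the incomplete edge and walking the cycle backward so that the closing constraint is vacuous, which directly produces an $\mathcal{H}$-coloring regardless of what any full extension looks like. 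That makes your version of the reduction a bit tighter than the paper's, though the overall architecture and the use of Proposition~\ref{prop: treecl} are the same.
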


\begin{proof}
   Let $G = C_{2m+1}$ where $m\in\mathbb{N}$. Suppose $\mathcal{H}$ is a $2$-fold cover with a canonical labeling. The fact that $G$ does not admit an $\mathcal{H}$-coloring immediately follows from the fact that $G$ is not 2-colorable.

Conversely, suppose first that $\mathcal{H}$ is a 2-fold cover without a canonical labeling. We can assume $\mathcal{H}$ is a full cover. Otherwise, we could define $H'$ such that $V(H') = V(H)$ and $E(H)\subseteq E(H')$ but $E_{H'}(L(u),L(v))$ is a perfect matching for each $uv\in E(G)$, and note that if $I$ is an independent set in $H'$, then it will be an independent set in $H$. For any edge $uv\in E(G)$, the cover $(L, H - E(L(u),L(v)))$ of $G - uv$ has a canonical labeling by Proposition~\ref{prop: treecl}. Hence, we can name the vertices of $H$ such that for each $xy\in E(G) - \{uv\}$, the edges $(x,1)(y,1)$ and $(x,2)(y,2)$ are in $E(H)$. Since we know that $\mathcal{H}$ does not have a canonical labeling, it must then be the case that $(u,1)(v,2)$ and $(u,2)(v,1)$ are in $E(H)$. So, we can construct an independent set $I$ in $H$ as follows. Suppose the vertices of $G$ are ordered cyclically as $x_1,x_2,\hdots,x_{2m+1}$ where $x_1 = u$ and $x_{2m+1} = v$. Then consider the set $I = \{(x_i,1) : i\text{ is odd},1\leq i\leq 2m + 1\}\cup \{(x_i,2) : i\text{ is even},2\leq i\leq 2m\}$. Clearly $|I| = 2m + 1$, and we claim that $I$ is an independent set. First, note that there is no edge between the vertices selected from $L(x_i)$ and $L(x_{i+1})$ for $i = 1,\ldots,2m$. Finally, $(u,1),(v,1)\in I$, and as argued above, $(u,1)(v,1)\notin E(H)$. So $I$ is an independent set of size $2m + 1$ in $H$, and hence $G$ admits an $\mathcal{H}$-coloring.
    
    Now suppose $\mathcal{H}$ is not 2-fold.  Then we know that there must exist some $u\in V(G)$ such that $|L(u)|\geq 3$. Suppose the vertices of $G$ are ordered cyclically as $x_1,\hdots,x_{2m+1}$ where $x_{2m+1} = u$. Then we can construct an $\mathcal{H}$-coloring $I$ greedily by selecting for each $x_i$ some vertex $v_i$ in $L(x_i)$ that is not adjacent in $H$ to any vertex in $\{v_1, \ldots, v_{i-1} \}$. Since each $x_i$ for $i\in[2m]$ has at most one neighbor preceding it in the ordering there will always exist such a vertex $v_i$ for each $i\in[2m]$. Finally, since $x_{2m+1}$ has two neighbors preceding it but $|L(x_{2m+1})|\geq 3$, there must exist a $v_{2m+1} \in L(x_{2m+1})$ which can be added to $I$. Thus, $G$ admits an $\mathcal{H}$-coloring.
\end{proof}

\subsection{Twisted-Canonical Labeling}

Suppose $G$ is a graph and $\mathcal{H} = (L,H)$ is a $k$-fold cover of $G$. We say $\mathcal{H}$ has a \emph{twisted-canonical labeling} if $\mathcal{H}$ is full and it is possible to let $L(v) = \{(v,j) : j \in [k]\}$ for each $v \in V(G)$ and choose two adjacent vertices, $u$ and $v$  in $G$ so that whenever $xy \in E(G)-\{uv\}$, $(x,j)$ and $(y,j)$ are adjacent in $H$ for each $j \in [k]$ and there exists $l \in [k]$ such that $(u,l)(v,l) \notin E(H)$. We call the matching $E_{H}(L(u),L(v))$, the \emph{twist}.

\begin{figure}[h]
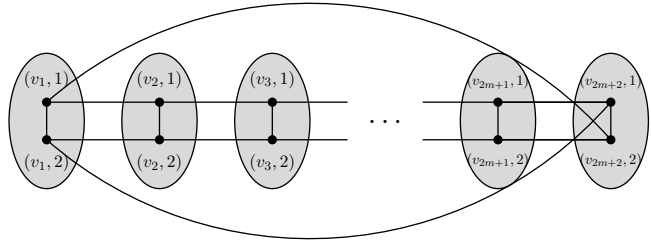

\centering
\begin{graph}

\coordinate (v11) at (-1.5,0);
        \coordinate (v12) at (-1.5,0.5);
        \coordinate (v21) at (0,0);
        \coordinate (v22) at (0,0.5);
        \coordinate (v31) at (1.5,0);
        \coordinate (v32) at (1.5,0.5);
        \coordinate (dot) at (3,0.25);
        \coordinate (v41) at (4.5,0);
        \coordinate (v42) at (4.5,0.5);
        \coordinate (v51) at (6,0);
        \coordinate (v52) at (6,0.5);
        
        \draw[fill=gray!30] (-1.5,0.25) ellipse (5mm and 9mm);
        \draw[fill=gray!30] (0,0.25) ellipse (5mm and 9mm);
        \draw[fill=gray!30] (1.5,0.25) ellipse (5mm and 9mm);
        
        \draw[fill=gray!30] (4.5,0.25) ellipse (5mm and 9mm);
        \draw[fill=gray!30] (6,0.25) ellipse (5mm and 9mm);
        
        \node at (dot) {$\hdots$};
        \draw[fill=black] (v11) circle[radius=1.5pt] node[below=3pt,scale=0.6] {$(v_1,2)$};
        \draw[fill=black] (v12) circle[radius=1.5pt] node[above=3pt,scale=0.6] {$(v_1,1)$};
        \draw[fill=black] (v21) circle[radius=1.5pt] node[below=3pt,scale=0.6] {$(v_2,2)$};
        \draw[fill=black] (v22) circle[radius=1.5pt] node[above=3pt,scale=0.6] {$(v_2,1)$};
        \draw[fill=black] (v31) circle[radius=1.5pt] node[below=3pt,scale=0.6] {$(v_3,2)$};
        \draw[fill=black] (v32) circle[radius=1.5pt] node[above=3pt,scale=0.6] {$(v_3,1)$};
        \draw[fill=black] (v42) circle[radius=1.5pt] node[above=3pt,scale=0.5] {$(v_{2m+1},1)$};
        \draw[fill=black] (v41) circle[radius=1.5pt] node[below=3pt,scale=0.5] {$(v_{2m+1},2)$};
        \draw[fill=black] (v52) circle[radius=1.5pt] node[above=3pt,scale=0.5] {$(v_{2m+2},1)$};
        \draw[fill=black] (v51) circle[radius=1.5pt] node[below=3pt,scale=0.5] {$(v_{2m+2},2)$};
        \draw (v11) -- (v12); \draw (v21) -- (v22); \draw (v31) -- (v32); \draw (v41) -- (v42); \draw (v51) -- (v52);
        
        \draw (v11) -- (v21); \draw (v21) -- (v31);
        \draw (v12) -- (v22); \draw (v22) -- (v32);
        \draw (v32) -- (2.5,0.5);
        \draw (v31) -- (2.5,0);
        \draw (3.5,0.5) -- (v52);
        \draw (3.5,0) -- (v51);
        \draw (v41) -- (v51);
        \draw (v42) -- (v52);
        \path[line width=0.5] (v11) edge[bend right=45] (v52);
        \path[line width=0.5] (v12) edge[bend right=-45] (v51);

\end{graph}
\caption{A $2$-fold cover of an even cycle with a twisted-canonical labeling}
\label{figure:TCL}
\end{figure}

The following result is a straightforward analogue of Proposition~\ref{prop: treecl} for twisted-canonical labeling.

\begin{pro} \label{prop: treetcl}
Let $T$ be a tree and let $\mathcal{H} = (L,H)$ be a full $m$-fold cover of $T$ where $m \in \mathbb{N}$. Then $\mathcal{H}$ has a twisted-canonical labeling.
\end{pro}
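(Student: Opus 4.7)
The plan is to mimic the proof of Proposition~\ref{prop: treecl} while inserting a single twist at a chosen edge. I would pick any edge $uv$ of $T$ and let $T_u$ and $T_v$ be the components of $T-uv$ containing $u$ and $v$ respectively. The restrictions of $\mathcal{H}$ to $T_u$ and to $T_v$ are themselves full $m$-fold covers of trees, so Proposition~\ref{prop: treecl} provides canonical labelings of each piece. The task then reduces to combining these two labelings so that the matching on $uv$ is forced to be non-canonical while all other edges remain canonical.

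I would first apply Proposition~\ref{prop: treecl} to $T_u$ to fix a canonical labeling $L(w) = \{(w,j) : j \in [m]\}$ for every $w \in V(T_u)$; in particular this pins down the labels of $L(u)$. Let $\phi : L(u) \to L(v)$ denote the bijection given by the perfect matching $E_H(L(u),L(v))$. To force a twist, I would pick any non-identity permutation $\pi$ of $[m]$ (available whenever $m \geq 2$) and define $(v,j) := \phi((u,\pi(j)))$ for each $j \in [m]$. Under this labeling of $L(v)$ the matching at $uv$ becomes $(u,j) \sim (v,\pi^{-1}(j))$, so for any $l$ with $\pi(l) \neq l$ we obtain $(u,l)(v,l) \notin E(H)$, which is precisely the twist demanded by the definition.

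With $L(v)$ now labeled, I would extend to a canonical labeling on the rest of $T_v$ by re-running the inductive argument behind Proposition~\ref{prop: treecl}, only now with $T_v$ rooted at $v$ and the labeling of $L(v)$ held fixed: processing vertices of $T_v$ in BFS order from $v$, each non-root vertex $w$ has a unique parent edge whose perfect matching uniquely determines the labeling of $L(w)$ that makes that edge canonical. This yields canonical labels on every edge of $T_v$, which together with the canonical labels on $T_u$ and the twisted matching on $uv$ gives a twisted-canonical labeling of $\mathcal{H}$.

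The argument is essentially routine once $\pi$ has been introduced; the only real subtlety is the degenerate case $m=1$, in which a full $1$-fold cover forces the identity matching on every edge and no twist is possible, so the statement must be read as applying for $m \geq 2$.
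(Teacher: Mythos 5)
The paper does not actually supply a proof of Proposition~\ref{prop: treetcl}; it is stated as a ``straightforward analogue'' of Proposition~\ref{prop: treecl} and left to the reader. So there is no proof to compare against verbatim, but your argument is a correct and natural way to fill the gap. Splitting $T$ at an edge $uv$, canonically labeling $T_u$ via Proposition~\ref{prop: treecl}, defining $L(v)$ through the matching composed with a non-identity permutation $\pi$, and then propagating canonically through $T_v$ by BFS (using the key fact that in a tree, once a root's labels are fixed, each parent--child perfect matching determines the child's canonical labels uniquely) does yield a twisted-canonical labeling: under your labeling $(u,l)$ is matched to $(v,\pi^{-1}(l))$, so any $l$ not fixed by $\pi$ gives $(u,l)(v,l)\notin E(H)$. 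A marginally slicker route the authors may have had in mind is to first apply Proposition~\ref{prop: treecl} to all of $T$ to get a fully canonical labeling, then relabel every vertex of one component of $T-uv$ by a fixed non-identity permutation; this preserves canonicity on all edges except $uv$ and creates the twist there. The two approaches are equivalent in substance.

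Your closing remark about $m=1$ is a legitimate observation about the paper rather than a flaw in your proof: a full $1$-fold cover forces the unique cross edge $(u,1)(v,1)$ on every edge, so no twist can exist, and the proposition as written (with $m\in\mathbb{N}$) is vacuously false there. The hypothesis should read $m\ge 2$. One additional edge case you gloss over: $T$ must have at least one edge, since the definition of twisted-canonical labeling requires choosing two \emph{adjacent} vertices $u,v$; for the one-vertex tree the statement again fails. Both are harmless imprecisions in the paper, and your treatment of them is appropriate.
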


Notice that by Propositions~\ref{prop: treecl} and~\ref{prop: treetcl}, a cover that has a canonical labeling may also have a twisted-canonical labeling. In contrast, we have the following result for cycles.

\begin{lem} \label{lem: twofold}
Let $G$ be a cycle and suppose $\mathcal{H}$ is a $2$-fold cover of $G$. Then $\mathcal{H}$ has a twisted-canonical labeling if and only if it is full and has no canonical labeling.
\end{lem}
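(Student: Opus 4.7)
The plan is to attach to $\mathcal{H}$ a parity invariant and show that canonical labelings correspond to parity $0$ while twisted-canonical labelings correspond to parity $1$. First I observe that if $\mathcal{H}$ has a canonical labeling then $\mathcal{H}$ must be full: the edges $(u,1)(v,1)$ and $(u,2)(v,2)$ already form a perfect matching between $L(u)$ and $L(v)$, so $E_H(L(u),L(v))$ must equal this perfect matching (as it is a matching by the definition of a cover). Twisted-canonical labelings are full by definition. So both directions of the lemma take place in the setting where $\mathcal{H}$ is full.

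Write $V(G) = \{v_1, \ldots, v_n\}$ with cyclic edges $v_i v_{i+1}$. Fix an arbitrary labeling $L(v_i) = \{(v_i, 1), (v_i, 2)\}$. Because $\mathcal{H}$ is full and $2$-fold, each matching $E_H(L(v_i), L(v_{i+1}))$ is either \emph{parallel} (the diagonal matching $\{(v_i,j)(v_{i+1},j) : j \in [2]\}$) or \emph{crossed} (the swap matching $\{(v_i,j)(v_{i+1},3-j) : j \in [2]\}$). Let $p(\mathcal{H}) \in \{0,1\}$ denote the parity of the number of crossed matchings. The key claim is that $p(\mathcal{H})$ does not depend on the chosen labeling: every relabeling is a composition of single-vertex label swaps, and swapping $(v_i,1)$ with $(v_i,2)$ flips the parallel/crossed status of exactly the two matchings incident to $v_i$, preserving the parity of the total count.

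A canonical labeling is exactly a labeling under which every matching is parallel, so one exists iff $p(\mathcal{H}) = 0$. A twisted-canonical labeling (with $k=2$) is exactly a labeling under which exactly one matching is crossed: the condition that some $(u,l)(v,l) \notin E(H)$ forces the perfect matching on the designated edge $uv$ to be the swap matching, while the diagonal adjacency condition forces every other matching to be parallel. So the existence of a twisted-canonical labeling implies $p(\mathcal{H}) = 1$. Conversely, suppose $p(\mathcal{H}) = 1$ and start from any labeling; as long as at least two edges carry crossed matchings, pick two such edges and swap the labels on every vertex strictly between them along one of the two arcs. This flips the status of the two chosen matchings and flips each intermediate matching twice, decreasing the total number of crossed matchings by $2$. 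Iterating terminates with a labeling in which exactly one matching is crossed, giving a twisted-canonical labeling.

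Combining these observations, $\mathcal{H}$ has a twisted-canonical labeling iff $\mathcal{H}$ is full and $p(\mathcal{H}) = 1$ iff $\mathcal{H}$ is full and has no canonical labeling, which is the content of the lemma. The one step requiring mild care is the reduction argument that $p(\mathcal{H}) = 1$ allows all crossings to be concentrated onto a single edge, but this is a routine parity manipulation rather than a genuine obstacle.
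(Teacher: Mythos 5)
Your proof is correct, but it takes a genuinely different route from the paper's. You define an explicit parity invariant $p(\mathcal{H})$ (parity of the number of ``crossed'' matchings), verify it is independent of the chosen labeling by decomposing relabelings into single-vertex swaps, and then read off both directions from the identification canonical $\Leftrightarrow p=0$, twisted-canonical $\Leftrightarrow p=1$, using a concentration move (swap labels along one arc between two crossed edges to cancel them pairwise) to realize the minimal representative. The paper instead proves the two directions by different methods: for the forward direction it observes that under a twisted-canonical labeling the cross-edge graph $H'$ has a spanning path, while under a canonical labeling $H'$ splits into two components labeled by second coordinate, and these structures cannot coexist; for the converse it deletes one cycle edge, invokes Proposition~\ref{prop: treecl} to canonically label the resulting path cover, and then deduces the remaining matching must be the twist. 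Your parity invariant is essentially the same information as the paper's spanning-path observation (parity $1$ $\Leftrightarrow$ $H'$ is a single $2m$-cycle, parity $0$ $\Leftrightarrow$ two $m$-cycles), but you make it an explicit, labeling-independent $\mathbb{Z}/2$-valued quantity and derive both directions uniformly from it; your concentration argument is, in effect, a self-contained reproof of the relevant special case of Proposition~\ref{prop: treecl}. What the paper's approach buys is brevity and reuse of an already-established lemma; what yours buys is a unified invariant and independence from the tree lemma. One small completeness note: you state ``canonical exists iff $p(\mathcal{H})=0$'' but only spell out the concentration argument in the $p=1$ case; the $p=0$ direction needs the identical reduction to reach zero crossed edges, which is worth saying explicitly.
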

\begin{proof}
    Suppose $G$ is an $m$-cycle and the vertices of $G$ are ordered cyclically as $v_{1},\hdots,v_{m}$. Suppose $\mathcal{H} = (L,H)$. Let $H'$ be the spanning subgraph of $H$ such that the edge set of $H'$ only consists of the cross edges of $H$. Suppose $\mathcal{H}$ has a twisted-canonical labeling. Without loss of generality, suppose the vertices of $H$ are named so that $E_{H}(L(v_{m}),L(v_{1}))$ is the twist. Hence we know that for each $v_{i}v_{j}\in E(G) - \{v_{m}v_{1}\}$, $(v_{i},1)(v_{j},1)$ and $(v_{i},2)(v_{j},2)$ are edges in $H$. Additionally $(v_{m},1)(v_{1},2)$, $(v_{m},2)(v_{1},1) \in E(H)$. Notice that \\ $(v_{1},1)(v_{2},1), \hdots, (v_{m-1},1)(v_{m},1), (v_{m},1)(v_{1},2), (v_{1},2)(v_{2},2), \hdots, (v_{m-1},2)(v_{m},2)$ are edges of a spanning path in $H'$.  Now, for the sake of contradiction, suppose $\mathcal{H}$ has a canonical labeling. So we can rename the vertices of $H$ so that for each $u_{i}u_{j}\in E(G)$, $(u_{i},1)(u_{j},1)$ and $(u_{i},2)(u_{j},2)$ are edges in $H$ where $u_{i}$ and $v_{i}$ refer to the same vertex in $G$ for each $i \in [m]$. After naming the vertices of $H$ canonically, notice that all the edges in $H'$ are among the vertices having the same second coordinate. Hence $H'$ does not have a spanning path which is a contradiction.
    
Conversely, suppose $\mathcal{H}$ is full and has no canonical labeling. Let $uv \in E(G)$. The cover $(L, H - E_{H}(L(u),L(v)))$ of $G-uv$, has a canonical labeling by Proposition~\ref{prop: treecl}. Hence we can name the vertices of $H$ such that for each $xy\in E(G) - \{uv\}$, the edges $(x,1)(y,1)$ and $(x,2)(y,2)$ are in $E(H)$. Since we know that $\mathcal{H}$ does not have a canonical labeling, it must then be the case that $(u,1)(v,2)$ and $(u,2)(v,1)$ are in $E(H)$. Hence $\mathcal{H}$ has a twisted-canonical labeling.
\end{proof}
Using the notion of twisted-canonical labeling, we can characterize the bad 2-fold covers of even cycles. See~\cite{KO19} for a more general result about bad covers of non-DP-degree colorable graphs, which also implies the following result. Note that in~\cite{KO19}, a cover with twisted-canonical labeling is called a \emph{M\"{o}bius ladder}. 
\begin{lem} \label{lem: evenchar}
Suppose that $G$ is an even cycle and $\mathcal{H} = (L,H)$ is a cover of $G$ where $|L(v)| \geq 2$ for each $v \in V(G)$. Then $G$ does not admit an $\mathcal{H}$-coloring if and only if $\mathcal{H}$ is a $2$-fold cover with a twisted-canonical labeling.
\end{lem}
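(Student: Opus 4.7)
The plan is to mirror the structure of Lemma~\ref{lem: oddchar}, separating the two implications and leveraging the 2-fold vs. 3-fold dichotomy via a greedy argument, plus Lemma~\ref{lem: twofold} to translate the absence of a twisted-canonical labeling into a concrete structural condition (canonical labeling or not full).

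For the easy direction ($\Leftarrow$), suppose $G = C_{2m}$ with vertices ordered cyclically as $v_1, \ldots, v_{2m}$ and $\mathcal{H}$ is a $2$-fold cover whose twisted-canonical labeling has its twist on $v_{2m}v_1$. Any $\mathcal{H}$-coloring selects $(v_i, c_i)$ with $c_i \in [2]$. Along $v_1,\ldots, v_{2m}$, the canonically labeled matchings force $c_i \neq c_{i+1}$ for $1 \leq i \leq 2m-1$, so the $c_i$'s alternate and $c_{2m} \neq c_1$. But the twist matching $(v_{2m},l)(v_1, l') \in E(H)$ for $l \neq l'$ forces $c_{2m} = c_1$, a contradiction.

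For the harder direction ($\Rightarrow$), I will prove the contrapositive: if $\mathcal{H}$ is not a $2$-fold cover with a twisted-canonical labeling, then $G$ admits an $\mathcal{H}$-coloring. First, if some $|L(v)| \geq 3$, cyclically order the vertices so that $v$ is last; each intermediate vertex has at most one preceding neighbor and $v$ has exactly two, so a greedy selection works since $|L(v)| \geq 3$. Otherwise $\mathcal{H}$ is $2$-fold, and by Lemma~\ref{lem: twofold} the absence of a twisted-canonical labeling means $\mathcal{H}$ is not full or has a canonical labeling. In the canonical case, label $L(v_i) = \{(v_i,1),(v_i,2)\}$ canonically and take $I = \{(v_i, 1) : i \text{ odd}\}\cup\{(v_i,2) : i \text{ even}\}$; since the only cross edges are $(v_i,j)(v_{i+1},j)$, $I$ is independent and has size $2m$.

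The main obstacle, and the case requiring the most care, is when $\mathcal{H}$ is $2$-fold but not full. Here I will choose an edge $uv = v_{2m}v_1$ whose matching is not perfect, then extend $\mathcal{H}$ to a full cover $\mathcal{H}^* = (L, H^*)$ of the path $G - v_{2m}v_1$ by adding edges across every incomplete matching along the path (this preserves the cover axioms since each $|L(v_i)| = 2$ and $H \subseteq H^*$). By Proposition~\ref{prop: treecl}, $\mathcal{H}^*$ admits a canonical labeling, so I can name $L(v_i) = \{(v_i,1),(v_i,2)\}$ with $(v_i,j)(v_{i+1},j) \in E(H^*)$ for $j \in [2]$ and $1 \leq i \leq 2m-1$. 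Consider the two alternating candidates $I_1 = \{(v_i,1) : i \text{ odd}\}\cup \{(v_i,2) : i \text{ even}\}$ and $I_2 = \{(v_i,2) : i \text{ odd}\}\cup \{(v_i,1) : i \text{ even}\}$; both are independent in $H^*$ along the path, hence in $H$ along the path since $E(H) \subseteq E(H^*)$ there. The only possible bad edge for $I_1$ is $(v_1,1)(v_{2m},2)$, and for $I_2$ it is $(v_1,2)(v_{2m},1)$. If both belonged to $E(H)$, then $E_H(L(v_1),L(v_{2m}))$ would contain a perfect matching, contradicting the choice of $v_{2m}v_1$ as a non-full edge. Thus at least one of $I_1, I_2$ is an $\mathcal{H}$-coloring of $G$, completing the proof.
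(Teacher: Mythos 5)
Your proof is correct and follows the same overall strategy as the paper: handle the easy direction via the forced alternation, and prove the converse by contrapositive, splitting on whether $\mathcal{H}$ is 2-fold and invoking Lemma~\ref{lem: twofold}. The one place you genuinely diverge — to the proof's benefit — is the 2-fold-but-not-full subcase. The paper disposes of this with ``Clearly, we may assume that $\mathcal{H}$ is a full cover,'' which is not as clear as claimed: if $E_H(L(v_{2m}),L(v_1))$ consists of a single ``twisted'' edge relative to the canonical labeling of the path, then the unique full extension has a twisted-canonical labeling, so one cannot simply pass to a full extension and read off a coloring. Your argument sidesteps this by working with the path cover directly: you extend only the path to a full cover $\mathcal{H}^*$, pull a canonical labeling from Proposition~\ref{prop: treecl}, and then observe that the two alternating candidates $I_1,I_2$ are both independent on the path and that their only possible conflicts at $v_{2m}v_1$ are $(v_1,1)(v_{2m},2)$ and $(v_1,2)(v_{2m},1)$; since the matching there is not perfect, at most one of these is present, so at least one of $I_1,I_2$ is an $\mathcal{H}$-coloring. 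This is the right way to fill the gap, and it is exactly the kind of two-candidate argument used in the paper's proof of Lemma~\ref{lem: oddchar} for odd cycles, so it also unifies the two cases nicely.
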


\begin{proof}
   Let $G = C_{2m+2}$ for some $m\in\mathbb{N}$. Suppose the vertices of $G$ are ordered cyclically as $x_{1},x_{2},\hdots,x_{2m+2}$.  Suppose $\mathcal{H}$ is a $2$-fold cover with a twisted-canonical labeling. We will show that $G$ does not admit an $\mathcal{H}$-coloring. Without loss of generality, suppose the vertices of $H$ are named so that $E_{H}(L(x_{1}),\allowbreak L(x_{2m+2}))$ is the twist. Notice that this means $(x_{1},\allowbreak 1)(x_{2m+2},\allowbreak 2)$ and $(x_{1},\allowbreak 2)(x_{2m+2},\allowbreak 1)$ are edges in $H$.  For the sake of contradiction, suppose $H$ contains an independent set $I$ of size $2m+2$. Without loss of generality, suppose $(x_1,1) \in I$. This means $I = \{(x_i,1) : i\text{ is odd},1\leq i\leq 2m + 1\}\cup \{(x_i,2) : i\text{ is even},2\leq i\leq 2m+2\}$.  Since $(x_1,1)$ and $(x_{2m+2},2)$ are adjacent in $H$, this is a contradiction. Hence $G$ does not admit an $\mathcal{H}$-coloring.
  
We will now prove the contrapositive of the converse.  Suppose first that $\mathcal{H}$ is a 2-fold cover with no twisted-canonical labeling. Clearly, we may assume that $\mathcal{H}$ is a full cover. By Lemma~\ref{lem: twofold}, $\mathcal{H}$ has a canonical labeling. Thus $G$ admits an $\mathcal{H}$-coloring.
    
Now suppose $\mathcal{H}$ is not 2-fold, then we know that there must exist some $u\in V(G)$ such that $|L(u)|\geq 3$. Without loss of generality, suppose $x_{2m+2} = u$. Then we can construct an $\mathcal{H}$-coloring $I$ greedily as in the proof of Lemma~\ref{lem: oddchar}.
\end{proof}

\subsection{Sharpness of Theorem~\ref{thm: cartprodcompbipartite}}
     
     Assuming the same setup as in the definition of volatile coloring in Section~\ref{sec:volatile}, we first prove an upper bound on the number of volatile $\mathcal{H}_{X}$-colorings for each $M_{y_{q}}$ when $M = C_{2m+2} \square K_{1,t}$. This is essential for proving the sharpness result.

     \begin{lem} \label{lem: evenvolatileupper}
     Let $M = C_{2m+2}\square K_{1,t}$ and let $\mathcal{H}$ be a $3$-fold cover of $M$. Then for each $q \in [t]$, $M_{y_{q}}$ has at most $1$ volatile $\mathcal{H}_{X}$-coloring.
     \end{lem}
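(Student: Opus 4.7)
By Lemma~\ref{lem: evenchar} (which applies since the restricted lists always have size in $\{2,3\}$), an $\mathcal{H}_X$-coloring $I$ is volatile for $M_{y_q}$ if and only if the reduced cover $\mathcal{H}'_{y_q}$ is a full $2$-fold cover of $C_{2m+2}$ with a twisted-canonical labeling. Setting $n = 2m+2$ and letting $u_i \in L(v_i, y_q)$ be the unique vertex matched via $\mathcal{H}$ to the vertex of $I$ in $L(v_i, x_1)$ (this must exist for $\mathcal{H}'_{y_q}$ to be $2$-fold), the tuple $U = (u_1,\dots,u_n)$ determines $I$ uniquely, because the matching between $L(v_i, x_1)$ and $L(v_i, y_q)$ is a matching, so $u_i$ has at most one neighbor in $L(v_i, x_1)$. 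Thus it suffices to prove that at most one $U$ yields a full, twisted-canonically labeled reduction.

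To count such $U$'s I would introduce an auxiliary graph $G^*$ on $V(H_{y_q})$. Let $M_i$ denote the matching between $L(v_i, y_q)$ and $L(v_{i+1}, y_q)$ in $\mathcal{H}$ (indices cyclic). If some $M_i$ has fewer than two edges, no reduction is full and the lemma holds trivially; otherwise each $M_i$ has two or three edges, and I take $G^*$ to be the union of the real edges of the $M_i$'s together with, for each two-edge $M_i$, a \emph{phantom} edge joining the unmatched vertex of $L(v_i, y_q)$ to the unmatched vertex of $L(v_{i+1}, y_q)$. A routine degree check shows that $G^*$ is $2$-regular, and because every edge connects consecutive columns cyclically, $G^*$ decomposes into cycles of lengths divisible by $n$. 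Since $|V(G^*)|=3n$, the only possibilities are a single $3n$-cycle, a $2n$-cycle plus an $n$-cycle, or three $n$-cycles. A direct case-check then shows $U$ makes the reduction full if and only if $u_1 u_2\cdots u_n u_1$ is an $n$-cycle of $G^*$: a three-edge $M_i$ forces $u_i u_{i+1}$ to be a real edge of $M_i$, while a two-edge $M_i$ forces $u_i$ and $u_{i+1}$ to be its unmatched vertices and hence joined by the phantom edge.

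It remains to check which candidate $U$'s yield the twisted-canonical structure. In the $3n$ case there is no $n$-cycle, so no candidate. In the $2n+n$ case the $n$-cycle $A$ is the only candidate; when $U = A$ is valid (which forces every phantom to lie in $A$), the remaining $2n$-cycle $B$ contains only real edges and forms the reduction's cross-edge graph as a single $C_{2n}$ — the twisted-canonical structure, by Lemma~\ref{lem: evenchar} a bad cover. In the $n+n+n$ case any valid $U$ must contain every phantom, so the remaining two $n$-cycles are entirely real and produce the canonical labeling, which by Lemma~\ref{lem: evenchar} is not a bad cover. In every case at most one volatile $I$ exists. The main obstacle I anticipate is cleanly bookkeeping real versus phantom edges: phantoms are artificial and do not appear in the reduction's cross-edge graph, yet are essential to make $G^*$ be $2$-regular and enable the cycle-decomposition argument. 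The key resolving observation is that in any valid $U$ every phantom edge lies inside $U$ itself (since a two-edge $M_i$ forces $u_i$ and $u_{i+1}$ to be its unmatched vertices), so stripping the phantoms from the cycles complementary to $U$ recovers exactly the real cross-edges of the reduction.
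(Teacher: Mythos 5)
Your proof is correct but takes a genuinely different route from the paper's. The paper argues by contradiction from two distinct volatile colorings $I_1, I_2$ and splits on whether $I_1\cap I_2$ is empty: when nonempty it localizes to one edge $v_1v_2$ of the cycle and shows the two twisted-canonically labeled reductions force incompatible constraints on the single matching $E_{H}(L(v_1,y_r),L(v_2,y_r))$; when empty it inductively renames the vertices of $H_{y_r}$ around the cycle to exhibit a canonical labeling of $\mathcal{H}^1_{y_r}$, contradicting Lemma~\ref{lem: evenchar} via Lemma~\ref{lem: twofold}. You instead encode all of $H_{y_q}$ into a single $2$-regular auxiliary graph $G^*$ by adding phantom edges to the two-edge matchings and read the bound off the cycle decomposition (all cycle lengths are multiples of $n$, so the type is $3n$, $2n+n$, or $n+n+n$): a volatile $\mathcal{H}_X$-coloring injects into an $n$-cycle of $G^*$ that uses the phantom at every two-edge link, hence at most one candidate exists, and in the $n+n+n$ case the two complementary $n$-cycles are entirely real and yield a canonical (hence not bad) reduction. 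Your argument is more global and explains structurally why the answer is one rather than merely ruling out two; the paper's is more elementary and local and needs no auxiliary construction. One small imprecision worth fixing: the stated biconditional ``$U$ makes the reduction full iff $U$ is an $n$-cycle of $G^*$'' fails in one direction, since an $n$-cycle passing through a real edge of a two-edge $M_i$ does not give a full reduction; but the explicit characterization you then state (real edge at a three-edge link, phantom at a two-edge link) is the correct one, it is what your counting actually uses, and with that reading the argument is sound.
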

     
     \begin{proof}
         Suppose the vertices of the even cycle are ordered cyclically as $v_{1},\hdots,v_{2m+2}$. Let $X = \{x\}$. Suppose $\mathcal{H} = (L,H)$ where $L(u,v) = \{(u,v,i) : i \in [3]\}$ for each $(u,v) \in V(M)$.  Suppose for $r \in [t]$, $M_{y_{r}}$ has at least 2 distinct volatile $\mathcal{H}_{X}$-colorings. Let $I_{1}$ and $I_{2}$ denote two of these colorings. For each $j \in [2]$, let $D_{j} = \{u \in V(H_{y_{r}}) : N_{H}(u)\cap I_{j} = \emptyset\}$ and $H^{j}_{y_{r}} = H[D_{j}]$. For each $u \in V(M_{y_{r}})$, we let $L^{j}_{y_{r}}(u) = L_{y_{r}}(u)\cap D_{j}$. Let $\mathcal{H}^{j}_{y_{r}} = (L^{j}_{y_{r}}, H^{j}_{y_{r}})$.
    
    Suppose $I_{1} \cap I_{2} \neq \emptyset$. Since $I_{1}$ and $I_{2}$ are distinct, there exist adjacent vertices $v_{i_{0}}, v_{i_{1}}$ in the even cycle such that $L(v_{i_{0}},x) \cap I_{1} \neq L(v_{i_{0}},x) \cap I_{2}$ and $L(v_{i_{1}},x) \cap I_{1} = L(v_{i_{1}},x) \cap I_{2}$. Without loss of generality, suppose $i_{0} = 1$ and $i_{1} = 2$. Let $L(v_{1},x) \cap I_{1} = \{(v_{1},x,l_{1})\}$, $L(v_{1},x) \cap I_{2} = \{(v_{1},x,l_{2})\}$, and $(v_{2},x,k_{1})$ be the vertex in $L(v_{2},x) \cap I_{1}$ and $L(v_{2},x) \cap I_{2}$. Notice that $l_{1}, l_{2}, k_{1} \in [3]$.
    
    Since $I_{1}$ and $I_{2}$ are volatile for $M_{y_{r}}$, there is no $\mathcal{H}^{1}_{y_{r}}$-coloring and  no $\mathcal{H}^{2}_{y_{r}}$-coloring of $M_{y_{r}}$. By Lemma~\ref{lem: evenchar}, $\mathcal{H}^{1}_{y_{r}}$ and $\mathcal{H}^{2}_{y_{r}}$ are $2$-fold covers with twisted-canonical labelings. Since $\mathcal{H}^{1}_{y_{r}}$ and $\mathcal{H}^{2}_{y_{r}}$ are $2$-fold, each of $(v_1, x, l_{1}), (v_1, x, l_{2})$ and $(v_{2},x,k_{1})$ have exactly one neighbor in $H_{y_{r}}$. Moreover, these neighbors are pairwise distinct. Let $(v_{1},y_{r},l_{3}), (v_{1},y_{r},l_{4})$, and $(v_{2},y_{r},k_{2})$ be the neighbors of $(v_1,x,l_{1})$, $(v_1,x,l_{2})$ and $(v_2,x,k_{1})$ in $H_{y_{r}}$ respectively. Note that $L^{1}_{y_{r}}(v_1, y_{r}) = L(v_1,y_{r}) - \{(v_{1},y_{r},l_{3})\}$, $L^{2}_{y_{r}}(v_1, y_{r}) = L(v_1,y_{r}) - \{(v_{1},y_{r},l_{4})\}$ and $L^{1}_{y_{r}}(v_2, y_{r}) = L^{2}_{y_{r}}(v_2, y_{r}) = L(v_2,y_{r}) - \{(v_{2},y_{r},k_{2})\}$. Therefore, $|L^{1}_{y_{r}}(v_1, y_{r}) \cap L^{2}_{y_{r}}(v_1, y_{r})| = 1$. We can let $(v_1,y_{r},l_{5})$ be the third element of $L_{y_{r}}(v_1, y_{r})$ so that $L^{1}_{y_{r}}(v_1, y_{r}) = \{(v_1,y_{r},l_{4}),\allowbreak (v_1,y_{r},l_{5})\}$, $L^{2}_{y_{r}}(v_1, y_{r}) = \{(v_1,y_{r},l_{3}),\allowbreak (v_1,y_{r},l_{5})\}$. Similarly, we can let $(v_2,y_{r},k_{3})$ and $(v_2,\allowbreak y_{r},\allowbreak k_{4})$ be the remaining two elements of $L_{y_{r}}(v_2, y_{r})$ so that $L^{1}_{y_{r}}(v_2, y_{r}) = L^{2}_{y_{r}}(v_2, y_{r}) = \{(v_2,y_{r},k_{3}),\allowbreak (v_2,y_{r},k_{4})\}$. Note that $\{l_{3}, l_{4},l_{5}\} = \{k_{2}, k_{3},k_{4}\} = [3]$. 

Since $\mathcal{H}^{1}_{y_{r}}$ and $\mathcal{H}^{2}_{y_{r}}$ have twisted-canonical labelings, the matchings $E_{H^{1}_{y_{r}}}(L^{1}_{y_{r}}(v_{1},y_{r}),\allowbreak L^{1}_{y_{r}}(v_{2},y_{r}))$ and $E_{H^{2}_{y_{r}}}(L^{2}_{y_{r}}(v_{1},y_{r}),L^{2}_{y_{r}}(v_{2},y_{r}))$ are perfect. Hence \\ $N_{H_{y_{r}}}(\{(v_1,y_{r},l_{4}),\allowbreak (v_1,y_{r},l_{5})\})\allowbreak = \{(v_2,y_{r},k_{3}), (v_2,y_{r},k_{4})\}$ and $(v_1,y_{r},l_{3})$ must have exactly one neighbor in $H_{y_{r}}$. This means that $(v_1,y_{r},l_{3})(v_2,y_{r},k_{2}) \in E(H_{y_{r}})$. On the other hand, $N_{H_{y_{r}}}(\{(v_1,y_{r},l_{3}), (v_1,y_{r},l_{5})\}) = \{(v_2,y_{r},k_{3}), (v_2,y_{r},k_{4})\}$. This implies $N_{H_{y_{r}}}((v_1,y_{r},l_{3})) \in \{(v_2,y_{r},k_{3}), (v_2,y_{r},k_{4})\}$. This is a contradiction.

Now suppose $I_{1} \cap I_{2} = \emptyset$. Since $\mathcal{H}_{y_{r}}^{1}$ is a $2$-fold cover of a cycle with a twisted-canonical labeling, it can not have a canonical labeling by Lemma~\ref{lem: twofold}. To complete the proof, we will inductively rename the vertices in $H^{1}_{y_{r}}$ to demonstrate that $\mathcal{H}_{y_{r}}^{1}$ has a canonical labeling.

Clearly $L(v_{1},x) \cap I_{1} \neq L(v_{1},x) \cap I_{2}$ and $L(v_{2},x) \cap I_{1} \neq L(v_{2},x) \cap I_{2}$. Let $L(v_{1},x) \cap I_{1} = \{(v_{1},x,l_{1})\}$, $L(v_{1},x) \cap I_{2} = \{(v_{1},x,l_{2})\}$, $L(v_{2},x) \cap I_{1} = \{(v_{2},x,k_{1})\}$ and $L(v_{2},x) \cap I_{2} = \{(v_{2},x,k_{2})\}$. Notice that $l_{1}, l_{2}, k_{1}, k_{2} \in [3]$. Let $(v_{1},y_{r},l_{3}), (v_{1},y_{r},l_{4})$, $(v_{2},y_{r},k_{3})$ and $(v_{2},y_{r},k_{4})$ be the neighbors of $(v_1, x, l_{1}), (v_1, x, l_{2})$, $(v_{2},x,k_{1})$ and $(v_{2},x,k_{2})$ in $H_{y_{r}}$ respectively. Since $I_{1}$ and $I_{2}$ are volatile for $M_{y_{r}}$, by Lemma~\ref{lem: evenchar}, the covers $\mathcal{H}^{1}_{y_{r}}$ and $\mathcal{H}^{2}_{y_{r}}$ are $2$-fold with twisted-canonical labelings. Clearly $l_{3} \neq l_{4}$ and $k_{3} \neq k_{4}$. We can let $(v_{1},y_{r},l_{5})$ and $(v_{2},y_{r},k_{5})$ be the remaining elements of $L_{y_{r}}(v_{1}, y_{r})$ and $L_{y_{r}}(v_{2}, y_{r})$ respectively. Notice that $L^{1}_{y_{r}}(v_{1}, y_{r}) = \{(v_{1},y_{r},l_{4}), (v_{1},y_{r},l_{5})\}$, $L^{2}_{y_{r}}(v_{1}, y_{r}) = \{(v_{1},y_{r},l_{3}), (v_{1},y_{r},l_{5})\}$, $L^{1}_{y_{r}}(v_{2}, y_{r}) = \{(v_{2},y_{r},k_{4}), (v_{2},y_{r},k_{5})\}$ and $L^{2}_{y_{r}}(v_{2}, y_{r}) = \{(v_{2},y_{r},k_{3}), (v_{2},y_{r},k_{5})\}$. Moreover note that $\{l_{3},\allowbreak l_{4},\allowbreak l_{5}\} = \{k_{3}, k_{4},k_{5}\} = [3]$. Since $\mathcal{H}^{1}_{y_{r}}$ and $\mathcal{H}^{2}_{y_{r}}$ have twisted-canonical labelings, the matchings $E_{H^{1}_{y_{r}}}(L^{1}_{y_{r}}(v_{1},y_{r}),L^{1}_{y_{r}}(v_{2},y_{r}))$ and $E_{H^{2}_{y_{r}}}(L^{2}_{y_{r}}(v_{1},y_{r}),L^{2}_{y_{r}}(v_{2},y_{r}))$ are perfect. This means that $(v_{1},y_{r},l_{3})(v_{2},y_{r},k_{3})$ and $(v_{1},y_{r},l_{4})(v_{2},y_{r},k_{4})$ must be edges in $H_{y_{r}}$. This further implies that $(v_{1},y_{r},l_{5})(v_{2},y_{r},k_{5})$ also must be an edge in $H_{y_{r}}$. In $L_{y_{r}}^{1}(v_{1},y_{r})$ and $L_{y_{r}}^{1}(v_{2},y_{r})$, we now rename the vertices $(v_{1},y_{r},l_{4})$, $(v_{1},y_{r},l_{5})$, $(v_{2},y_{r},k_{4})$ and $(v_{2},y_{r},k_{5})$ as $((v_{1},y_{r}),1)$, $((v_{1},y_{r}),2)$, $((v_{2},y_{r}),1)$ and $((v_{2},y_{r}),2)$ respectively. Notice that $((v_{1},y_{r}),j)((v_{2},y_{r}),j) \in E(H^{1}_{y_{r}})$ for each $j \in [2]$.

Now proceeding inductively for each $i \in \{3, \hdots, 2m+2\}$, we will rename the vertices in $L^{1}_{y_{r}}(v_{i},y_{r})$ as $((v_{i},y_{r}),1)$ and $((v_{i},y_{r}),2)$ so that $((v_{i-1},y_{r}),1)((v_{i},y_{r}),1)$ and $((v_{i-1},y_{r}),2)((v_{i},\allowbreak y_{r}),\allowbreak 2)$ are edges in $H_{y_{r}}$. By our renaming procedure, we have \\ $L^{1}_{y_{r}}(v_{i-1}, y_{r}) = \{((v_{i-1},y_{r}),1), ((v_{i-1},y_{r}),2)\}$ and $L^{2}_{y_{r}}(v_{i-1},\allowbreak y_{r}) = \{(v_{i-1},y_{r},l), ((v_{i-1},y_{r}),2)\}$ where $l \in [3]$. Suppose $L^{1}_{y_{r}}(v_{i},\allowbreak x)\cap I_{1} = (v_{i},x,p_{1})$ and $L^{1}_{y_{r}}(v_{i},x)\cap I_{2} = (v_{i},x,p_{2})$. Let $(v_{i},y_{r},p_{3})$ and $(v_{i},y_{r},p_{4})$ be the neighbors of $(v_{i},x,p_{1})$ and $(v_{i},x,p_{2})$ respectively in $H_{y_{r}}$. Let $(v_{i},y_{r},p_{5})$ be the remaining element of $L_{y_{r}}(v_{i},y_{r})$. We rename the vertices $(v_{i},y_{r},p_{4})$ and $(v_{i},y_{r},p_{5})$ as $((v_{i},y_{r}),1)$ and $((v_{i},y_{r}),2)$. Clearly $((v_{i-1},y_{r}),1)((v_{i},y_{r}),1)$ and \\ $((v_{i-1},y_{r}),2)((v_{i},y_{r}),2)$ must be edges in $H_{y_{r}}$.  This completes the renaming of the vertices in $H_{y_{r}}$. 

We will now show that this renaming demonstrates that $\mathcal{H}^{1}_{y_{r}}$ has a canonical labeling.  By our inductive procedure, we only need to verify that $((v_{2m+2},\allowbreak y_{r}),\allowbreak 1)((v_{1},\allowbreak y_{r}),\allowbreak 1)$ and $((v_{2m+2},\allowbreak y_{r}),\allowbreak 2)((v_{1},\allowbreak y_{r}),\allowbreak 2)$ are edges in $H^{1}_{y_{r}}$. We have $L^{1}_{y_{r}}(v_{2m+2},\allowbreak y_{r}) = \{((v_{2m+2},\allowbreak y_{r}),\allowbreak 1),\allowbreak ((v_{2m+2},\allowbreak y_{r}),\allowbreak 2)\}$. We can let $(v_{2m+2},\allowbreak y_{r},\allowbreak p)$ be the remaining vertex of $L^{2}_{y_{r}}(v_{2m+2},\allowbreak y_{r})$ so that $L^{2}_{y_{r}}(v_{2m+2},\allowbreak y_{r}) = \{(v_{2m+2},\allowbreak y_{r},\allowbreak p), ((v_{2m+2},\allowbreak y_{r}),\allowbreak 2)\}$. Notice that $p \in [3]$. Recall that $L^{1}_{y_{r}}(v_{1},\allowbreak y_{r}) = \{((v_{1},\allowbreak y_{r}),\allowbreak 1), ((v_{1},\allowbreak y_{r}),\allowbreak 2)\}$  and $L^{2}_{y_{r}}(v_{1},\allowbreak y_{r}) = \{(v_{1},\allowbreak y_{r},\allowbreak l_{3}), ((v_{1},\allowbreak y_{r}),\allowbreak 2)\}$ where $l_{3} \in [3]$. By Lemma~\ref{lem: evenchar}, the matchings $E_{H^{1}_{y_{r}}}(L^{1}_{y_{r}}(v_{2m+2},\allowbreak y_{r}),\allowbreak L^{1}_{y_{r}}(v_{1},\allowbreak y_{r}))$ and $E_{H^{2}_{y_{r}}}(L^{2}_{y_{r}}(v_{2m+2},\allowbreak y_{r}),\allowbreak L^{2}_{y_{r}}(v_{1},\allowbreak y_{r}))$ are perfect. This implies that $(v_{2m+2},\allowbreak y_{r},\allowbreak p)(v_{1},\allowbreak y_{r},\allowbreak l_{3}), ((v_{2m+2},\allowbreak y_{r}),\allowbreak 1)((v_{1},\allowbreak y_{r}),\allowbreak 1)$ and $((v_{2m+2},\allowbreak y_{r}),\allowbreak 2)((v_{1},\allowbreak y_{r}),\allowbreak 2)$ are edges in $H_{y_{r}}$. Hence $\mathcal{H}_{y_{r}}^{1}$ has a canonical labeling.
\end{proof} 
     
    We are now ready to prove the sharpness of Theorem~\ref{thm: cartprodcompbipartite}. 
      
      \begin{pro} \label{thm: evensharp}
     $\chi_{DP}(C_{2m + 2}\square K_{1,t}) = 4$ if and only if $t \geq P_{DP}(C_{2m+2},3) = 2^{2m+2}-1$.
     \end{pro}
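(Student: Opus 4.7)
The plan is to handle the two directions of the biconditional separately. The ``if'' direction (that $t \geq 2^{2m+2}-1$ implies $\chi_{DP}(C_{2m+2}\square K_{1,t}) = 4$) is immediate: apply Theorem~\ref{thm: cartprodcompbipartite} with $G = C_{2m+2}$ and $k=1$, using $\chi_{DP}(C_{2m+2}) = 3$ and the value $P_{DP}(C_{2m+2},3) = 2^{2m+2}-1$ recalled in Section~\ref{DPCF}.

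For the ``only if'' direction, I would prove the contrapositive: if $t \leq 2^{2m+2} - 2$, then $\chi_{DP}(M) \leq 3$, where $M := C_{2m+2}\square K_{1,t}$. Let $\mathcal{H} = (L,H)$ be an arbitrary $3$-fold cover of $M$, and let $X = \{x\}$, $Y = \{y_1,\ldots,y_t\}$ be the partite sets of $K_{1,t}$. The subcover $\mathcal{H}_X$ is a $3$-fold cover of $M_X \cong C_{2m+2}$, so the number $c$ of $\mathcal{H}_X$-colorings of $M_X$ satisfies
\[
c \;\geq\; P_{DP}(C_{2m+2},3) \;=\; 2^{2m+2}-1.
\]
By Lemma~\ref{lem: evenvolatileupper}, each $M_{y_q}$ admits at most one volatile $\mathcal{H}_X$-coloring, so I may take $z = 1$ in Corollary~\ref{cor: hcoloring}. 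The assumption $t \leq 2^{2m+2}-2$ then gives $c \geq 2^{2m+2}-1 > t = zt$, so Corollary~\ref{cor: hcoloring} produces an $\mathcal{H}$-coloring of $M$. Since $\mathcal{H}$ was arbitrary, $\chi_{DP}(M) \leq 3 < 4$. Combined with the trivial containment $C_{2m+2} \subseteq M$, which gives $\chi_{DP}(M) \geq \chi_{DP}(C_{2m+2}) = 3$, we obtain $\chi_{DP}(M) = 3$, contradicting $\chi_{DP}(M) = 4$.

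There is essentially no obstacle left at this point: the real technical work was done in Lemma~\ref{lem: evenvolatileupper}, whose proof uses the twisted-canonical labeling characterization of bad covers of even cycles to force the bound $z \leq 1$. Given that lemma together with Lemma~\ref{lem: nohcoloring} and Corollary~\ref{cor: hcoloring}, the proposition is a direct synthesis: the number of $\mathcal{H}_X$-colorings is at least $P_{DP}(C_{2m+2},3)$, while the ``cost'' per copy $M_{y_q}$ is at most one volatile coloring, so the pigeonhole-type inequality $c > zt$ in Corollary~\ref{cor: hcoloring} holds precisely in the regime $t \leq P_{DP}(C_{2m+2},3) - 1$.
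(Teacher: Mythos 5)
Your proof is correct and follows essentially the same route as the paper's: the forward direction is an immediate application of Theorem~\ref{thm: cartprodcompbipartite}, and the converse combines the lower bound $c \geq P_{DP}(C_{2m+2},3)$ on the number of $\mathcal{H}_X$-colorings with the bound $z\leq 1$ from Lemma~\ref{lem: evenvolatileupper}, then invokes Corollary~\ref{cor: hcoloring}. The only cosmetic difference is your final remark establishing $\chi_{DP}(M)\geq 3$, which is unnecessary since $\chi_{DP}(M)\leq 3$ already contradicts $\chi_{DP}(M)=4$.
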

     
     \begin{proof}
         Let $C=C_{2m+2}$, and let $K$ be the complete bipartite graph with the bipartition $X = \{x\}$ and $Y = \{y_{q} : q \in [t]\}$. Let $M = C\square K$.  Suppose $t \geq P_{DP}(C,3)$. We have $\chi_{DP}(M) = 4$ by Theorem~\ref{thm: cartprodcompbipartite}.
     
     Conversely, suppose $t < P_{DP}(C,3)$. Let $\mathcal{H} = (L,H)$ be an arbitrary $3$-fold cover of $M$ with $P_{DP}(M_X,\mathcal{H}_X) = d$. Clearly $d \geq P_{DP}(C,3) > t$. By Lemma~\ref{lem: evenvolatileupper}, for each $q \in [t]$, $M[V(C) \times y_{q}]$ has at most $1$ volatile $\mathcal{H}_{X}$-coloring. By Corollary~\ref{cor: hcoloring}, $M$ admits an $\mathcal{H}$-coloring. Thus $\chi_{DP}(M) \leq 3$.
\end{proof}
     
     We do not know of any other sharpness examples for Theorem~\ref{thm: cartprodcompbipartite} which leads us to believe that Theorem~\ref{thm: cartprodcompbipartite} is not sharp very often. This motivates us to study Question~\ref{ques: 2} further.

     \section{Cartesian Product of a Cycle and Complete Bipartite Graph} \label{four}
     In this Section, we will completely answer Question~\ref{ques: 2} when $G$ is an odd cycle and $k=1$ by showing that $f(C_{2m+1},1) = {P_{DP}(C_{2m + 1},3)}/{3}$ as compared to $f(C_{2m+2},1) = {P_{DP}(C_{2m + 2},3)}$ as given by Proposition~\ref{thm: evensharp}. More generally, in this Section, we make progress towards Question~\ref{ques: 2} by improving Theorem~\ref{thm: cartprodcompbipartite} when $G$ is a cycle and $k \geq 2$. We construct random covers with a combination of random matchings defined using an equivalence relation on an appropriate set of colorings, and matchings defined using canonical and twisted-canonical labelings. We show that there exists an appropriate bad cover by counting the expected number of volatile colorings and applying Lemma~\ref{lem: nohcoloring}. 
     
     \subsection{Odd Cycles}
We now explore the Cartesian product of an odd cycle and a complete bipartite graph. We start by defining an equivalence relation on the set of proper colorings of an odd cycle, which is the final ingredient we need to set up the process of creating a bad cover.

Let $G = C_{2m+1}$ with vertices ordered cyclically as $v_1,\hdots,v_{2m+1}$ and let $\mathcal{C}$ denote the set of all proper $k$-colorings of $G$. We define the equivalence relation $\sim$ on $\mathcal{C}$ such that if $c,d\in \mathcal{C}$, then $c\sim d$ if there exists a $j\in\mathbb{Z}_k$ such that $(c(v_i) - d(v_i))\mod k = j$ for all $i\in [2m+1]$.  The following Lemma is now immediate.

\begin{lem} \label{lem: oddequiclass}
Each equivalence class $\mathcal{E}$ of $\sim$ as defined above is of size $k$. Furthermore, if $c,d\in\mathcal{E}$ such that $c\neq d$, then $c(v_i)\neq d(v_i)$ for all $i\in[2m+1]$.
\end{lem}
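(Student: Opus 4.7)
The plan is to verify that $\sim$ is an equivalence relation and then to identify each equivalence class explicitly with the orbit of a single coloring under the natural $\mathbb{Z}_k$-action by additive shifts on the color set.

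First I would check that $\sim$ is an equivalence relation: reflexivity holds by taking $j=0$, symmetry by replacing $j$ with $-j \pmod k$, and transitivity by summing the two shifts modulo $k$. These verifications are immediate from the definition and I would only mention them briefly. Next I would identify the color set $[k]$ with $\mathbb{Z}_k$ in the natural way and, for each proper coloring $c \in \mathcal{C}$ and each $j \in \mathbb{Z}_k$, define a coloring $c_j$ of $G$ by $c_j(v_i) = c(v_i) + j \pmod k$. Since $v_iv_{i+1} \in E(G)$ forces $c(v_i) \neq c(v_{i+1})$, adding the same constant $j$ to both sides preserves this inequality, so $c_j$ is again a proper $k$-coloring of $G$. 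By construction $c \sim c_j$ for every $j \in \mathbb{Z}_k$, and conversely if $c \sim d$ with associated shift $j$, then $d = c_j$. Therefore the equivalence class of $c$ is exactly $\{c_0, c_1, \dots, c_{k-1}\}$.

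To establish $|[c]|=k$, I would observe that these $k$ colorings are pairwise distinct: if $c_j = c_{j'}$, then evaluating at $v_1$ gives $j \equiv j' \pmod k$. Finally, for the second assertion of the lemma, suppose $c, d \in \mathcal{E}$ with $c \neq d$. Then $d = c_j$ for some $j \in \mathbb{Z}_k$ with $j \neq 0$, which means $d(v_i) - c(v_i) \equiv j \not\equiv 0 \pmod k$ for every $i \in [2m+1]$, so $c(v_i) \neq d(v_i)$ at every vertex, as required.

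No step here is a serious obstacle; the argument is essentially bookkeeping about the free $\mathbb{Z}_k$-action on $\mathcal{C}$ given by constant shifts, and the only mild point to keep track of is the identification of $[k]$ with $\mathbb{Z}_k$ so that modular addition makes sense on the color labels. This is precisely why the excerpt labels the lemma as immediate after the definition.
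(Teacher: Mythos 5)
Your proof is correct, and it is essentially the argument the paper has in mind: the paper states this lemma as ``immediate'' and gives no proof, but its proof of the analogous even-cycle result (Lemma~\ref{lem: evenequiclass}) uses exactly the same shift-orbit argument you describe. One very minor notational slip: with the paper's definition of $\sim$, if $c \sim d$ via shift $j$ then $d = c_{-j}$ (not $c_j$), but since $-j \in \mathbb{Z}_k$ this does not affect the conclusion that $[c] = \{c_0, \ldots, c_{k-1}\}$.
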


Note that Lemma~\ref{lem: oddequiclass} gives a partition of the set of all proper $k$-colorings of an odd cycle into sets of size $k$ which immediately gives a corresponding partition of proper $\mathcal{H}$-colorings of an odd cycle into sets of size $k$ where $\mathcal{H}$ is a $k$-fold cover of an odd cycle with a canonical labeling. 

We are now ready to prove Theorem~\ref{thm: oddrandom}. We construct random $(k+2)$-fold covers of $C_{2m+1} \square K_{k,t}$ in such a way that, using the notation in the definition of volatile coloring in Section~\ref{sec:volatile}, each $\mathcal{H}_{X}$-coloring is volatile for some $M_{y_{q}}$. We use a combination of two types of matchings: (i) random matchings between $L(u,x_{j})$ and $L(u,y_{q})$ defined using the equivalence relation $\sim$ where $u \in V(G), x_{j} \in X$ and $y_{q} \in Y$, and (ii) matchings between $L(u,z)$ and $L(v,z)$ defined using a canonical labeling where $uv \in E(G)$ and $z \in X \cup Y$. We partition the set of all $\mathcal{H}_{X}$-colorings into sets of size $(k+2)^k$, and we find the expected number of volatile $\mathcal{H}_{X}$-colorings in each part. If that expectation is larger than $(k+2)^k-1$, we can use Lemma~\ref{lem: nohcoloring} to show a bad $(k+2)$-fold cover must exist.

     \begin{customthm} {\bf \ref{thm: oddrandom}}
        Given $k \in \mathbb{N}$, let $c_k = \left \lceil{\frac{k\ln(k+2)}{\ln{2}+(k-1)\ln(k+2)-\ln(2(k+2)^{k-1}-(k+1)!)}} \right \rceil$ if $k \geq 2$ and $c_1=1$. Then,\\ $\chi_{DP}(C_{2m+1}\square \allowbreak K_{k,t}) = k+3$ whenever $t\geq c_k\left(\frac{P_{DP}(C_{2m+1},k + 2)}{k+2}\right)^k = c_k\left(\frac{(k+1)^{2m+1} - (k+1)}{k+2}\right)^k$.
    \end{customthm}
    \begin{proof}
     For simplicity of notation, we will refer to $c_k$ as $c$ in the proof.
     
     Let $C$ be the odd cycle with vertices ordered cyclically as $u_{1},\hdots,u_{2m+1}$, and let $K$ be the complete bipartite graph with bipartition $X = \{x_{j} : j \in [k]\}$ and $Y = \{y_{q} : q \in [t]\}$. Let $M = C\square K$ and $M_{X} = M[\{(u_{i},x_{j}) : i \in [2m+1], j \in [k]\}]$.  By Theorem~\ref{thm: cartprod}, we have $\chi_{DP}(M) \leq \chi_{DP}(C) + \col(K) - 1 = k+3$. It remains to show that $\chi_{DP}(M) > k+2$. We form a $(k+2)$-fold cover $\mathcal{H} = (L,H)$ of $M$ using a partially random process.

For each $v \in V(K)$, let $\mathcal{H}_{v} = (L_{v}, H_{v})$ be a $(k+2)$-fold cover of $M[V(C)\times\{v\}]$ with a canonical labeling. For each $v \in V(K)$, we let $L(u,v) = L_{v}(u,v)$ for every $u \in V(C)$ and create edges so that $H[\bigcup_{u \in V(C)} L(u,v)] = H_{v}$. For simplicity, we rename each vertex $((u,v),l)$ in $V(H)$ as $(u,v,l)$. Next we use a random process to add matchings (possibly empty) between $L(u_{i},x_{j})$ and $L(u_{i},y_{q})$ for each $i \in [2m+1], j \in [k]$ and $q \in [t]$ to complete the construction of $\mathcal{H}$.

Let $\mathcal{H}_{X} = (L_{X},H_{X})$ denote the cover of $M_{X}$ where $L_{X}(u,x_{j}) = L_{x_{j}}(u,x_{j})$ for every $(u,x_{j}) \in V(C) \times X$ and $H_{X} = \bigcup_{j=1}^{k} H_{x_{j}}$. Let $\mathcal{C}$ and $\mathcal{I}$ denote the collection of all proper $(k+2)$-colorings and the collection of all $\mathcal{H}_{X}$-colorings respectively of $M_{X}$. Note that since $\mathcal{H}_{X}$ is a $(k+2)$-fold cover of $M_{X}$ with a canonical labeling, there exists a bijection between $\mathcal{C}$ and $\mathcal{I}$. Note that one such bijection is, $f: \mathcal{C} \rightarrow \mathcal{I}$ where $f(c) = \{(u_{i},x_{j},c(u_{i},x_{j})) : i\in [2m+1], j \in [k]\}$. Moreover, $P_{DP}(M_{X},k+2) = P(M_{X},k+2)$ (see Theorem 11 in~\cite{KM20}). Hence $P_{DP}(M_{X},\mathcal{H}_{X}) = P_{DP}(M_{X},k+2)$. Let $d = P(C,k+2) = (k+1)^{2m+1} - (k+1)$ so that $P_{DP}(M_{X},\mathcal{H}_{X}) = d^k$.

For every $j \in [k]$, let $\mathcal{C}_{j}$ and $\mathcal{I}_{j}$ denote the collection of all proper $(k+2)$-colorings and all $\mathcal{H}_{x_{j}}$-colorings respectively of $M[V(C)\times \{x_{j}\}]$. For each $j \in [k]$, $\mathcal{C}_{j}$ is partitioned into equivalence classes of size $(k+2)$ by Lemma~\ref{lem: oddequiclass} under the equivalence relation $\sim$ as described in the definition of $\sim$. We let $b = d/(k+2)$ and arbitrarily name these equivalence classes $\mathcal{E}_{j,p}$ where $p \in \left[b\right ]$.

We arbitrarily name the elements of the set $[b]^k$ as $\boldsymbol{p}_{1}, \ldots, \boldsymbol{p}_{b^k}$. Let $a \in [b^k]$ and suppose $\boldsymbol{p}_{a} = (p_{1}, p_{2}, \ldots, p_{k})$. We define $S_{\boldsymbol{p}_{a}} = \left\{\bigcup_{j=1}^{k}I_{p_{j}} : I_{p_{j}} \in f(\mathcal{E}_{j,p_{j}}) \text{ for each } j \in [k]\right\}$. Note that the size of $S_{\boldsymbol{p}_{a}}$ is $(k+2)^k$. Clearly $\left \{S_{\boldsymbol{p}_{a}} : a \in \left[b^k\right] \right \}$ is a partition of $\mathcal{I}$. For each $a \in [b^k]$, we associate $c$ cycles $M[V(C) \times \{y_{c(a-1)+1}\}], \ldots, M[V(C) \times \{y_{c(a-1)+c}\}]$ to $S_{\boldsymbol{p}_{a}}$. Note that this can be done since $t \geq cb^k$. By Lemma~\ref{lem: nohcoloring}, if there exists a cover $\mathcal{H^*}$ of $M$ such that for each $a \in \left[b^k\right]$, every $s \in S_{\boldsymbol{p}_{a}}$ is volatile for at least one of $M[V(C) \times \{y_{1}\}],\ldots, M[V(C) \times \{y_{t}\}]$ then $M$ does not have an $\mathcal{H^*}$-coloring. Next we use a probabilistic argument and show that there exists a way to create matchings between $L(u_{i},x_{j})$ and $L(u_{i},y_{q})$ for each $i \in [2m+1], j \in [k]$ and $q \in \{c(a-1)+1,\ldots, c(a-1)+c\}$ so that each $s \in S_{\boldsymbol{p}_{a}}$ is volatile for at least one of $M[V(C) \times \{y_{c(a-1)+1}\}],\ldots, M[V(C) \times \{y_{c(a-1)+c}\}]$.

Let $a \in [b^{k}]$ and suppose $\boldsymbol{p}_{a} = (p_{1}, p_{2}, \ldots, p_{k})$. Let $j \in [k]$ and suppose $f(\mathcal{E}_{j,p_{j}}) = \{I_{1},\ldots,I_{k+2}\}$. Note that by Lemma~\ref{lem: oddequiclass}, for each $j \in [k], f(\mathcal{E}_{j,p_{j}})$ is a partition of the vertex set of $H_{x_{j}}$. For each $\ell \in [c]$ and $j \in [k]$, we pick a random bijection $\sigma_{j}$ between $f(\mathcal{E}_{j,p_{j}})$ and $[k+2]$. Then we draw an edge between the vertex in $I_{l} \cap L(u_{i},x_{j})$ and the vertex $(u_{i},y_{q_{c(a-1)+\ell}},\sigma_{j}(I_{l}))$ for each $i \in [2m+1]$ and $l \in [k+2]$. This completes the construction of $\mathcal{H}$. It is easy to verify that $\mathcal{H}$ is a cover.

Let $a \in [b^{k}]$ and $s \in S_{\boldsymbol{p}_{a}}$. Suppose $\boldsymbol{p}_{a} = (p_{1}, p_{2}, \ldots, p_{k})$ and $s = \bigcup_{j=1}^{k}J_{p_{j}}$ where $J_{p_{j}} \in f(\mathcal{E}_{j,p_{j}})$ for each $j \in [k]$. For each $j \in [k]$, suppose the random bijection chosen previously between $f(\mathcal{E}_{j,p_{j}})$ and $[k+2]$ is $\sigma_{j}$. Let $r \in \{c(a-1)+1,\ldots, c(a-1)+c\}$. Let $D_{r} = \{w \in V(H_{y_{r}}) : N_{H}(w)\cap s = \emptyset\}$ and $H'_{y_{r}} = H[D_{r}]$. For each $u \in V(C)$, we define $L'_{y_{r}}(u,y_{r}) = L_{y_{r}}(u,y_{r})\cap D_{r}$. Let $\mathcal{H}'_{y_{r}} = (L'_{y_{r}}, H'_{y_{r}})$. By Lemma~\ref{lem: oddchar}, $s$ is volatile for $M[V(C) \times \{y_{r}\}]$ if and only if $\mathcal{H}'_{y_{r}}$ is a $2$-fold cover of $M[V(C) \times \{y_{r}\}]$ with a canonical labeling which happens if and only if the cardinality of the set $\{\sigma_{j}(J_{p_{j}}): j \in [k]\}$ is $k$.

We now calculate the probability that $s$ is volatile for $M[V(C) \times \{y_{r}\}]$. Since number of possible bijections between $f(\mathcal{E}_{j,p_{j}})$ and $[k+2]$ for each $j \in [k]$ is $(k+2)!$, there are total $((k+2)!)^k$ ways to add matchings in the way described earlier corresponding to each $S_{\boldsymbol{p}_{a}}$. We count the number of matchings that correspond to $s$ being volatile for $M[{V(C) \times y_{r}}]$ as follows. For $j=1$, there are $(k+2)$ possible values for $\sigma_{1}(J_{p_{1}})$. Then there are $(k+1)!$ possible bijections between $f(\mathcal{E}_{1,p_{1}})-\{J_{p_{1}}\}$ and $[k+2]-\{\sigma_{1}(J_{p_{1}})\}$. Note that if $k=1$, the probability that $s$ is volatile for $M[V(C) \times \{y_{r}\}]$ is $1$. If $k \geq 2$, consider $j=2$. Clearly $\sigma_{2}(J_{p_{2}})$ must be different from $\sigma_{1}(J_{p_{1}})$. Thus there are $k+1$ possible values for $\sigma_{2}(J_{p_{2}})$ and $(k+1)!$ possible bijections between $f(\mathcal{E}_{2,p_{2}})-\{J_{p_{2}}\}$ and $[k+2]-\{\sigma_{2}(J_{p_{2}})\}$. Continuing in this fashion, once we get to $j=k$, there are $(k+2)-(k-1)=3$ possible values for $\sigma_{k}(J_{p_{k}})$ and $(k+1)!$ possible bijections between $f(\mathcal{E}_{k,p_{k}})-\{J_{p_{k}}\}$ and $[k+2]-\{\sigma_{k}(J_{p_{k}})\}$. Thus for $k\geq2$, the probability that $s$ is volatile for $M[V(C) \times \{y_{r}\}]$ is 
\begin{equation*}
        \frac{(k+2)(k+1)!(k+1)(k+1)!\cdots3(k+1)!}{((k+2)!)^k} = \frac{(k+2)!}{2(k+2)^k}.
    \end{equation*}
 
Suppose $E_{a,s}$ is the event that $s$ is volatile for at least one of $M[V(C) \times \{y_{c(a-1)+1}\}],\allowbreak \ldots,\allowbreak M[V(C) \times \{y_{c(a-1)+c}\}]$.
 Hence $\mathbb{P}[E_{a,s}] = 1-\left( 1- {(k+2)!}/{2(k+2)^k} \right)^c$. Let $X_{a,s}$ be the indicator random variable such that $X_{a,s} = 1$ when $E_{a,s}$ occurs and $X_{a,s} = 0$ otherwise. Let $X_{a} = \Sigma_{s \in S_{\boldsymbol{p}_{a}}} X_{a,s}$. By linearity of expectation we have,
 \begin{equation*}
        \mathbb{E}[X_{a}] = (k+2)^k \left [1-\left( 1- \frac{(k+2)!}{2(k+2)^k} \right)^c \right ].
    \end{equation*}
    
Thus if $c$ satisfies
\begin{equation*}
        (k+2)^k \left [1-\left( 1- \frac{(k+2)!}{2(k+2)^k} \right)^c \right ] > (k+2)^k-1,
\end{equation*} then there exists a $(k+2)$-fold cover, $\mathcal{H}^{*}$, of $M$ such that for each $a \in [b^k]$ each $s \in S_{\boldsymbol{p}_{a}}$ is volatile for at least one of $M[V(C) \times \{y_{c(a-1)+1}\}],\ldots, M[V(C) \times \{y_{c(a-1)+c}\}]$.  We can show by a straightforward simplification that when $k \geq 2$ the above inequality holds if and only if $c > {k\ln(k+2)}/{(\ln{2}+(k-1)\ln(k+2)-\ln(2(k+2)^{k-1}-(k+1)!))}$.  It is also easy to see that the above inequality holds when $c=1$ and $k=1$.  Finally, note that by Lemma~\ref{lem: nohcoloring}, $M$ does not admit an $\mathcal{H}^{*}$-coloring and $k+2 < \chi_{DP}(M)$.  
\end{proof}

    To show the sharpness of Theorem~\ref{thm: oddrandom}, we need a lemma which utilizes the notation given in the definition of volatile coloring in Section~\ref{sec:volatile}.  

\begin{lem} \label{lem: oddvolatileupper}
     Let $M = C_{2m+1}\square K_{1,t}$ and let $\mathcal{H}$ be a $3$-fold cover of $M$. Then for each $q \in [t]$, $M_{y_{q}}$ has at most $3$ volatile $\mathcal{H}_{X}$-colorings.
     \end{lem}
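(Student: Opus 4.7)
The plan is to translate volatility into a permutation-theoretic condition on $\mathcal{H}_{y_q}$ and bound the resulting count by three via a fixed-point argument on cyclic compositions of permutations of $[3]$.

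First, I would reduce to the case where $\mathcal{H}$ is perfect on the matchings relevant to $M_{y_q}$. Given any $\mathcal{H}$, extend each matching $E_{H}(L(v,x), L(v, y_{q}))$ and each $E_{H}(L(u, y_{q}), L(v, y_{q}))$ (for $uv \in E(C_{2m+1})$) to a perfect matching, keeping $H_{X}$ unchanged. The resulting cover $\tilde{\mathcal{H}}$ has the same $\mathcal{H}_{X}$-colorings, and its restricted cover $\mathcal{H}'_{y_{q}}$ can only remove more vertices than the original; hence every volatile coloring of $\mathcal{H}$ remains volatile for $\tilde{\mathcal{H}}$, and it suffices to prove the bound under this assumption.

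Next, identify $L(v,x)$ and $L(v, y_{q})$ with $[3]$, so every relevant matching is a permutation of $[3]$: let $\rho_{v} \in S_{3}$ describe $E_{H}(L(v,x), L(v, y_{q}))$ and $\pi_{uv} \in S_{3}$ describe $E_{H}(L(u, y_{q}), L(v, y_{q}))$. A volatile $\mathcal{H}_{X}$-coloring $I$ is encoded by $f\colon V(C_{2m+1}) \to [3]$ with $I \cap L(v,x) = \{(v,x,f(v))\}$, and removes $(v, y_{q}, g(v))$ with $g(v) := \rho_{v}(f(v))$ from $L(v, y_{q})$. Since each $\rho_{v}$ is a bijection, the map $I \mapsto g$ is injective, so it suffices to bound the number of valid $g$'s.

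By Lemma~\ref{lem: oddchar}, $I$ is volatile iff the restricted cover $\mathcal{H}'_{y_{q}}$ is $2$-fold with a canonical labeling. The $2$-fold property holds automatically here. Canonical labeling requires each restricted matching to be perfect on the remaining two-element lists, forcing $\pi_{uv}(g(u)) = g(v)$ for every cycle edge $uv$; iterating around $C_{2m+1}$, $g(v_{1})$ must be a fixed point of the cyclic product $\Pi := \pi_{v_{2m+1} v_{1}} \circ \cdots \circ \pi_{v_{1} v_{2}}$. Beyond this, canonical labeling forces the cyclic composition of the restricted two-element matchings to be the identity; this composition is precisely $\Pi$ restricted to $[3] \setminus \{g(v_{1})\}$, and combined with $\Pi$ already fixing $g(v_{1})$, forces $\Pi = \mathrm{id}$ as a permutation of $[3]$. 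Thus either $\Pi \neq \mathrm{id}$ and no valid $g$ exists, or $\Pi = \mathrm{id}$ and exactly three valid $g$'s arise (one constant function $g \equiv c$ for each $c \in [3]$), yielding at most three volatile $\mathcal{H}_{X}$-colorings. The main obstacle is the clean verification that canonical labeling of the restricted $2$-fold cover of the odd cycle forces $\Pi$ to be the identity on all of $[3]$; all other steps are routine once this equivalence is firmly in hand.
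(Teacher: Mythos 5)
Your argument is correct and takes a genuinely different route from the paper. The paper argues by contradiction: it supposes four volatile colorings exist, pigeonholes to find two that agree at some vertex $v_{i_0}$ yet differ at an adjacent vertex $v_{i_1}$, and then does a local case analysis of the matchings at $L(v_{i_0},y_r)$ and $L(v_{i_1},y_r)$ to reach an inconsistency with the matching condition. Your approach instead reduces (validly --- adding cross edges only shrinks $D_q$ and adds edges to $H'_{y_q}$, so volatility is preserved) to all relevant matchings being perfect, encodes them as permutations $\rho_v$, $\pi_{uv}$ of $[3]$, and translates ``$\mathcal{H}'_{y_q}$ is a $2$-fold cover with a canonical labeling'' into the pair of conditions $\pi_{uv}(g(u)) = g(v)$ for every edge and the cyclic composite $\Pi$ equal to the identity on $[3]$; the injectivity of $I \mapsto g$ then caps the count at the number of fixed-free propagations, which is $3$ when $\Pi = \mathrm{id}$ and $0$ otherwise. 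This is a clean global algebraic argument in place of the paper's local contradiction, and it makes transparent where the bound $3$ comes from. One cosmetic slip: when $\Pi = \mathrm{id}$ the three admissible $g$'s are generally not the constant functions $g \equiv c$; they are the three functions obtained by fixing $g(v_1) = c$ and propagating via $g(v_{i+1}) = \pi_{v_i v_{i+1}}(g(v_i))$. Also, ``exactly three'' should be ``at most three,'' since you have only established injectivity of $I \mapsto g$, not that every admissible $g$ arises from an actual $\mathcal{H}_X$-coloring; neither issue affects the upper bound you need.
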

     \begin{proof}
         Suppose the vertices of the odd cycle are ordered cyclically as $v_{1},\hdots,v_{2m+1}$. Let $X = \{x\}$. Let $\mathcal{H} = (L,H)$ where $L(u,v) = \{(u,v,i) : i \in [3]\}$ for each $(u,v) \in V(M)$. Let $\mathcal{I}$ be the set of all $\mathcal{H}_{X}$-colorings of $M_{X}$.
         
    Suppose for $r \in [t]$, $M_{y_{r}}$ has at least 4 distinct volatile $\mathcal{H}_{X}$-colorings. Let $J_{1}, J_{2}, J_{3}$ and $J_{4}$ denote $4$ of these colorings. By the pigeonhole principle, at least two of these colorings are non-disjoint. Let $I_{1}$ and $I_{2}$ denote two such colorings. Additionally since $I_{1}$ and $I_{2}$ are distinct, there exist $i_{0} \in [2m+1]$ and $i_{1} = (i_{0}\mod (2m+1)) + 1$ such that $L(v_{i_{0}},x) \cap I_{1} = L(v_{i_{0}},x) \cap I_{2} = \{(v_{i_{0}}, x, l_{0})\}$ and $L(v_{i_{1}},x) \cap I_{1} = \{(v_{i_{1}}, x, l_{1})\}$ and $L(v_{i_{1}},x) \cap I_{2} = \{(v_{i_{1}}, x, l_{2})\}$ where $l_{1} \neq l_{2}$ and $l_{0},l_{1},l_{2} \in [3]$. Without loss of generality, suppose $i_{0} = 1$ and suppose $i_{1} = 2$
    
   For each $j \in [2]$, let $D_{j} = \{u \in V(H_{y_{r}}) : N_{H}(u)\cap I_{j} = \emptyset\}$ and $H^{j}_{y_{r}} = H[D_{j}]$. For each $u \in V(M_{y_{r}})$, we define $L^{j}_{y_{r}}(u) = L_{y_{r}}(u)\cap D_{j}$. Let $\mathcal{H}^{j}_{y_{r}} = (L^{j}_{y_{r}}, H^{j}_{y_{r}})$.
    
Since $I_{1}$ and $I_{2}$ are volatile for $M_{y_{r}}$, there is no $\mathcal{H}^{1}_{y_{r}}$-coloring and  no $\mathcal{H}^{2}_{y_{r}}$-coloring of $M_{y_{r}}$. By Lemma~\ref{lem: oddchar}, $\mathcal{H}^{1}_{y_{r}}$ and $\mathcal{H}^{2}_{y_{r}}$ are $2$-fold covers with canonical labelings. Since $L(v_1,\allowbreak x) \cap I_{1} = L(v_1,\allowbreak x) \cap I_{2} = (v_1,\allowbreak x,\allowbreak l_{0})$ and $(v_1,\allowbreak x,\allowbreak l_{0})$ has a unique neighbor in $H_{y_{r}}$, we have $L^{1}_{y_{r}}(v_1,\allowbreak y_{r}) = L^{2}_{y_{r}}(v_1,\allowbreak y_{r}) = L(v_1,\allowbreak y_{r}) - N_{H}((v_1,\allowbreak x,\allowbreak l_{0}))$. Let $L(v_1,\allowbreak y_{r}) - N_{H}((v_1,\allowbreak x,\allowbreak l_{0})) = \{(v_1,\allowbreak y_{r},\allowbreak l_{3}),\allowbreak (v_1,\allowbreak y_{r},\allowbreak l_{4})\}$ where $l_{3},\allowbreak l_{4} \in [3]$. On the the other hand, since $l_{1} \neq l_{2}$,\allowbreak $(v_2,\allowbreak x,\allowbreak l_{1})$ and $(v_2,\allowbreak x,\allowbreak l_{2})$ have distinct neighbors in $H_{y_{r}}$. Therefore $|L^{1}_{y_{r}}(v_2,\allowbreak y_{r}) \cap L^{2}_{y_{r}}(v_2,\allowbreak y_{r})| = 1$. Let $(v_2,\allowbreak y_{r},\allowbreak l_{5}) \in L^{1}_{y_{r}}(v_2,\allowbreak y_{r}) - L^{2}_{y_{r}}(v_2,\allowbreak y_{r})$ and $(v_2,\allowbreak y_{r},\allowbreak l_{6}) \in L^{2}_{y_{r}}(v_2,\allowbreak y_{r}) - L^{1}_{y_{r}}(v_2,\allowbreak y_{r})$ and let $(v_2,\allowbreak y_{r},\allowbreak l_{7}) \in L^{1}_{y_{r}}(v_2,\allowbreak y_{r}) \cap L^{2}_{y_{r}}(v_2,\allowbreak y_{r})$ where $\{l_{5},\allowbreak l_{6},\allowbreak l_{7}\} = [3]$. To summarize, $L^{1}_{y_{r}}(v_1,\allowbreak y_{r}) = L^{2}_{y_{r}}(v_1,\allowbreak y_{r}) = \{(v_1,\allowbreak y_{r},\allowbreak l_{3}),\allowbreak (v_1,\allowbreak y_{r},\allowbreak l_{4})\}$, $L^{1}_{y_{r}}(v_2,\allowbreak y_{r}) = \{(v_2,\allowbreak y_{r},\allowbreak l_{5}),\allowbreak (v_2,\allowbreak y_{r},\allowbreak l_{7})\}$ and $L^{2}_{y_{r}}(v_2,\allowbreak y_{r}) = \{(v_2,\allowbreak y_{r},\allowbreak l_{6}),\allowbreak (v_2,\allowbreak y_{r},\allowbreak l_{7})\}$.

Since $\mathcal{H}^{1}_{y_{r}}$ has a canonical labeling, either $(v_1,\allowbreak y_{r},\allowbreak l_{3})(v_2,\allowbreak y_{r},\allowbreak l_{7}) \in E(H)$ or $(v_1,\allowbreak y_{r},\allowbreak l_{4})(v_2,\allowbreak y_{r},\allowbreak l_{7}) \in E(H)$. Without loss of generality, suppose $(v_1,\allowbreak y_{r},\allowbreak l_{3})(v_2,\allowbreak y_{r},\allowbreak l_{7}) \in E(H)$. This means $(v_1,\allowbreak y_{r},\allowbreak l_{4})(v_2,\allowbreak y_{r},\allowbreak l_{5}) \in E(H)$.

 Similarly since $\mathcal{H}^{2}_{y_{r}}$ has a canonical labeling, either $(v_1,\allowbreak y_{r},\allowbreak l_{3})(v_2,\allowbreak y_{r},\allowbreak l_{7}) \in E(H)$ or $(v_1,\allowbreak y_{r},\allowbreak l_{4})(v_2,\allowbreak y_{r},\allowbreak l_{7}) \in E(H)$. Since $E_{H}(L(v_1,\allowbreak y_{r}),\allowbreak L(v_2,\allowbreak y_{r}))$ is a matching and $(v_1,\allowbreak y_{r},\allowbreak l_{4})(v_2,\allowbreak y_{r},\allowbreak l_{5}) \in E(H)$, $(v_1,\allowbreak y_{r},\allowbreak l_{4})(v_2,\allowbreak y_{r},\allowbreak l_{7}) \notin E(H)$.  Hence we have $(v_1,\allowbreak y_{r},\allowbreak l_{3})(v_2,\allowbreak y_{r},\allowbreak l_{7}) \in E(H)$. Then $(v_1,\allowbreak y_{r},\allowbreak l_{4})(v_2,\allowbreak y_{r},\allowbreak l_{6}) \in E(H)$. Hence $E_{H}(L(v_1,\allowbreak y_{r}),\allowbreak L(v_2,\allowbreak y_{r}))$ is not a matching. Therefore $\mathcal{H}$ is not a cover and we have arrived at a contradiction.
     \end{proof}
     
     We are now ready to prove the sharpness result.
    
    \begin{pro} \label{cor: oddsharp}
         $\chi_{DP}(C_{2m + 1}\square K_{1,t}) = 4$ if and only if $t \geq \frac{P_{DP}(C_{2m + 1},3)}{3} = \frac{2^{2m+1}-2}{3}$.
     \end{pro}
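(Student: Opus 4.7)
The plan is to verify both directions using the machinery already in place. For the implication \emph{$t \geq (2^{2m+1}-2)/3 \Rightarrow \chi_{DP}(M) = 4$}, I would simply specialize Theorem~\ref{thm: oddrandom} to $k=1$. With $k=1$ the constant $c_k$ is defined to be $1$, and the formula becomes $t \geq ((k+1)^{2m+1}-(k+1))/(k+2) = (2^{2m+1}-2)/3$, which is exactly the hypothesis; hence $\chi_{DP}(M) \geq k+3 = 4$. The matching upper bound $\chi_{DP}(M) \leq 4$ follows from Theorem~\ref{thm: cartprod}, since $\chi_{DP}(C_{2m+1}) = 3$ and $\col(K_{1,t}) = 2$ give $\chi_{DP}(C_{2m+1}) + \col(K_{1,t}) - 1 = 4$.

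For the converse I would use a simple counting argument via Corollary~\ref{cor: hcoloring}. Let $\mathcal{H} = (L,H)$ be an arbitrary $3$-fold cover of $M = C_{2m+1} \square K_{1,t}$ and write $X = \{x\}$, $Y = \{y_1, \ldots, y_t\}$. Then $M_X = M[V(C_{2m+1}) \times \{x\}] \cong C_{2m+1}$, and $\mathcal{H}_X$ is a $3$-fold cover of this odd cycle. The number $c$ of $\mathcal{H}_X$-colorings of $M_X$ therefore satisfies
\[
c \;\geq\; P_{DP}(C_{2m+1},3) \;=\; 2^{2m+1}-2,
\]
using the fact recalled in Section~\ref{DPCF} that $P_{DP}(C_n,m) = P(C_n,m) = (m-1)^n + (-1)^n(m-1)$ for $n$ odd.

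Next I would invoke Lemma~\ref{lem: oddvolatileupper}, which guarantees that for every $q \in [t]$ the cycle $M_{y_q}$ has at most $z := 3$ volatile $\mathcal{H}_X$-colorings. Since $t < (2^{2m+1}-2)/3$ implies
\[
zt \;=\; 3t \;<\; 2^{2m+1} - 2 \;\leq\; c,
\]
Corollary~\ref{cor: hcoloring} gives that $M$ admits an $\mathcal{H}$-coloring. Because $\mathcal{H}$ was an arbitrary $3$-fold cover, this shows $\chi_{DP}(M) \leq 3$, so in particular $\chi_{DP}(M) \neq 4$, completing the proof.

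There is really no main obstacle here: all the nontrivial work is packaged into Theorem~\ref{thm: oddrandom} for the upper bound on $t$ and into Lemma~\ref{lem: oddvolatileupper} for the lower bound, and the only things to check are the arithmetic identities $P_{DP}(C_{2m+1},3) = 2^{2m+1}-2$ and the pointwise comparison $3t < P_{DP}(C_{2m+1},3)$ that triggers Corollary~\ref{cor: hcoloring}.
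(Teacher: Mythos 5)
Your proof is correct and follows essentially the same route as the paper: specialize Theorem~\ref{thm: oddrandom} with $c_1 = 1$ for the forward direction, and combine Lemma~\ref{lem: oddvolatileupper} with Corollary~\ref{cor: hcoloring} for the converse. The only cosmetic difference is that you separately re-derive the upper bound $\chi_{DP}(M) \leq 4$ via Theorem~\ref{thm: cartprod}, which is redundant since Theorem~\ref{thm: oddrandom} already asserts the equality $\chi_{DP}(C_{2m+1}\square K_{k,t}) = k+3$.
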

     \begin{proof}
     Let $C = C_{2m+1}$ and $K$ be the complete bipartite graph with bipartition $X = \{x\}$ and $Y = \{y_{q} : q \in [t]\}$. Let $M = C\square K$ and $M_{x} = M[\{(u_{i},x) : i \in [2m+1]\}]$. Let $d = P_{DP}(C,3)$. Suppose $t \geq d/3$. Applying Theorem~\ref{thm: oddrandom} with $c_1=1$ implies $\chi_{DP}(M) = 4$.
     
     Conversely, suppose $t < {P_{DP}(C_{2m + 1},3)}/{3}$ or equivalently $d > 3t$. Let $\mathcal{H} = (L,H)$ be an arbitrary $3$-fold cover of $M$. By Lemma~\ref{lem: oddvolatileupper}, for each $q \in [t]$, $M[V(C) \times \{y_{q}\}]$ has at most $3$ volatile $\mathcal{H}_{X}$-colorings. By Corollary~\ref{cor: hcoloring}, $M$ admits an $\mathcal{H}$-coloring. Hence we have, $\chi_{DP}(M) \leq 3$.
\end{proof}
   
   \subsection{Even Cycles}  
     We next present a result that is an improvement on Theorem~\ref{thm: cartprodcompbipartite} when $G$ is an even cycle. Similar to the odd cycle case in the previous Subsection, we use an equivalence relation which is the final ingredient we need to set up the process of creating a bad cover of the Cartesian product of an even cycle and a complete bipartite graph. We start with the notion of a cover, $\mathcal{H}$, for which $P_{DP}(C_{2m},\mathcal{H}) = P_{DP}(C_{2m},k)$ for $m \geq 2$ and $k \in \N$.

     For any $m \geq 2$, a \emph{$k$-fold $C_{2m}$-twister} is a $k$-fold cover, $\mathcal{H} = (L,H)$, of $G = C_{2m}$ such that it is possible to order the vertices of $G$ cyclically as $u_1, \ldots, u_{2m}$, let $L(u_{i}) = \{(u_{i},l) : l \in [k]\}$ for each $i \in [2m]$ so that $\left(\bigcup_{l \in [k],i \in [2m-1]} \{(u_{i},l)(u_{i+1},l) \} \right) \cup \left(\bigcup_{l\in[k]} \{(u_{2m},l)(u_{1},(l\mod k)+1)\right)$ is the set of cross edges of $H$. 
\begin{figure}[h]
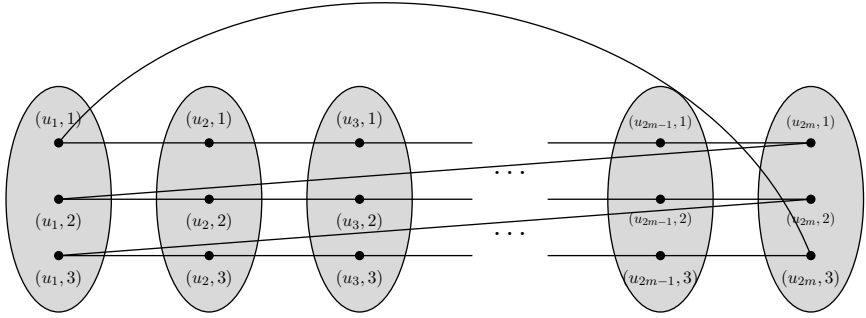

\centering
\begin{graph}
    \coordinate (v11) at (-2,-0.25);
        \coordinate (v12) at (-2,0.5);
        \coordinate (v13) at (-2,-1);
        \coordinate (v21) at (0,-0.25);
        \coordinate (v22) at (0,0.5);
        \coordinate (v23) at (0,-1);
        \coordinate (v31) at (2,-0.25);
        \coordinate (v32) at (2,0.5);
        \coordinate (v33) at (2,-1);
        \coordinate (dot1) at (4,0.1);
        \coordinate (dot2) at (4,-0.7);
        \coordinate (v41) at (6,-0.25);
        \coordinate (v42) at (6,0.5);
        \coordinate (v43) at (6,-1);
        \coordinate (v51) at (8,-0.25);
        \coordinate (v52) at (8,0.5);
        \coordinate (v53) at (8,-1);
        
        \draw[fill=gray!30] (-2,-0.25) ellipse (7mm and 15mm);
        \draw[fill=gray!30] (0,-0.25) ellipse (7mm and 15mm);
        \draw[fill=gray!30] (2,-0.25) ellipse (7mm and 15mm);
        
        \draw[fill=gray!30] (6,-0.25) ellipse (7mm and 15mm);
        \draw[fill=gray!30] (8,-0.25) ellipse (7mm and 15mm);
        
        \node at (dot1) {$\hdots$};
        \node at (dot2) {$\hdots$};
        \draw[fill=black] (v11) circle[radius=1.5pt] node[below=3pt,scale=0.6] {$(u_1,2)$};
        \draw[fill=black] (v12) circle[radius=1.5pt] node[above=3pt,scale=0.6] {$(u_1,1)$};
        \draw[fill=black] (v13) circle[radius=1.5pt] node[below=3pt,scale=0.6] {$(u_1,3)$};
        \draw[fill=black] (v21) circle[radius=1.5pt] node[below=3pt,scale=0.6] {$(u_2,2)$};
        \draw[fill=black] (v22) circle[radius=1.5pt] node[above=3pt,scale=0.6] {$(u_2,1)$};
        \draw[fill=black] (v23) circle[radius=1.5pt] node[below=3pt,scale=0.6] {$(u_2,3)$};
        \draw[fill=black] (v31) circle[radius=1.5pt] node[below=3pt,scale=0.6] {$(u_3,2)$};
        \draw[fill=black] (v32) circle[radius=1.5pt] node[above=3pt,scale=0.6] {$(u_3,1)$};
        \draw[fill=black] (v33) circle[radius=1.5pt] node[below=3pt,scale=0.6] {$(u_3,3)$};
        \draw[fill=black] (v42) circle[radius=1.5pt] node[above=3pt,scale=0.5] {$(u_{2m-1},1)$};
        \draw[fill=black] (v41) circle[radius=1.5pt] node[below=3pt,scale=0.5] {$(u_{2m-1},2)$};
        \draw[fill=black] (v43) circle[radius=1.5pt] node[below=3pt,scale=0.6] {$(u_{2m-1},3)$};
        \draw[fill=black] (v52) circle[radius=1.5pt] node[above=3pt,scale=0.5] {$(u_{2m},1)$};
        \draw[fill=black] (v51) circle[radius=1.5pt] node[below=3pt,scale=0.5] {$(u_{2m},2)$};
        \draw[fill=black] (v53) circle[radius=1.5pt] node[below=3pt,scale=0.6] {$(u_{2m},3)$};

        
        \draw (v11) -- (v21); \draw (v21) -- (v31);
        \draw (v12) -- (v22); \draw (v22) -- (v32);
        \draw (v13) -- (v23); \draw (v23) -- (v33);
        \draw (v32) -- (3.5,0.5);
        \draw (v31) -- (3.5,-0.25);
        \draw (v33) -- (3.5,-1);
        \draw (4.5,0.5) -- (v42);
        \draw (4.5,-0.25) -- (v41);
        \draw (4.5,-1) -- (v43);
        \draw (v41) -- (v51);
        \draw (v42) -- (v52);
        \draw (v43) -- (v53);
        \draw (v52) -- (v11);
        \draw (v51) -- (v13);
        \path[line width=0.5] (v53) edge[bend right=60] (v12);

\end{graph}
\caption{A $3$-fold $C_{2m}$-twister. Note that the set of vertices inside each oval is a clique; we have omitted drawing the corresponding edges for simplicity. }
\label{figure:twister}
\end{figure} 

By Lemma~23 in~\cite{KM20}, $P_{DP}(C_{2m},\mathcal{H}) = P_{DP}(C_{2m},k)$ when $\mathcal{H}$ is a $k$-fold $C_{2m}$-twister.

Let $G = C_{2m+2}$ with vertices ordered cyclically as $u_1, \ldots, u_{2m+2}$, and let $\mathcal{H} = (L,H)$ be a $k$-fold cover of $G$. Suppose for each $i \in [2m+2]$, $L(u_{i}) = \{(u_{i},l) : l \in [k]\}$. Let $\mathcal{I}$ denote the set of all $\mathcal{H}$-colorings of $G$. Suppose $I_{1},I_{2} \in \mathcal{I}$, and for each $i\in [2m+2]$, we let $c_{1,i}$ and $c_{2,i}$ be the second coordinate of the vertex in $I_{1} \cap L(u_{i})$ and the vertex in $I_{2} \cap L(u_{i})$ respectively. We define the relation $\approx$ on $\mathcal{I}$ such that $I_{1} \approx I_{2}$ if there exists a $j\in\mathbb{Z}_k$ such that $(c_{1,i}-c_{2,i})\mod k = j$ for all $i\in [2m+2]$. It is easy to see that $\approx$ is an equivalence relation.

\begin{lem} \label{lem: evenequiclass} Suppose $\mathcal{H}$ is a $k$-fold $C_{2m+2}$-twister of $G=C_{2m+2}$ where $k \geq 3$.  Using the notation of the definition of $\approx$ above, the following statements hold.

(i) Each equivalence class $\mathcal{E}$ of $\approx$ as defined above is of size $k$.

(ii) If $I_{1},I_{2}$ are in the same equivalence class of $\approx$ and $I_{1} \neq I_{2}$, then $c_{1,i} \neq c_{2,i}$ for all $i\in[2m+2]\}$.

\end{lem}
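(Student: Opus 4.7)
My approach is to exploit the $\mathbb{Z}_k$-symmetry built into the twister's cross-edge structure. The key tool will be the cyclic shift on the second coordinates, which I will show maps $\mathcal{H}$-colorings to $\mathcal{H}$-colorings and generates each equivalence class of $\approx$.

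First I would parametrize any $\mathcal{H}$-coloring $I$ by the function $c : [2m+2] \to [k]$ where $c(i)$ is the second coordinate of the unique vertex in $I \cap L(u_i)$. Translating the twister's cross-edge set, $I$ is a valid $\mathcal{H}$-coloring if and only if $c(i) \neq c(i+1)$ for every $i \in [2m+1]$ and $c(1) \neq (c(2m+2) \bmod k) + 1$, where the last inequality encodes the twist on the edge $u_{2m+2}u_1$. Second, I would establish the key shift-invariance: letting $\oplus$ denote cyclic addition on $[k]$ (so $\ell \oplus j = ((\ell - 1 + j) \bmod k) + 1$), for any $j \in \mathbb{Z}_k$ the coordinate-wise shift $c \oplus j$ satisfies both constraints, since $c(i) \oplus j = c(i+1) \oplus j$ iff $c(i) = c(i+1)$, and $c(1) \oplus j = (c(2m+2) \oplus j) \oplus 1$ iff $c(1) = c(2m+2) \oplus 1$, both by cancellation of $\oplus j$ from each side. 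The only step requiring care is this shift-invariance of the twisted constraint, but it is immediate once one writes it out, so there is no real obstacle.

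With shift-invariance in hand, part (i) follows directly: fixing $I \in \mathcal{E}$ with associated function $c$, the $k$ shifts $c \oplus 0, c \oplus 1, \ldots, c \oplus (k-1)$ are pairwise distinct valid $\mathcal{H}$-colorings, all lying in $\mathcal{E}$ by the definition of $\approx$, and conversely any $I' \in \mathcal{E}$ is by definition a shift of $I$, so $|\mathcal{E}| = k$. For part (ii), if $I_1 \neq I_2$ lie in the same class with witnessing shift $j$, then $j \not\equiv 0 \pmod k$ (otherwise $I_1 = I_2$), whence $c_{2,i} = c_{1,i} \oplus j \neq c_{1,i}$ for every $i \in [2m+2]$. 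The hypothesis $k \geq 3$ seems to be imposed only to exclude the degenerate $k = 2$ case, where a twister on $C_{2m+2}$ has no $\mathcal{H}$-coloring by Lemma~\ref{lem: evenchar} and the statement is vacuous.
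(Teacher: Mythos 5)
Your proof is correct and takes essentially the same approach as the paper: both arguments exploit the $\mathbb{Z}_k$-cyclic shift on second coordinates to generate each equivalence class and show it has size exactly $k$. Where the paper asserts somewhat tersely that shifting preserves the independent-set property "because $\mathcal{H}$ is a twister," you make this explicit by translating the twister's cross-edge structure into the two constraints $c(i)\neq c(i+1)$ and $c(1)\neq (c(2m+2)\bmod k)+1$ and verifying shift-invariance directly; you also spell out part (ii), which the paper calls obvious. These are expository differences, not a different route.
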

\begin{proof}
    We only prove Statement~(i) since Statement~(ii) is obvious.  Let $I\in \mathcal{E}$. For each $i\in [2m+2]$, we let $c_{i}$ be the second coordinate of the vertex in $I \cap L(u_{i})$. For each $l\in\mathbb{Z}_k, i\in [2m+2]$, we define $c_{l,i} = ((c_{i} - 1 + l) \mod k) + 1$. Then for each $l\in\mathbb{Z}_k$, we construct $I_{l}$ so that $I_{l} = \{(u_{i},c_{l,i}) : i \in [2m+2]\}$. Notice that $I_{0} = I$. Let $l'\in\mathbb{Z}_k$. Clearly $|I_{l'}\cap L(u_{i})| = 1$ for each $i \in [2m+2]$. We now show that $I_{l'}$ is an $\mathcal{H}$-coloring of $G$.  Suppose there exist two vertices in $I_{l'}$, $(u_{j},c_{l',j})$ and $(u_{j'},c_{l',j'})$ such that $(u_{j},c_{l',j})(u_{j'},c_{l',j'}) \in E(H)$. This implies that $(u_{j},c_{j}), (u_{j'},c_{j'}) \in I$. The fact that $\mathcal{H}$ is a twister of $G$ and our assumption $(u_{j},c_{l',j})(u_{j'},c_{l',j'}) \in E(H)$ imply that $(u_{j},c_{j})(u_{j'},c_{j'}) \in E(H)$. This is a contradiction. Thus $I_{l}$ is an $\mathcal{H}$-coloring of $G$  for each $l \in\mathbb{Z}_k$. Also, clearly $I_l\in\mathcal{E}$ for each $l\in\mathbb{Z}_k$. Therefore, $|\mathcal{E}|\geq k$.
    
    Now suppose $|\mathcal{E}| > k$. Let $J\in\mathcal{E}$ be an $\mathcal{H}$-coloring of $G$ that is not equal to $I_l$ for any $l\in\mathbb{Z}_k$. For each $i \in [2m+2]$, let $d_{i}$ be the second coordinate of the vertex in $J\cap L(u_{i})$. Then since $J\in\mathcal{E}$, there exists some $r\in\mathbb{Z}_k$ such that $(d_{i} - c_{i})\mod k = r$ for all $i\in[2m+2]$. Recall that $c_{i} = c_{0,i}$. Note that $(c_{r,i} - c_{i})\mod k = r$ for all $i\in[2m+2]$. Since $d_i$ and $c_{r,i}$ are both elements of $[k]$, $d_{i} = c_{r,i}$ for all $i\in[2m+2]$. Therefore $J = I_r$ and $|\mathcal{E}|\leq k$. This proves the Statement (i).
\end{proof}

We now prove Theorem~\ref{thm: evenrandom}.  The strategy of the proof is similar to that of Theorem~\ref{thm: oddrandom}.  We construct random $(k+2)$-fold covers of $C_{2m+2} \square K_{k,t}$ in such a way that, using the notation in the definition of volatile coloring in Section~\ref{sec:volatile}, each $\mathcal{H}_{X}$-coloring is volatile for some $M_{y_{q}}$. We begin so that $\mathcal{H}_{X}$ is built from $(k+2)$-fold $C_{2m+2}$-twisters. We then use a combination of two types of matchings: (i) random matchings between $L(u,x_{j})$ and $L(u,y_{q})$ defined using the equivalence relation $\approx$ where $u \in V(G), x_{j} \in X$, and $y_{q} \in Y$ and (ii) matchings between $L(u,y_{q})$ and $L(v,y_{q})$ defined using a twisted-canonical labeling where $uv \in E(G)$ and $y_{q} \in Y$. We partition the set of all $\mathcal{H}_{X}$-colorings into sets of size $(k+2)^k$, and we find the expected number of volatile $\mathcal{H}_{X}$-colorings in each part. If that expectation is larger than $(k+2)^k-1$, we can use Lemma~\ref{lem: nohcoloring} to show a bad $(k+2)$-fold cover must exist.

     \begin{customthm} {\bf \ref{thm: evenrandom}}
        Given $k \in \mathbb{N}$, let $c_k = \left \lceil{\frac{k\ln(k+2)}{k\ln(k+2)-\ln((k+2)^{k}-\floor{(k+2)/2}k!)}}\right \rceil$. Then $\chi_{DP}(C_{2m+2}\square \allowbreak K_{k,t})\allowbreak = \allowbreak k+3$ whenever $t\geq c_k\left(\frac{P_{DP}(C_{2m+2},k + 2)}{k+2}\right)^k = c_k\left(\frac{(k+1)^{2m+2}-1}{k+2}\right)^k$.
\end{customthm}
    
    \begin{proof}
     For simplicity of notation, we will refer to $c_k$ as $c$ in the proof.
     
     Let $C$ be the even cycle with vertices ordered cyclically as $u_{1},\hdots,u_{2m+2}$, and let $K$ be the complete bipartite graph with bipartition $X = \{x_{j} : j \in [k]\}$ and $Y = \{y_{q} : q \in [t]\}$. Let $M = C\square K$ and $M_{X} = M[\{(u_{i},x_{j}) : i \in [2m+2], j \in [k]\}]$.  By Theorem~\ref{thm: cartprod}, we have $\chi_{DP}(M) \leq \chi_{DP}(C) + \col(K) - 1 = k+3$. It remains to show that $\chi_{DP}(M) > k+2$. To show this, we form a $(k+2)$-fold cover $\mathcal{H} = (L,H)$ of $M$ using a partially random process.

For each $x \in X$, let $\mathcal{H}_{x} = (L_{x}, H_{x})$ be a $(k+2)$-fold $C_{2m+2}$-twister of $M[V(C)\times\{x\}]$. Clearly, $P_{DP}(M[V(C) \times \{x\}],\mathcal{H}_{x}) = P_{DP}(C,k+2)$. We then let $L(u,x) = L_{x}(u,x)$ for every $u \in V(C)$ and create edges in $H$ so that $H[\bigcup_{u \in V(C)} L(u,x)] = H_{x}$. Let $\mathcal{H}_{X} = (L_{X},H_{X})$ denote the cover of $M_{X}$ where $L_{X}(u,x) = L_{x}(u,x)$ for every $(u,x) \in V(C) \times X$ and $H_{X} = \bigcup_{x \in X} H_{x}$. For simplicity we name the vertices in $H$ so that for each $(u,v) \in V(M)$, $L(u,v) = \{(u,v,l) : l \in [k+2]\}$. Next we create edges so that for each $(u,v) \in V(C)\times Y$, $H[L(u,v)]$ is a clique on $k+2$ vertices. For each $j \in [t]$, we draw edges $(u_{i},y_{j},2l-1)(u_{i+1},y_{j},2l-1)$ and $(u_{i},y_{j},2l)(u_{i+1},y_{j},2l)$ for every $i \in [2m+1],l \in [\floor{(k+2)/2}]$. Next we draw edges $(u_{1},y_{j},2l-1)(u_{2m+2},y_{j},2l)$ and $(u_{1},y_{j},2l)(u_{2m+2},y_{j},2l-1)$ for each $j \in [t]$, $l \in [\floor{(k+2)/2}]$. Then, if $k+2$ is odd, we draw edges so that for each $j \in [t]$, $H[\{(u_{i},y_{j},k+2) : i \in [2m+2]\}]$ is a cycle with vertices ordered cyclically as $(u_{1},y_{j},k+2), \ldots, (u_{2m+2},y_{j},k+2)$.

Next we use a random process to add matchings(possibly empty) between $L(u_{i},x_{j})$ and $L(u_{i},y_{q})$ for each $i \in [2m+2], j \in [k]$ and $q \in [t]$ to complete the construction of $\mathcal{H}$.

 Let $\mathcal{I}$ denote the collection of all $\mathcal{H}_{X}$-colorings of $M_{X}$. Let $d = P_{DP}(C,k+2) = (k+1)^{2m+2}-1$ so that $P_{DP}(M_{X},\mathcal{H}_{X}) = d^k$. For every $j \in [k]$, let $\mathcal{I}_{j}$ denote the collection of all $\mathcal{H}_{x_{j}}$-colorings of $M[V(C)\times \{x_{j}\}]$. For each $j \in [k]$, $\mathcal{I}_{j}$ is partitioned into equivalence classes of size $k+2$ by Lemma~\ref{lem: evenequiclass} under the equivalence relation $\approx$ as described in the definition of $\approx$. We let $b = d/(k+2)$ and name these equivalence classes $\mathcal{E}_{j,p}$ where $p \in \left[b\right ]$. Notice that for each $j \in [t]$, $\{\mathcal{E}_{j,p} : p \in [b]\}$ is a partition of $\mathcal{I}_{j}$.
 
 We arbitrarily name the elements of the set $[b]^k$ as $\boldsymbol{p}_{1}, \ldots, \boldsymbol{p}_{b^k}$. Let $a \in [b^k]$ and suppose $\boldsymbol{p}_{a} = (p_{1}, p_{2}, \ldots, p_{k})$. We define $S_{\boldsymbol{p}_{a}} = \left\{\bigcup_{j=1}^{k}I_{p_{j}} : I_{p_{j}} \in \mathcal{E}_{j,p_{j}} \text{ for each } j \in [k]\right\}$. Note that the size of $S_{\boldsymbol{p}_{a}}$ is $(k+2)^k$. Clearly $\left \{S_{\boldsymbol{p}_{a}} : a \in \left[b^k\right] \right \}$ is a partition of $\mathcal{I}$. For each $a \in [b^k]$, we associate $c$ cycles $M[V(C) \times \{y_{c(a-1)+1}\}], \ldots, M[V(C) \times \{y_{c(a-1)+c}\}]$ to $S_{\boldsymbol{p}_{a}}$. Note that this can be done since $t \geq cb^k$. By Lemma~\ref{lem: nohcoloring}, if there exists a cover $\mathcal{H^*}$ of $M$ such that for each $a \in \left[b^k\right]$, every $s \in S_{\boldsymbol{p}_{a}}$ is volatile for at least one of $M[V(C) \times \{y_{1}\}],\ldots, M[V(C) \times \{y_{t}\}]$ then $M$ does not have an $\mathcal{H^*}$-coloring. Next we use a probabilistic argument and show that there exists a way to create matchings between $L(u_{i},x_{j})$ and $L(u_{i},y_{q})$ for each $i \in [2m+1], j \in [k]$ and $q \in \{c(a-1)+1,\ldots, c(a-1)+c\}$ so that each $s \in S_{\boldsymbol{p}_{a}}$ is volatile for at least one of $M[V(C) \times \{y_{c(a-1)+1}\}],\ldots, M[V(C) \times \{y_{c(a-1)+c}\}]$.

Let $a \in [b^{k}]$ and suppose $\boldsymbol{p}_{a} = (p_{1}, p_{2}, \ldots, p_{k})$. Let $j \in [k]$ and suppose $\mathcal{E}_{j,p_{j}} = \{I_{1},\ldots,I_{k+2}\}$. Note that by Lemma~\ref{lem: evenequiclass}, for each $j \in [k]$, $\mathcal{E}_{j,p_{j}}$ is a partition of the vertex set of $H_{x_{j}}$. For each $\ell \in [c]$ and $j \in [k]$, we pick a random bijection $\sigma_{j}$ between $\mathcal{E}_{j,p_{j}}$ and $[k+2]$. Then for each $l \in [k+2]$ and $i \in [2m+2]$, we draw an edge between the vertex in $I_{l} \cap L(u_{i},x_{j})$ and the vertex $(u_{i},y_{q_{c(a-1)+\ell}},\sigma_{j}(I_{l}))$. This completes the construction of $\mathcal{H}$. It is easy to verify that $\mathcal{H}$ is a cover.

Let $a \in [b^{k}]$ and $s \in S_{\boldsymbol{p}_{a}}$. Suppose $\boldsymbol{p}_{a} = (p_{1}, p_{2}, \ldots, p_{k})$ and $s = \bigcup_{j=1}^{k}J_{p_{j}}$ where $J_{p_{j}} \in \mathcal{E}_{j,p_{j}}$ for each $j \in [k]$. For each $j \in [k]$, suppose the random bijection chosen previously between $\mathcal{E}_{j,p_{j}}$ and $[k+2]$ is $\sigma_{j}$. Let $r \in \{c(a-1)+1,\ldots, c(a-1)+c\}$. Let $D_{r} = \{w \in V(H_{y_{r}}) : N_{H}(w)\cap s = \emptyset\}$ and $H'_{y_{r}} = H[D_{r}]$. For each $u \in V(C)$, we define $L'_{y_{r}}(u,y_{r}) = L(u,y_{r})\cap D_{r}$. Let $\mathcal{H}'_{y_{r}} = (L'_{y_{r}}, H'_{y_{r}})$. By Lemma~\ref{lem: evenchar}, $s$ is volatile for $M[V(C) \times \{y_{r}\}]$ if and only if $\mathcal{H}'_{y_{r}}$ is a $2$-fold cover of $M[V(C) \times \{y_{r}\}]$ with a twisted-canonical labeling. This happens if and only if the cardinality of the set $\{\sigma_{j}(J_{p_{j}}): j \in [k]\}$ is $k$ and there exists $l \in \left[\floor{(k+2)/2}\right]$ such that $H'_{y_{r}} = H[\bigcup_{i \in [2m+2],z \in [2]}(u_{i},y_{r},2l+z-2)]$ (i.e., $2l-1, 2l \notin \{\sigma_{j}(J_{p_{j}}): j \in [k]\}$).

 Since number of possible bijections between $f(\mathcal{E}_{j,p_{j}})$ and $[k+2]$ for each $j \in [k]$ is $(k+2)!$, there are total $((k+2)!)^k$ ways to add matchings in the way described earlier corresponding to each $S_{\boldsymbol{p}_{a}}$. We now calculate the probability that $s$ is volatile for $M[V(C) \times \{y_{r}\}]$. Since number of possible bijections between $\mathcal{E}_{j,p_{j}}$ and $[k+2]$ for each $j \in [k]$ is $(k+2)!$, there are total $((k+2)!)^k$ ways to add matchings in the way described earlier corresponding to each $S_{\boldsymbol{p}_{a}}$. We count the number of matchings that correspond to $s$ being volatile for $M[{V(C) \times y_{r}}]$ as follows. Recall that there are $\floor{(k+2)/2}$ possible values for $l$. Then for $j = 1$, there are $k$ possible values to choose from the set $[k+2]-\{2l-1,2l\}$ for $\sigma_{1}(J_{p_{1}})$ and $(k+1)!$ possible bijections between $\mathcal{E}_{1,p_{1}}$ and $[k+2]-\{\sigma_{1}(J_{p_{1}})\}$. If $k\geq2$ then for $j=2$, $\sigma_{2}(J_{p_{2}})$ must be different from $\sigma_{1}(J_{p_{1}}), 2l-1$ and $2l$. Thus there are $k-1$ possible values for $\sigma_{2}(J_{p_{2}})$ and $(k+1)!$ possible bijections between $\mathcal{E}_{2,p_{2}}-\{J_{p_{2}}\}$ and $[k+2]-\{\sigma_{2}(J_{p_{2}})\}$. Continuing in this fashion, once we get to $j=k$, there is $k-(k-1)=1$ possible value for $\sigma_{k}(J_{p_{k}})$ and $(k+1)!$ possible bijections between $\mathcal{E}_{k,p_{k}}-\{J_{p_{k}}\}$ and $[k+2]-\{\sigma_{k}(J_{p_{k}})\}$. Thus the probability that $s$ is volatile for $M[V(C) \times \{y_{r}\}]$ is 
  
  \begin{equation*}
        \frac{\floor{(k+2)/2}k(k+1)!(k-1)(k+1)!\cdots(k+1)!}{((k+2)!)^k} = \frac{\floor{(k+2)/2}k!}{(k+2)^k}.
    \end{equation*}
  
 Suppose $E_{a,s}$ is the event that $s$ is volatile for at least one of $M[V(C) \times \{y_{c(a-1)+1}\}],\allowbreak \ldots,\allowbreak M[V(C) \times \{y_{c(a-1)+c}\}]$.
 Hence $\mathbb{P}[E_{a,s}] = 1-\left( 1- {\floor{(k+2)/2}k!}/{(k+2)^k} \right)^c$. Let $X_{a,s}$ be the indicator random variable such that $X_{a,s} = 1$ when $E_{a,s}$ occurs and $X_{a,s} = 0$ otherwise. Let $X_{a} = \Sigma_{s \in S_{\boldsymbol{p}_{a}}} X_{a,s}$. By linearity of expectation we have,
 \begin{equation*}
        \mathbb{E}[X_{a}] = (k+2)^k \left [1-\left( 1- \frac{\floor{(k+2)/2}k!}{(k+2)^k} \right)^c \right ].
    \end{equation*}
    
Thus if $c$ satisfies 
\begin{equation*}
        (k+2)^k \left [1-\left( 1- \frac{\floor{(k+2)/2}k!}{(k+2)^k} \right)^c \right ] > (k+2)^k-1,
\end{equation*} then there exists a $(k+2)$-fold cover, $\mathcal{H}^{*}$, of $M$ such that for each $a \in [b^{k}]$ each $s \in S_{\boldsymbol{p}_{a}}$ is volatile for at least one of $M[V(C) \times \{y_{c(a-1)+1}\}],\ldots, M[V(C) \times \{y_{c(a-1)+c}\}]$.  We can show by a straightforward simplification that the above inequality holds if and only if $c > {k\ln(k+2)}/{(k\ln(k+2)-\ln((k+2)^{k}-\floor{(k+2)/2}k!))}$. Finally, note that by Lemma~\ref{lem: nohcoloring}, $M$ does not admit an $\mathcal{H}^{*}$-coloring and $k+2 < \chi_{DP}(M)$.
\end{proof}
     
Note that the sharpness of Theorem~\ref{thm: evenrandom} follows from Proposition~\ref{thm: evensharp}.

\end{document}